\numberwithin{equation}{section}
\numberwithin{figure}{section}
\newtheorem{thm}{Theorem}[section]
\newaliascnt{lemma}{thm}
\newtheorem{lemma}[lemma]{Lemma}
\Crefname{lemma}{Lemma}{Lemmas}
\newaliascnt{prop}{thm}
\newtheorem{prop}[prop]{Proposition}
\Crefname{prop}{Proposition}{Propositions}
\newaliascnt{cor}{thm}
\newtheorem{cor}[cor]{Corollary}
\Crefname{cor}{Corollary}{Corollaries}
\newaliascnt{conjecture}{thm}
\newtheorem{conjecture}[conjecture]{Conjecture}
\Crefname{conjecture}{Conjecture}{Conjectures}
\theoremstyle{definition}
\newaliascnt{remark}{thm}
\newtheorem{remark}[remark]{Remark}
\Crefname{remark}{Remark}{Remarks}
\newaliascnt{example}{thm}
\newtheorem{example}[example]{Example}
\Crefname{example}{Example}{Examples}
\newaliascnt{definition}{thm}
\newtheorem{definition}[definition]{Definition}
\Crefname{definition}{Definition}{Definitions}
\newaliascnt{notation}{thm}
\Crefname{notation}{Notation}{Notations}
\newcommand{\RR}{\mathbb{R}}
\newcommand{\C}{\mathbb{C}}
\newcommand{\ZZ}{\mathbb{Z}}
\newcommand{\Si}{\Sigma}
\newcommand{\ang}{\mathrm{ang}}
\newcommand{\calC}{\mathcal{C}}
\newcommand{\calI}{\mathcal{I}}
\newcommand{\calL}{\mathcal{L}}
\newcommand{\calS}{\mathcal{S}}
\newcommand{\Mod}{\mathrm{Mod}}
\newcommand{\Var}{\mathrm{Var}}
\newcommand{\tat}{t\^ete-\`a-t\^ete }
\title{Vanishing arcs for isolated plane curve singularities}
\author[Bae, Cho, Choa, Jeong, and Portilla Cuadrado]{Hanwool Bae, Cheol-Hyun 
Cho, Dongwook Choa, Wonbo Jeong, and Pablo Portilla Cuadrado}
\thanks{The first and second authors are supported by the National Research Foundation of Korea(NRF) grant funded by the Korea government(MSIT) (No.2020R1A5A1016126)
and the first author by NRF  0450-20250061. }
\thanks{The third author is supported by Chungbuk National University NUDP program (2025)}
\thanks{The fourth author is supported by G-LAMP program of NRF grant funded by the Ministry of Education (No. RS-2024-00441954)}
\thanks{The last author is supported by RYC2022-035158-I, funded by 
MCIN/AEI/10.13039/501100011033 and by the FSE+}
\let\c@equation\c@thm
\let\c@figure\c@thm
\begin{document}

\begin{abstract}
The variation operator associated with an isolated hypersurface singularity is 
a classical topological invariant that relates relative and absolute homologies 
of the Milnor fiber via a non trivial isomorphism. Here we work with a 
topological version of this operator that deals with proper arcs and closed 
curves instead of homology cycles. Building on the classical 
framework of 
geometric vanishing cycles, we introduce the concept of vanishing arcsets as 
their counterpart using this geometric variation operator. We characterize 
which properly embedded arcs are sent to geometric vanishing cycles by the 
geometric variation operator in terms of intersections numbers of the arcs and 
their images by the geometric monodromy. Furthermore, we prove that for any 
distinguished 
collection of vanishing cycles arising from an A'Campo’s divide, 
there exists a  topological exceptional collection of arcsets 
whose variation images match this collection.
\end{abstract}
\maketitle

\tableofcontents

\section{Introduction}

This work introduces the notion of {\em vanishing arcs}, a new perspective on 
relative homology classes associated with isolated plane curve singularities. 
Building on the classical framework of vanishing cycles, we define vanishing 
arcs as their counterpart in relative homology via the variation operator.

Let $f:\C^2 \to \C$ be a representative of a germ with an isolated critical 
point at the origin, with Milnor fiber $\Sigma_f$. The variation operator, 
$V_f:H_1(\Sigma_f,\partial \Sigma_f;\ZZ) \to H_1(\Sigma_f;\ZZ)$, makes use of 
the fact that the 
geometric monodromy of $f$ fixes the boundary pointwise, in order to  relate 
relative cycles to their 
absolute 
counterparts. Historically, the operator has been extensively studied in the 
context 
of Picard--Lefschetz theory \cite{AGZVII}, and it is a classical result that in 
the case of isolated 
hypersurface singularities, it is a linear isomorphism. The starting motivation 
of this work is to understand the inverse of the variation operator, or rather, 
the inverse of a {\em geometric} version of it: a variation operator that takes 
properly 
embedded arcs to closed curves in the Milnor fiber.

We find that not all closed curves are in the image of a geometric variation 
operator
of a single properly embedded arc.
For example, we show that separating simple closed curves cannot be in the 
image.
To remedy this, we consider a finite disjoint collection of arcs, called an 
arcset.

A particularly interesting set contained in the collection of simple closed 
curves in $F$, is the set of vanishing cycles associated to $f$, that is, the 
curves that get contracted to a point in some nodal degeneration of $F$ in the 
versal unfolding space of $f$. In this work, we characterize which arcs are 
sent to vanishing cycles by the variation operator. The first main result of 
this work (\Cref{thm:single_arcs}) deals with the case of single arcs 
and gives a characterization purely in terms of intersection numbers of the arc 
and its image by the geometric monodromy: the image of a properly embedded arc 
by the geometric variation operator is a geometric vanishing cycle if and only 
if the arc and its image by the geometric monodromy  can be made disjoint in 
the interior of the Milnor fiber. The analogous result for the case of an 
arcset (\Cref{thm:vanishing_arcs_collection}) says that there are no 
obstruction 
for the image  to be a vanishing arc 
as long as the image is a simple closed curve.
 
 Now, we can ask a family version of this question. Namely, given  a 
 distinguished collection of vanishing cycles
 associated to a Morsification of $f$ with a choice of vanishing paths, we may 
 ask if each
 vanishing cycle is in the image of the geometric variation operator applied to 
 an arcset, and if
 there exists a collection of such arcsets with good properties.

We define a topological exceptional collection of vanishing arcsets. Like an 
exceptional collection in algebraic geometry, arcsets are
ordered and geometric monodromy image of the bigger arcsets  do not intersect 
the smaller arcsets.

For a totally real plane curve singularity $f$, A'Campo introduced the notion 
of a divide as a combinatorial tool where
the topology of the Milnor fiber and a distinguished collection of vanishing 
cycles can be 
read off.
In the second main result of the present work (\Cref{thm:maina}), we show that  
we can always find  topological exceptional collection of vanishing arcsets 
whose 
geometric
variation images are isotopic to the distinguished collection of vanishing 
cycles of A'Campo for any divide.

\subsection*{Vanishing arcs in symplectic geometry}

We comment on how our work relates to constructions in symplectic geometry.
The Milnor fiber of an isolated hypersurface singularity $f$ is known
to be a symplectic manifold.  Monodromy and vanishing cycles can be chosen to be an exact
symplectomorphism and  Lagrangian submanifolds respectively.

By selecting a Morsification of $f$ and a set of vanishing paths, the
directed Fukaya-Seidel category of an isolated singularity is defined from the
Lagrangian intersection theory of the distinguished collection of vanishing cycles. It was shown that
its derived category (or the $A_\infty$-triangulated envelope) is
independent of the choices involved.

The first four authors have recently constructed a categorical
analogue of variation operator in symplectic geometry. 
Namely, for any non-compact exact Lagrangian in the Milnor fiber, its
geometric variation image can be realized as compact exact
Lagrangian.

Furthermore, this information can be used to define a monodromy Fukaya
category of  an isolated hypersurface singularity $f$ (see \cite{BCCJ23}, \cite{BCCJ26}, see also \cite{CCJ20}).
One advantage of this construction is that it does not depend on the Morsification or the choice of vanishing paths.

It is conjectured that this monodromy Fukaya category is isomorphic to
the Fukaya-Seidel category for the case of two variables and that
the latter is embedded in the former in general. In this conjectural relation, 
distinguished collections of vanishing
cycles are expected to correspond to the exceptional collections.

A collection of an $A_{\infty}$ (or dg)-category is called {\em
exceptional} if self hom space is generated by identity and there
exist  no morphism from bigger to smaller indexed objects. We remark
that both collections admit braid group action.

By taking the Euler-characteristic of the conditions for an
exceptional collection of a $A_{\infty}$ (or dg)-category,  derives
the conditions of topological exceptional collections in this paper.
This paper suggests a refinement of the above conjectural relation. Namely, the exceptional
collection should consist of vanishing arcsets.

\subsection*{Organization of the paper}

This work is organized as follows. Section~\ref{sec:motivation} sets the stage 
by defining the variation operator and discussing its relevance in singularity 
theory. We also introduce the Seifert form which is later used to verify that 
the image of a single arc by the variation operator is a non-separating curve. 
Section~\ref{sec:winding_numbers} extends the theory of 
winding 
numbers to piecewise $C^1$ curves and arcs, laying the technical groundwork for 
proving the main results, we finish this sections with some interesting 
examples showing the existance of simple closed curves with vanishing winding 
number that are separating and thus, can't be vanishing cycles. In 
Section~\ref{sec:vanishing_cycles_arcs} explores the classical theory 
of vanishing cycles, introducing the concept of vanishing arcs as their 
relative counterparts.  
Section~\ref{sec:characterizing_arcs}, we 
characterize vanishing arcs using intersection numbers and geometric 
properties. 
Finally, Section~\ref{sec:finding_arcs} presents 
methods for 
constructing examples of vanishing arcs in the context of 
Brieskorn-Pham singularities.
In Section~\ref{sec:exceptcoll}, we introduce the notion of linear arcset and 
an exceptional collection of arcsets.
In Section~\ref{sec:divide}, we recall A'Campo's divide and its depth.
In Section~\ref{sec:depthzero}, we recall the notion of an adapted family which 
is useful to find the inverse image of the topological variation operator.
We find the exceptional collection of arcsets for depth zero cases.
In Section~\ref{sec:basic}, we define basic arcs corresponding to edges of 
A'Campo--Gusein-Zade diagram $A\Gamma(\mathbb{D}_{f})$.
In Section~\ref{sec:arcsetgeneral}, we define an arcset by collecting basic 
arcs along a good path in the diagram $A\Gamma(\mathbb{D}_{f})$.
We show that the chosen arcsets form topological exceptional collection of 
arcsets for a divide.

\section{Motivation} \label{sec:motivation}
Let $f: \C^2 \to \C$ be a complex analytic map that defines an isolated plane 
curve singularity at the origin. For $\epsilon>0$ small enough and $\delta>0$ 
sufficiently small with respect to $\epsilon$, the restriction of $f$
\[
f^{-1}(\partial \mathbb{D}_{\delta}) \cap \mathbb{B}_\epsilon \to \partial 
\mathbb{D}_\delta
\]
is a locally trivial fibration known as the {\em Milnor fibration}. We denote 
by $\Si_f$ one of its fibers and call it the {\em Milnor fiber}. The
characteristic mapping class of the Milnor fibration is called the {\em 
	geometric monodromy}. We denote this mapping class or a diffeomorphism 
representing it, by
\[
\varphi_f: \Si_f \to \Si_f.
\]
The hypothesis on $f$ defining an {\em isolated} plane curve singularity 
implies that $\varphi_f$ can be taken to be the identity on $\partial \Si_f$. 
In different words, (the class of) $\varphi_f$ is a well defined element of the 
relative mapping class 
group $\Mod(\Si_f)$ of diffeomorphisms of $\Si_f$ that fix the boundary 
pointwise up to isotopy preserving the action on the boundary.  Let 
$\left(\varphi_f\right)_\ast:H_1(\Si_f, \partial\Si_f; \ZZ)\to H_1(\Si_f, 
\partial\Si_f; \ZZ)$ be the map induced on relative homology by the geometric 
monodromy. We recall the definition of a classical operator.
\begin{definition}
	We define the {\em variation operator} $V_f$ associated with the isolated 
	plane 
	curve singularity $f$ by
	\begin{align*}
	V_f:H_1(\Si_f, \partial\Si_f; \ZZ) &\to H_1(\Si;\ZZ) \\
	 [a] &\mapsto \left[\varphi_f(a)-a\right]
	\end{align*}
	where $a$ is any relative cycle representing $[a]$.
\end{definition}
It is well defined because the boundary of the relative cycle 
$\varphi_f(a)$ coincides with the boundary of the relative 
cycle $a$. See \cref{fig:variation}.

\begin{figure}
	\centering
	\resizebox{0.2\textwidth}{!}{
\begingroup%
  \makeatletter%
  \providecommand\color[2][]{%
    \errmessage{(Inkscape) Color is used for the text in Inkscape, but the package 'color.sty' is not loaded}%
    \renewcommand\color[2][]{}%
  }%
  \providecommand\transparent[1]{%
    \errmessage{(Inkscape) Transparency is used (non-zero) for the text in Inkscape, but the package 'transparent.sty' is not loaded}%
    \renewcommand\transparent[1]{}%
  }%
  \providecommand\rotatebox[2]{#2}%
  \newcommand*\fsize{\dimexpr\f@size pt\relax}%
  \newcommand*\lineheight[1]{\fontsize{\fsize}{#1\fsize}\selectfont}%
  \ifx\svgwidth\undefined%
    \setlength{\unitlength}{116.28293015bp}%
    \ifx\svgscale\undefined%
      \relax%
    \else%
      \setlength{\unitlength}{\unitlength * \real{\svgscale}}%
    \fi%
  \else%
    \setlength{\unitlength}{\svgwidth}%
  \fi%
  \global\let\svgwidth\undefined%
  \global\let\svgscale\undefined%
  \makeatother%
  \begin{picture}(1,1.51929953)%
    \lineheight{1}%
    \setlength\tabcolsep{0pt}%
    \put(0,0){\includegraphics[width=\unitlength,page=1]{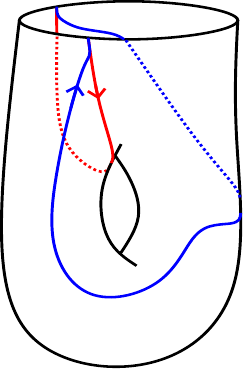}}%
    \put(0.49953438,1.00685425){\color[rgb]{0,0,0}\makebox(0,0)[lt]{\lineheight{1.25}\smash{\begin{tabular}[t]{l}\textbf{$a$}\end{tabular}}}}%
  \end{picture}%
\endgroup%
}
	\caption{An relative cycle (red) and its image with reversed orientation 
	(blue) by a diffeomorphism which is the identity on the boundary. }
	\label{fig:variation}
\end{figure}
The variation operator gives us a way to relate relative and absolute cycles 
but moreover, in the case of isolated hypersurface singularities, this operator 
is a linear isomorphism \cite[Theorem 2.2]{AGZVII}. It is important to remark 
that 
this is a theorem in singularity theory and that, in general, an analogous 
operator defined for a mapping class in $\Mod(\Si_f)$ does not yield an 
isomorphism. The proof of this result relies on Picard--Lefschetz theory and it 
is a further reflection of the fact that the monodromy of an isolated 
hypersurface singularity {\em moves everything around}. Other reflections of 
this 
phenomenon are, for example, the classical results that state the vanishing of 
the Lefschetz number $\Lambda_f=0$ of the monodromy \cite{NorLef}, or the more 
general result that there is a representative of the geometric 
monodromy that acts without fixed points \cite{Le}.

\subsubsection*{Relation with other invariants} Here we introduce other 
classical invariants that appear in the present paper and that are 
tightly related to the variation operator. Let $[c] \in H_1(\Si_f;\ZZ)$ be a 
cycle represented by a chain $c$ and let $\tilde{c}$ be the translation of $c$ 
to a nearby Milnor fiber in the positive direction indicated by the orientation 
of $\partial \mathbb{D}_\delta$, then the {\em Seifert form} is defined as
\[
\begin{split}
	\calL: H_1(\Si_f;\ZZ) \times  H_1(\Si_f;\ZZ)  &\to \ZZ \\
	\left([c],[d]\right) &\mapsto \mathrm{lk}(c,\tilde{d})
\end{split}
\]
that is, the linking number of $c$ and $\tilde{d}$ in the $3$-sphere. Note that 
even if we are using the Milnor fibration in the tube in this paper, it is 
equivalent to a fibration on the complement of a link in the $3$-sphere 
\cite{Milnor} and so this definition makes sense.
Finally, let $\bullet$ denote the {\em intersection pairing}
\[
\begin{split}
H_1(\Si_f,\partial \Si_f; \ZZ) \times H_1(\Si_f;\ZZ) &\to \ZZ \\
\left([a],[c]\right) &\mapsto [a] \bullet [c]
\end{split}
\]
that can be defined by taking the signed transversal intersection of a relative 
cycle representing $[a]$ and an absolute cycle representing $[c]$. The 
intersection 
pairing, which only depends on the topology of $\Si_f$ relates the variation 
operator and the Seifert form via \cite[Theorem 2.3]{AGZVII}
\begin{equation}\label{eq:var_seifert}
\calL([c],[c]) = \left(V_f^{-1}\left([c]\right)\right) \bullet [c],
\end{equation}
showing that $\Var_f$ and $\calL$ contain the same information. 

It is a consequence of the definition that knowing the monodromy {\em well 
enough} allows one to compute the variation operator. It is not so clear 
though, how to compute the inverse of the variation operator. Of course, it is 
always possible to compute the inverse of an integral matrix but one loses all 
geometrical aspect of the variation operator.

\subsection*{The geometric variation operator}
 In this subsection we define a geometric version of the variation operator 
 that takes arcs to closed curves. 

\subsubsection{Representing relative cycles}
The following lemma justifies the definition of the domain and target spaces of 
the geometric variation operator. A similar statement for absolute classes of 
homology and simple closed curves is true and more common in the literature 
(see for example \cite{Meeks} to understand the representation of primitive 
elements in absolute homology) but 
we couldn't find a reference for this relative counterpart.
\begin{lemma}\label{lem:representing_arcs}
	Let $\Si$ be an oriented compact surface with non-empty boundary. Then, 
	every relative class in $H_1(\Si,\partial \Si;\ZZ)$ can be 
	represented by a finite disjoint union of properly embedded arcs. 
\end{lemma}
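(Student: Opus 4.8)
The plan is to reduce the lemma to the standard fact that $\Sigma$ admits a system of disjoint properly embedded arcs cutting it into a disk, together with an elementary homological computation. We may assume $\Sigma$ is connected (otherwise argue on each connected component, each of which has non-empty boundary). First I would recall that there is a finite collection $A=\alpha_1\sqcup\cdots\sqcup\alpha_k$ of pairwise disjoint, properly embedded arcs in $\Sigma$ such that the surface obtained by cutting $\Sigma$ along $A$ is a single disk $D$: starting from $\partial\Sigma$, one repeatedly adjoins a properly embedded arc that is non-separating in the piece of surface cut open so far, the process terminating because the Euler characteristic strictly increases at each step and must end at a disk (so $k=1-\chi(\Sigma)$).

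The one step that requires a little care is checking that the relative classes $[\alpha_1],\dots,[\alpha_k]$ generate $H_1(\Sigma,\partial\Sigma;\ZZ)$. Write $B=A\cup\partial\Sigma$. Since $B$ is the image of $\partial D$ under the gluing map $D\to\Sigma$, it is connected, and collapsing it yields $\Sigma/B\cong D/\partial D\cong S^2$, so $H_1(\Sigma,B;\ZZ)\cong\widetilde H_1(S^2)=0$. As the $\alpha_i$ are disjoint, $B/\partial\Sigma$ is a wedge of $k$ circles, so $H_1(B,\partial\Sigma;\ZZ)\cong\ZZ^k$ with basis the classes of the $\alpha_i$. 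The long exact sequence of the triple $(\Sigma,B,\partial\Sigma)$,
\[
H_1(B,\partial\Sigma;\ZZ)\longrightarrow H_1(\Sigma,\partial\Sigma;\ZZ)\longrightarrow H_1(\Sigma,B;\ZZ)=0,
\]
shows that the inclusion-induced map $H_1(B,\partial\Sigma;\ZZ)\to H_1(\Sigma,\partial\Sigma;\ZZ)$ is surjective, and it carries each $[\alpha_i]$ to the class of the arc $\alpha_i$ in $H_1(\Sigma,\partial\Sigma;\ZZ)$; hence the $[\alpha_i]$ generate.

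Given an arbitrary class $c\in H_1(\Sigma,\partial\Sigma;\ZZ)$, write $c=\sum_{i=1}^{k}n_i[\alpha_i]$ with $n_i\in\ZZ$. Choose pairwise disjoint tubular neighborhoods $\nu(\alpha_i)$ of the $\alpha_i$, and inside each $\nu(\alpha_i)$ take $|n_i|$ parallel copies of $\alpha_i$, oriented like $\alpha_i$ if $n_i>0$ and oppositely if $n_i<0$. The resulting disjoint union of $\sum_i|n_i|$ properly embedded arcs represents $\sum_i n_i[\alpha_i]=c$, proving the lemma.

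Apart from the middle step there is no genuine obstacle; everything else is routine. If one prefers an argument that avoids the arc system, one can instead represent the Poincar\'e--Lefschetz dual of $c$ in $H^1(\Sigma;\ZZ)=[\Sigma,S^1]$ by a smooth map $g\colon\Sigma\to S^1$ and take $g^{-1}(\mathrm{pt})$ for a regular value, obtaining a properly embedded oriented $1$-manifold in the class $c$; one then removes each closed component $C$ by band-summing it to $\partial\Sigma$ along an embedded arc meeting $C$ and $\partial\Sigma$ only at its endpoints, which converts $C$ into a properly embedded arc and changes the underlying relative cycle only by the boundary of the disk-shaped band together with a chain supported on $\partial\Sigma$, hence not at all in $H_1(\Sigma,\partial\Sigma;\ZZ)$. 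In that approach the point needing attention is to perform these band-sums simultaneously for all closed components while keeping the whole family embedded and disjoint, which is possible because the connecting arcs can be chosen disjoint from one another and from everything else.
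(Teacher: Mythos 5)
Your primary proof is correct, and it takes a genuinely different route from the paper's. The paper identifies $H_1(\Si,\partial\Si;\ZZ)$ with $[\Si,\mathbb{S}^1]$ via duality and obstruction theory, smooths the classifying map, takes the preimage of a regular value to get an embedded oriented $1$-manifold, and then eliminates closed components one at a time by conjugating (band-summing) a component that bounds a complementary region meeting $\partial\Si$, repeating inductively until only arcs remain. Your argument instead fixes a disjoint arc system $A=\alpha_1\sqcup\cdots\sqcup\alpha_k$ cutting $\Sigma$ to a disk, uses the long exact sequence of the triple $(\Sigma,\,A\cup\partial\Sigma,\,\partial\Sigma)$ (together with $\Sigma/(A\cup\partial\Sigma)\cong S^2$) to see that the $[\alpha_i]$ generate $H_1(\Sigma,\partial\Sigma;\ZZ)$, and then represents an arbitrary class by oriented parallel pushoffs of the $\alpha_i$ in disjoint tubular neighborhoods. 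This buys you two things: it avoids the obstruction-theoretic input entirely, and it sidesteps the closed-component cleanup, since the representative is built out of arcs from the start; it also produces a representative supported near a fixed arc system, which can be useful for explicit computations. Your parenthetical alternative at the end is essentially the paper's proof; there, the one-line assertion that the connecting arcs "can be chosen disjoint from one another and from everything else" glosses over the real point (a closed component may be separated from $\partial\Sigma$ by other components), which the paper handles by choosing, at each inductive stage, a closed component adjacent to a complementary region that touches $\partial\Sigma$.
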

\begin{proof}
	We have the identifications $
	H_1(\Si,\partial \Si;\ZZ) \simeq H^1(\Si; \ZZ) \simeq [\Si, \mathbb{S}^1]$,
	where the first isomorphism is Alexander duality and the second is basic 
	obstruction theory. Take an element $\alpha \in H_1(\Si,\partial \Si;\ZZ)$ 
	and 
	let 
	$\rho_\alpha\in  [\Si, \mathbb{S}^1]$ be the associated map by the above 
	identification. We can assume that $\rho_\alpha$ is smooth since every 
	continuous map between manifolds is homotopic to a smooth one (for a proof 
	of 
	this result, see for example
	\cite[Proposition 17.8]{Bott}). Let 
	$s\in\mathbb{S}^1$ be a regular value which exists by Sard's theorem. Then 
	$\rho_\alpha^{-1}(s) \subset \Si$ is a $1$-dimensional manifold 
	representing 
	$\alpha$. In particular, $\rho_\alpha^{-1}(s)$ is a finite disjoint union 
	of 
	simple closed curves and arcs.
	
	Finally, one can get rid of any simple closed curves. Let $\{c_1, \ldots, 
	c_k\} \subset \rho_\alpha^{-1}(s)$ be all the simple closed curves in 
	$\rho_\alpha^{-1}(s)$. Let $\hat{\Si} = \Si \setminus \bigcup_i c_i$. Let 
	$c_j$ be a curve corresponding to a boundary component of a connected 
	component of $\hat{\Si}$ that contains also a component of $\partial
	\Si$. By conjugating the curve $c_j$ by a path from a point in $c_j$ 
	to a 
	point 
	on $\partial \Si$, one turns the curve $c_j$ into an arc that represents 
	the same class in relative homology. Repeat this process until one has got 
	rid of all curves in $\rho_\alpha^{-1}(s)$.
\end{proof}

\subsection*{The geometric variation operator}
For a surface $\Si$, let $\calC_\Si$ be the set of piecewise 
$C^1$ closed curves (possibly not simple). And let $\calI_\Si$ be the set of 
piecewise $C^1$ properly 
embedded arcs. That is,  the elements of  $\calC_\Si$ and $\calI_\Si$ are 
concatenations $a_1 \ast \cdots \ast a_k$ of arcs $a_i:I_i \to \Si_f$ (where 
$I_i$ is a connected closed segment) which are $C^1$ embeddings. So the end 
point of $a_i$ coincides with the starting 
point of $a_{i+1}$. Furthermore, in the case of closed curves the endpoint of 
$a_k$ is the starting point of $a_1$ and, in the case of properly embedded 
arcs, the starting point of $a_1$ and the endpoint of $a_k$  lie on $\partial 
\Si$ and the arcs are transverse to $\partial \Si$ at those points.

\begin{definition}
 We define the {\em geometric variation operator on single arcs} associated 
 with $\varphi_f$ as the map
\begin{align*}
	\Var_f:\calI_{\Si_f} &\to \calC_{\Si_f} \\
	a & \mapsto (\varphi_f(a))\ast  (-a)
\end{align*}
where $\phi_f(a)$ is the composition of the arc with the geometric monodromy 
and where $(-a)(t)=a(1-t)$.
\end{definition}
\begin{remark}
	The geometric variation operator $\Var_f$ induces the classical 
	variation operator $V_f$ for classes that can be represented by single arcs.
	
	Note that the image of a properly embedded arc can be a closed 
	curve with non-vanishing self intersection number.
\end{remark}

\subsection*{The Seifert form on separating curves} In this subsection we study 
the action of the Seifert form on separating curves. We deduce numerical 
constraints from a work of R. Kaenders \cite{Kaen} which in turn is strongly 
based on a previous work by E. Selling \cite{Sell} on quadratic forms.

\begin{lemma}\label{lem:seifert_separating}
	Let $f$ define an isolated plane curve singularity other than $A_1$ and let 
	$c \subset \Si_f$ be 
	a 
	non-nullhomologous separating simple closed 
	curve, then
	\[
	\calL([c],[c]) \leq -2.
	\] 
\end{lemma}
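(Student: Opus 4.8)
The plan is to compute $\calL([c],[c])$ directly in terms of the intersection multiplicities between the branches of $f$, exploiting that a separating curve is always homologous to a sub-sum of the boundary components of $\Si_f$. Write $f=f_1\cdots f_r$ for the branches and let $L_1,\dots,L_r$ be the corresponding components of $\partial\Si_f$ (the link of $f$), each oriented as part of $\partial\Si_f$. Then $\sum_{i=1}^r[L_i]=0$ in $H_1(\Si_f;\ZZ)$, and this is the only $\ZZ$-linear relation among the $[L_i]$, since $\ker\big(H_1(\partial\Si_f;\ZZ)\to H_1(\Si_f;\ZZ)\big)$ is generated by $[\partial\Si_f]=\sum_i[L_i]$. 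As $c$ separates, write $\Si_f=\Si_1\cup_c\Si_2$ and let $S\subseteq\{1,\dots,r\}$ index the components $L_i$ contained in $\Si_1$. Because $\partial\Si_1$ bounds in $\Si_1$ and $\Si_1$ carries the restricted orientation, one gets $[c]=-\sum_{i\in S}[L_i]=\sum_{k\notin S}[L_k]$ in $H_1(\Si_f;\ZZ)$; the hypothesis that $c$ is non-null-homologous then forces $S$ to be a proper nonempty subset, so in particular $r\ge 2$ and there is nothing to prove when $f$ is irreducible.

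Next I would evaluate the Seifert form on the two expressions just obtained for $[c]$. Since $\calL$ depends only on $[c]\in H_1(\Si_f;\ZZ)$, bilinearity gives
\[
\calL([c],[c])=\calL\!\Big(-\!\sum_{i\in S}[L_i],\ \sum_{k\notin S}[L_k]\Big)=-\sum_{i\in S}\sum_{k\notin S}\calL([L_i],[L_k]),
\]
so only values $\calL([L_i],[L_k])$ with $i\ne k$ occur. Representing $[L_i]$ and $[L_k]$ by disjoint boundary-parallel curves, and noting that the push-off to a nearby Milnor fiber used to define $\calL$ stays inside fibers and can thus be kept disjoint from $L_i$, one has $\calL([L_i],[L_k])=\mathrm{lk}(L_i,L_k)$; and by the classical identification of the linking numbers of the components of the link of a plane curve singularity with the intersection multiplicities of branches, $\mathrm{lk}(L_i,L_k)=(f_i\cdot f_k)_0=:m_{ik}\ge 1$. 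Hence $\calL([c],[c])=-\sum_{i\in S,\ k\notin S}m_{ik}$.

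Finally, since $S$ is a proper nonempty subset the number of cross-pairs is $|S|\,|S^c|\ge 1$ and each $m_{ik}\ge 1$, so $\sum_{i\in S,\,k\notin S}m_{ik}\ge |S|\,|S^c|$; if $|S|\,|S^c|\ge 2$ we are done, and the only remaining case is $r=2$ with $S=\{1\}$, where $\calL([c],[c])=-m_{12}$ and $m_{12}=1$ would force both branches to be smooth and transverse, i.e. $f\sim A_1$, which is excluded by hypothesis. Thus $\calL([c],[c])\le-2$ in all cases. The one delicate point — and the place where the real input lies — is the sign in $\calL([L_i],[L_k])=+m_{ik}$: the opposite sign would make $\calL$ positive on the span of the boundary classes and contradict the statement, so one must work in the orientation conventions under which \eqref{eq:var_seifert} holds and the Seifert form of $A_1$ equals $(-1)$. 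Equivalently, the needed fact — that the restriction of $\calL$ to the span of $\{[L_i]\}$ is the negative of the intersection-multiplicity matrix $(m_{ik})$ (a negative semidefinite form whose radical is $\sum_i[L_i]$) — is precisely what one extracts from the Kaenders--Selling analysis of the Seifert form; once it is in hand, everything else is elementary linear algebra over $\ZZ$.
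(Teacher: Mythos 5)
Your proof is correct and lands on the same computation as the paper, but reaches it by a slightly different route. The paper simply quotes Kaenders's formula $\calL([c],[c])=\sum_{i<j}-\nu_{ij}(\delta_i-\delta_j)^2$ from \cite[Proposition~2.2]{Kaen}, whereas you re-derive the relevant special case from scratch: you write $[c]$ as a subsum of boundary classes, use bilinearity of $\calL$, and identify $\calL([L_i],[L_k])$ for $i\neq k$ with the linking number $\mathrm{lk}(L_i,L_k)$, which in turn equals the intersection multiplicity $(f_i\cdot f_k)_0\ge 1$. This makes the argument more self-contained, at the price of having to pin down the sign in $\calL([L_i],[L_k])=+m_{ik}$; you correctly flag this as the one nontrivial input and observe that it is exactly what the Kaenders--Selling analysis supplies (one can also settle it directly by noting that $L_i$ and $L_k$ are already disjoint, so the push-off is irrelevant, and that, with the boundary orientations inherited from $\Si_f$, the pairwise linking numbers of the link components of a plane curve singularity are the positive intersection multiplicities). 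The endgame --- either the number of cross-pairs is at least $2$, or $r=2$ and excluding $A_1$ forces $m_{12}\ge 2$ --- is identical in both proofs.
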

\begin{proof}
	Let $r$ be the number of branches of the plane curve singularity defined by 
	$f$. That $c$ is separating with $[c] \neq 0$ in homology, implies that $r 
	\geq 
	2$, that is, that $f$ has at least two branches.
	
	Let $\Delta_1, \ldots,  \Delta_r$ be the $r$ boundary 
	components of $\Si_f$ with the orientation inherited from $\Si$ so 
	that $\sum_i[\Delta_i] = 0$ holds in homology.  The radical of the 
	intersection 
	form $\calS$ of 
	$\Si_f$ is 
	generated by the classes of $\Delta_1, \ldots,  \Delta_r$ (see \cite{Kaen} 
	for more 
	about this). By hypothesis, the 
	curve $c$ splits $\Si_f$ in two components $\Sigma_1$ and $\Sigma_2$. 
	Orient $c$ as a boundary component of $\Sigma_2$. Then 
	\[
	[c] = \sum_i \delta_i [\Delta_i].
	\]
	where $\delta_i=1$ if $\Delta_i$ is a boundary component of $\Sigma_1$ and 
	$\delta_i=0$ if $\Delta_i$ is a boundary component of $\Sigma_2$.
	Using the formula in \cite[Proposition 2.2]{Kaen}, we find that
	\[
	\calL([c],[c]) = \sum_{1 \leq i < j \leq r} -\nu_{ij}(\delta_i-\delta_j)^2.
	\]
	This sum contains non-zero terms because $[c] \neq 0$ implies that there 
	are boundary components of $\Si_f$ on both sides of $c$. Note that 
	$\nu_{ij} \geq 1$ and it is exactly $1$ only when $\Delta_i$ and $\Delta_j$ 
	form an $A_1$ singularity, that is, when they are smooth transversal 
	branches meeting at a point. Since by hypothesis $f$ is not an $A_1$ 
	singularity, then, either $r=2$ and $\nu_{12}\geq 2$; or $r > 2$ and there 
	are at least two non-zero terms in the above sum, proving 
	the result.
\end{proof}

\begin{lemma}\label{lem:inverse_separating}
	Let $f$ define an isolated plane curve singularity. Let $c \subset \Si_f$ 
	be a  non-nullhomologous separating simple closed 
	curve. Then, $V_f^{-1}([c])$ can't be represented by a single properly 
	embedded arc.
\end{lemma}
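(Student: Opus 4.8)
The plan is to combine the numerical constraint from \Cref{lem:seifert_separating} with the identity \eqref{eq:var_seifert} relating the variation operator, the Seifert form, and the intersection pairing, and then to bound the intersection number $[a]\bullet[c]$ when $[a]$ is represented by a \emph{single} properly embedded arc. Suppose for contradiction that $V_f^{-1}([c]) = [a]$ for some single properly embedded arc $a$. If $f$ is the $A_1$ singularity then $\Si_f$ is an annulus and has no non-nullhomologous separating simple closed curve, so we may assume $f$ is not $A_1$ and \Cref{lem:seifert_separating} applies, giving $\calL([c],[c]) \leq -2$. On the other hand, \eqref{eq:var_seifert} yields $\calL([c],[c]) = \bigl(V_f^{-1}([c])\bigr)\bullet[c] = [a]\bullet[c]$.

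The heart of the argument is therefore to show that a single properly embedded arc $a$ and the separating simple closed curve $c$ satisfy $[a]\bullet[c] \geq -1$ (indeed I expect $[a]\bullet[c]\in\{-1,0,1\}$), contradicting $[a]\bullet[c]=\calL([c],[c])\leq -2$. First I would isotope $a$ and $c$ into minimal position. Since $c$ is separating, it cuts $\Si_f$ into two pieces $\Sigma_1,\Sigma_2$; the arc $a$ has its two endpoints on $\partial\Si_f$, each endpoint lying in $\Sigma_1$ or $\Sigma_2$. If both endpoints lie in the same piece, then $a$ crosses $c$ an even number of times and, being an arc (not a loop), I claim it can be pushed off $c$ entirely, so $[a]\bullet[c]=0$. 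If the endpoints lie in different pieces, then $a$ crosses $c$ an odd number of times with a consistent net sign, and in minimal position it crosses exactly once, so $[a]\bullet[c]=\pm 1$. Either way $|[a]\bullet[c]|\leq 1$, contradicting $|\calL([c],[c])|\geq 2$. Hence no such single arc exists.

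I expect the main obstacle to be making the "push off" and "crosses exactly once" claims rigorous at the level of \emph{homological} intersection numbers rather than just geometric ones: one must rule out the possibility that algebraic cancellations among many geometric intersection points could still leave $[a]\bullet[c]$ arbitrary. The cleanest way around this is to argue purely homologically: using the decomposition $[c]=\sum_{i\in S}[\Delta_i]$ from the proof of \Cref{lem:seifert_separating} (where $S$ indexes the boundary components of $\Sigma_1$), compute $[a]\bullet[c]=\sum_{i\in S}[a]\bullet[\Delta_i]$, and observe that for a single properly embedded arc $[a]\bullet[\Delta_i]$ is $\pm 1$ for the (at most two) boundary components containing an endpoint of $a$ and $0$ otherwise; since $\sum_i[\Delta_i]=0$, the sum over $S$ is $0$ if both endpoints lie in the same piece and $\pm 1$ otherwise. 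This gives $|[a]\bullet[c]|\leq 1$ directly from homology, and combined with $\calL([c],[c])=[a]\bullet[c]$ and \Cref{lem:seifert_separating} completes the proof.
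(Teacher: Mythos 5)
Your proof follows the same strategy as the paper's: combine \Cref{lem:seifert_separating} with \eqref{eq:var_seifert} to obtain $V_f^{-1}([c])\bullet[c]\leq -2$, then observe that a single properly embedded arc and a separating curve have algebraic intersection number in $\{-1,0,1\}$. The paper asserts this last bound without proof; your homological argument via $[c]=\sum_{i\in S}[\Delta_i]$, linearity of $\bullet$, the fact that $[a]\bullet[\Delta_k]$ vanishes unless $\Delta_k$ carries an endpoint of $a$, and the relation $\sum_i[\Delta_i]=0$ fills in a genuine detail and is the right way to do it. You also correctly anticipate that your first, geometric ``push off'' argument is too strong: an arc with both endpoints on one side of $c$ can dip into the other side and be geometrically irreducibly linked with $c$, so only the algebraic count is controlled.

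However, your treatment of the $A_1$ case is wrong, and it is worth flagging because it exposes an oversight in the paper itself. The Milnor fiber of $A_1$ is an annulus, and the core curve of an annulus \emph{is} a non-nullhomologous separating simple closed curve: its complement is two half-open annuli, and its class generates $H_1(\Si_f;\ZZ)\cong\ZZ$. In fact, for $A_1$ the lemma is false: $H_1(\Si_f,\partial\Si_f;\ZZ)\cong\ZZ$ is generated by a single arc $a$ crossing the core once, the geometric monodromy is the Dehn twist along the core, and $V_f([a])=\pm[\text{core}]$, so $V_f^{-1}([\text{core}])$ is represented by a single arc. Consistently with this, the Kaenders formula gives $\calL([c],[c])=-\nu_{12}(\delta_1-\delta_2)^2=-1$ for $A_1$, not $\leq -2$. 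So \Cref{lem:inverse_separating} needs the same hypothesis ``other than $A_1$'' that \Cref{lem:seifert_separating} carries; this is harmless for the paper's applications, since \Cref{thm:single_arcs} already excludes all $A_n$, but the statement as written is too broad. You were right to notice that $A_1$ requires attention because \Cref{lem:seifert_separating} excludes it; the correct resolution is to exclude $A_1$ from the present lemma as well, not to claim (falsely) that the annulus has no non-nullhomologous separating curve.
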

\begin{proof}
 By the previous \Cref{lem:seifert_separating} and by \cref{eq:var_seifert}, we 
 have the inequality 
$V_f^{-1}([c]) \bullet [c] 
\leq -2$. But the algebraic intersection number of a separating curve and a 
single properly embedded arc is either $-1$, $1$ or $0$ depending on the 
orientation of the arc and on whether both ends of the arc lie on the 
same or different components of $ \Si_f \setminus \{c\}$.
\end{proof}

The previous lemmas are used at the end of the following section to produce an
interesting example that shows the necessity of certain hypothesis in our 
theorems. But, before we are able to explain it, we need to introduce {\em 
winding numbers}.

\section{Winding numbers of curves and arcs}
\label{sec:winding_numbers}
Let $\Si$ be an oriented compact surface with non-empty boundary. Let 
$\calI_\Si$ be the set of piecewise $C^1$ properly embedded arcs of the surface 
$\Si$. And let $\calC_S$ be the set of piecewise $C^1$ closed curves of $S$ 
with  possibly  self intersections (so not necessarily simple closed curves).

\subsection*{Relative framings and relative winding number functions}
In this subsection we recall definitions and properties of {\em relative 
	framings} of a surface (see also 
\cite[Section 2]{CalSal}). 

A {\em framing} of $\Si$ is a 
trivialization of the tangent bundle $T\Si$. With a Riemannian metric fixed, 
framings of $\Si$ are in correspondence with nowhere vanishing vector fields on 
$\Si$, or equivalently, with isomorphisms of $SO(2)$ bundles 
$\mathbb{S}^1(T\Si) \simeq \Si 
\times \mathbb{S}^1$ where $\mathbb{S}^1(T\Si) $ is the circle tangent bundle 
of $\Si$.

Two framings $\phi$ and $\psi$ are {\em isotopic} if the corresponding 
vector fields are isotopic through non-vanishing vector fields, and are {\em 
	relatively isotopic} if the isotopy can be chosen to act trivially on
$\partial \Si$. 

Let $\phi_\xi$ be a framing corresponding with a nowhere vanishing vector field 
$\xi$ and let $\gamma: [0,1]\to \Si$  be a $C^1$ embedding with 
$\gamma(0)=\gamma(1)$ and $\gamma'(0)=\gamma'(1)$. Equivalently, $\gamma$ is a 
representative of a $C^1$ simple closed curve. Given such piece of that, we 
can associated to $\gamma$ an integer which is called the {\em winding 
	number}. This measures how the vector 
field $\xi_\phi|_{\gamma(t)}$ {\em winds} around the
forward-pointing vector field $\gamma'(t)$.
\begin{equation}\label{eq:winding_embedding}
	\phi_\xi(\gamma) = \int_{0}^1 d \,\ang \left(\gamma'(t), 
	\xi_{\gamma(t)}\right) 
	\in \ZZ.
\end{equation}
The integer $\phi_\xi(\gamma)$ is invariant under isotopy of both 
$\xi$ and $\gamma$. Letting $\calC^1_\Si$ denote the set of isotopy 
classes 
of oriented simple closed curves of $\Si$ defined by $C^1$ embeddings. Then, 
\cref{eq:winding_embedding} defines a map
\[
\phi_\xi: \calC^1_\Si \to \ZZ.
\]

Suppose now that each boundary component $\Delta_i$ of $\Si$ is equipped with a 
point $p_i$ such that $\xi$ is inward-pointing at $p_i$. We call such 
$p_i$ a {\em legal basepoint}.
Choose exactly one legal basepoint on each boundary component. One 
might be concerned about the possibility that no legal basepoints exist, but 
this only happens when the boundary component has zero winding 
number. As was 
shown in \cite{Sal} using \cite{Khan}, boundary components of Milnor fibers 
equipped 
with the complex Hamiltonian vector field 
$
\xi_f = \frac{\partial f}{\partial y} \frac{\partial }{\partial x} - 
\frac{\partial f}{\partial x} \frac{\partial }{\partial y} = 
\left(\frac{\partial f}{\partial y},- 
\frac{\partial f}{\partial x}\right)
$
have negative winding number. A {\em legal 
	arc} on $\Si$ is a 
properly-embedded arc $a: [0,1] \to \Si$ that begins and ends at distinct 
legal basepoints, and such that $a$ is tangent to $\xi$ at both 
endpoints. The winding number of a legal arc is then necessarily of the form $c 
+ \frac{1}{2}$ for $c \in \ZZ$, and is invariant up to isotopy through legal 
arcs. Observe also that $\Mod(\Si)$ acts on 
the set of legal isotopy classes of legal arcs. 

We let $\calC_\Si^{1,+}$ be the set obtained from $\calC^1_\Si$ by adding all 
isotopy classes of oriented legal arcs. Having chosen a system 
of legal 
basepoints, a framing $\phi_\xi$ gives rise to a {\em relative winding number 
	function}
\[
\phi: \calC_\Si^{1,+} \to \tfrac{1}{2} \ZZ.
\]
The relative winding number function associated to a framing $\phi$ is clearly 
invariant under relative isotopies of the framing. Crucially, the converse 
holds as well. 

\begin{prop}[c.f. Proposition 2.1, \cite{CalSal}]\label{prop:relative_wn}
	Let $\Si$ be a surface of genus $g \ge 2$, and let $\phi$ and $\psi$ be 
	framings of $\Si$ that restrict to the same framing of $\partial \Si$. If 
	the 
	relative winding number functions associated to $\phi$ and $\psi$ are 
	equal, then the framings $\phi$ and $\psi$ are relatively isotopic. 
\end{prop}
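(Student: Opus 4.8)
The plan is to reduce the claim to the vanishing of a single relative cohomology class, the \emph{difference class} of the two framings. Let $\xi$ and $\eta$ be nowhere vanishing vector fields representing $\phi$ and $\psi$. Since $\Si$ has non-empty boundary it is homotopy equivalent to a finite wedge of circles, so the circle tangent bundle $\mathbb{S}^1(T\Si)$ is trivial; fixing a trivialization presents $\xi$ and $\eta$ as maps $\Si\to\mathbb{S}^1$, and their pointwise angular difference $\theta=\ang(\eta,\xi)\colon\Si\to\mathbb{S}^1$ is independent of the chosen trivialization. Because $\phi$ and $\psi$ restrict to the same framing of $\partial\Si$ we have $\xi=\eta$ along $\partial\Si$, so $\theta$ sends $\partial\Si$ to the basepoint $\ast\in\mathbb{S}^1$ and hence determines a class $[\theta]\in[(\Si,\partial\Si),(\mathbb{S}^1,\ast)]\cong H^1(\Si,\partial\Si;\ZZ)$ by basic obstruction theory. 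The same obstruction theory shows that $\phi$ and $\psi$ are relatively isotopic if and only if $\theta$ is null-homotopic rel $\partial\Si$, i.e.\ if and only if $[\theta]=0$. So everything comes down to showing that equality of the relative winding number functions forces $[\theta]=0$.

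First I would test $[\theta]$ against closed curves. For a $C^1$ simple closed curve $\gamma$, using the angle identity $\ang(\gamma',\xi)-\ang(\gamma',\eta)=\ang(\eta,\xi)$ together with \eqref{eq:winding_embedding} one gets
\[
\phi(\gamma)-\psi(\gamma)=\int_\gamma d\,\ang(\gamma',\xi)-\int_\gamma d\,\ang(\gamma',\eta)=\int_\gamma d\theta=\langle[\theta],[\gamma]\rangle ,
\]
where the last pairing is the evaluation of $H^1(\Si,\partial\Si;\ZZ)$ on $H_1(\Si;\ZZ)$ obtained by first mapping $[\theta]$ to $H^1(\Si;\ZZ)$. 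Since the classes of simple closed curves generate $H_1(\Si;\ZZ)$, the hypothesis that $\phi$ and $\psi$ have equal relative winding number functions forces the image of $[\theta]$ in $H^1(\Si;\ZZ)\cong\Hom(H_1(\Si;\ZZ),\ZZ)$ to vanish. By the long exact cohomology sequence of the pair $(\Si,\partial\Si)$ this means $[\theta]$ lies in the image of the connecting map $\delta\colon H^0(\partial\Si;\ZZ)\to H^1(\Si,\partial\Si;\ZZ)$; write $[\theta]=\delta(e)$ with $e=(e_1,\dots,e_b)\in H^0(\partial\Si;\ZZ)\cong\ZZ^{b}$, the $\Delta_k$ being the boundary components of $\Si$ (we may assume $\Si$ connected).

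Next I would use the legal arcs to eliminate the boundary ambiguity $e$. For a legal arc $a$ running from the fixed legal basepoint on $\Delta_i$ to the one on $\Delta_j$ (such basepoints having been chosen on every boundary component — possible whenever no boundary component has zero winding number, as for Milnor fibers equipped with $\xi_f$ by \cite{Sal,Khan}), the same computation gives $\phi(a)-\psi(a)=\int_a d\theta=\langle[\theta],[a]\rangle$, now via the Kronecker pairing $H^1(\Si,\partial\Si;\ZZ)\times H_1(\Si,\partial\Si;\ZZ)\to\ZZ$; the integral is a well-defined integer precisely because $\theta$ equals $\ast$ at both endpoints of $a$. By naturality of $\delta$ with respect to the boundary homomorphism $\partial\colon H_1(\Si,\partial\Si;\ZZ)\to H_0(\partial\Si;\ZZ)$ one has $\langle\delta(e),[a]\rangle=\pm\langle e,\partial[a]\rangle=\pm(e_j-e_i)$. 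As $\Si$ is connected one may choose (embedded, properly embedded) legal arcs joining any prescribed pair of boundary components, so the hypothesis yields $e_i=e_j$ for all $i,j$; thus $e$ is a constant vector, i.e.\ it lies in the image of $H^0(\Si;\ZZ)\to H^0(\partial\Si;\ZZ)$, whence $\delta(e)=0$ and $[\theta]=0$. Therefore $\phi$ and $\psi$ are relatively isotopic.

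The genus hypothesis $g\ge 2$ is carried over from \cite{CalSal} and is used here only through the fact that $H_1(\Si;\ZZ)$ is generated by classes of simple closed curves. The main obstacle — and the one point where the relative setting genuinely departs from the closed case — is the final step: winding numbers of closed curves determine $[\theta]$ only up to the contribution of $\widetilde H^{0}(\partial\Si;\ZZ)$, and it is exactly to remove this contribution that one enlarges $\calC^1_\Si$ to $\calC^{1,+}_\Si$ by adjoining legal arcs. Making that step precise (producing embedded legal arcs between all pairs of boundary components and identifying $\int_a d\theta$ with the Kronecker pairing, with the correct sign) is where the care is needed; the rest is a routine chase through the long exact sequence of the pair.
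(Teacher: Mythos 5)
Your proof is correct. Note that the paper itself does not prove this proposition; the ``c.f.''\ tag indicates it is being quoted from Proposition~2.1 of \cite{CalSal}, and no argument appears in the text, so there is no in-paper proof against which to compare. Your route is the expected one: reduce to vanishing of the difference class $[\theta]\in H^1(\Si,\partial\Si;\ZZ)$, kill its image in $H^1(\Si;\ZZ)$ by pairing against simple closed curves, and remove the residual $\widetilde H^0(\partial\Si;\ZZ)$-ambiguity by pairing against legal arcs via the naturality identity $\langle\delta(e),[a]\rangle=\pm\langle e,\partial[a]\rangle$. Two small remarks. The sign $\pm$ is immaterial since you only need the pairing to vanish, so the ``point where care is needed'' that you flag is already adequately handled by your write-up. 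And you do not need legal arcs joining \emph{every} pair of boundary components: a spanning tree, i.e.\ $b-1$ legal arcs, already forces $e_1=\dots=e_b$, which also sidesteps any concern about finding an embedded legal arc between an arbitrary prescribed pair of the chosen basepoints. As you observe, the hypothesis $g\ge 2$ plays no role in your argument and is inherited from \cite{CalSal}, where it is needed for surrounding structural results rather than for this injectivity statement.
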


\begin{remark}[Good arcs]\label{rem:invariance_rlwn}
	Observe that the restriction of choosing {\em exactly one legal base point} 
	at 
	each boundary component highlights the strength of \Cref{prop:relative_wn} 
	since it says that it is only necessary to check the values of two relative 
	winding number functions on simple closed curves and {\em legal arcs} to 
	verify 
	if the corresponding vector fields are isotopic. However, in order to have 
	a 
	well-defined winding number function we can consider, and will do so from 
	now 
	on,  what we call {\em good
		arcs}. 
	
	A {\em good arc} is a properly embedded arc $a:[0,1] \to \Si$, transverse 
	to 
	$\partial \Si$, with the 
	property that $a'(0)= \pm k \xi_f(a(0))$ and $a'(1)= \mp k' \xi_f(a(1))$ 
	with 
	$k,k' \in \RR_{>0}$. This property guarantees that $\phi (a)$ is of the 
	form 
	$c+1/2$ with $c \in \ZZ$ just like in the case of legal arcs. Note that 
	with 
	this definition we allow a lot more of arcs since, for example,  we allow 
	them 
	to start 
	and end at the same boundary component which was not allowed in the 
	definition 
	of legal arc because starting and endpoints were required to be distinct.
	
	Observe also that, we can always require $\varphi \in \Mod(\Si)$ to be the 
	identity on a small collar neighborhood of $\partial \Si$, and so 
	$\Mod(\Si)$ 
	acts on 
	the set of isotopy classes of goods arcs where isotopies are required to be 
	along good arcs. 
\end{remark}
\subsection*{Winding numbers for piecewise $C^1$ curves and arcs} 
\label{sec:piecewise_c1}For the 
purposes of this work, it is necessary to extend the definition of winding 
number beyond the case of $C^1$ embeddings of curves and arcs. We note that 
there is a natural theoretical generalization for $C^0$ embeddings. Indeed, let 
$c: 
\mathbb{S}^1 \to \Si$ be a topological embedding of a circle into $\Si$. It is 
a classical theorem in the theory of mapping 
class groups that $c$ can be approximated by $C^1$ (or even smooth) simple 
closed curves and that, even more, curves that are close enough in some 
appropriate compact-open topology, are actually isotopic to $c$ (see 
\cite[1.2.2]{Farb}). Also, any two 
approximations $c'$ and $c''$ to $c$ are themselves isotopic through $C^1$ 
embeddings. Thus, there is a way of defining a winding number for any 
topologically embedded simple closed curve. In \cite{ChillI, ChillII}, with 
more effort, the notion of winding number is even extended to all $\pi_1(\Si)$ 
and coincides with the description we just gave for topological embeddings of 
$\mathbb{S}^1$.

However, this approach is not very friendly from a calculation point of view. 
That is, it is in general not very manageable to deal with approximations and 
the process misses the practical point of winding numbers. So the way we tackle 
this issue in this work in somewhat intermediate. We deal with the case of 
immersed piecewise 
$C^1$ simple closed curves and arcs which are defined by concatenated 
immersions of intervals (see \cite{Rein} for more on this approach). Let us 
define the winding number $\phi_\gamma$ of any immersed $C^1$ 
arc $a$ by the 
\cref{eq:winding_embedding} so $\phi_\gamma(a) \in \RR$ and is no longer an 
integer.
Let $a$ be either a simple closed curve or a properly embedded arc which is 
defined by a
concatenation of properly embedded $C^1$ arcs:
\[
a=a_1 \ast \cdots \ast a_k.
\]
And let $\theta_j = \ang \left(a_{j}'(1), a_{j+1}'(0)\right)$ for $j= 1, 
\ldots, k-1$ and let $\theta_k = \ang \left(a_{k}'(1), a_{1}'(0)\right)$ if 
$a_{k}(1) =a_{1}(0)$ and $\theta_k =0$ otherwise. Assume for the moment that 
$|\theta_j| < \pi$ for all $j \in \{1, \ldots, k\}$. In this case, we can define
\begin{equation}\label{eq:def_C1}
	\phi_\xi (a) = \sum_{j=1}^k \phi_\xi (a_j) + \theta_j \in \ZZ.
\end{equation}

\begin{figure}
	\centering
	\small
	\resizebox{0.9\textwidth}{!}{
\begingroup%
  \makeatletter%
  \providecommand\color[2][]{%
    \errmessage{(Inkscape) Color is used for the text in Inkscape, but the package 'color.sty' is not loaded}%
    \renewcommand\color[2][]{}%
  }%
  \providecommand\transparent[1]{%
    \errmessage{(Inkscape) Transparency is used (non-zero) for the text in Inkscape, but the package 'transparent.sty' is not loaded}%
    \renewcommand\transparent[1]{}%
  }%
  \providecommand\rotatebox[2]{#2}%
  \newcommand*\fsize{\dimexpr\f@size pt\relax}%
  \newcommand*\lineheight[1]{\fontsize{\fsize}{#1\fsize}\selectfont}%
  \ifx\svgwidth\undefined%
    \setlength{\unitlength}{437.32843835bp}%
    \ifx\svgscale\undefined%
      \relax%
    \else%
      \setlength{\unitlength}{\unitlength * \real{\svgscale}}%
    \fi%
  \else%
    \setlength{\unitlength}{\svgwidth}%
  \fi%
  \global\let\svgwidth\undefined%
  \global\let\svgscale\undefined%
  \makeatother%
  \begin{picture}(1,0.33224587)%
    \lineheight{1}%
    \setlength\tabcolsep{0pt}%
    \put(0,0){\includegraphics[width=\unitlength,page=1]{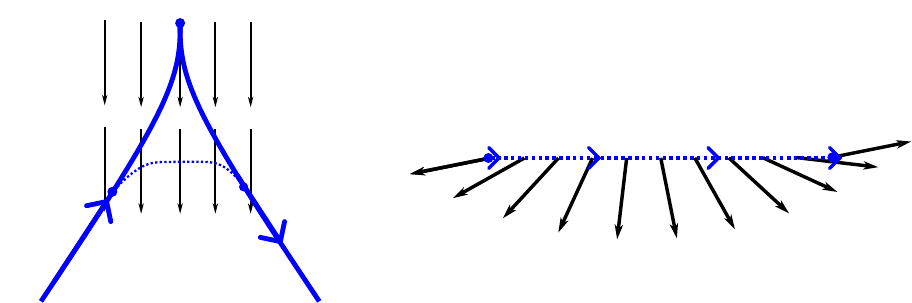}}%
    \put(0.18264975,0.32220441){\color[rgb]{0,0,0}\makebox(0,0)[lt]{\lineheight{1.25}\smash{\begin{tabular}[t]{l}\textbf{$c_j$}\end{tabular}}}}%
    \put(0.00031048,0.11800069){\color[rgb]{0,0,0}\makebox(0,0)[lt]{\lineheight{1.25}\smash{\begin{tabular}[t]{l}\textbf{$a_j(1-1/n)$}\end{tabular}}}}%
    \put(0.20186316,0.16460602){\color[rgb]{0,0,0}\makebox(0,0)[lt]{\lineheight{1.25}\smash{\begin{tabular}[t]{l}\textbf{$b_n$}\end{tabular}}}}%
    \put(0.69426002,0.17692462){\color[rgb]{0,0,0}\makebox(0,0)[lt]{\lineheight{1.25}\smash{\begin{tabular}[t]{l}\textbf{$b_n$}\end{tabular}}}}%
    \put(0.52905733,0.18013665){\color[rgb]{0,0,0}\makebox(0,0)[lt]{\lineheight{1.25}\smash{\begin{tabular}[t]{l}\textbf{$a_j(1-1/n)$}\end{tabular}}}}%
    \put(0.28046787,0.12406459){\color[rgb]{0,0,0}\makebox(0,0)[lt]{\lineheight{1.25}\smash{\begin{tabular}[t]{l}\textbf{$a_{j+1}(1/n)$}\end{tabular}}}}%
    \put(0.90125439,0.18552994){\color[rgb]{0,0,0}\makebox(0,0)[lt]{\lineheight{1.25}\smash{\begin{tabular}[t]{l}\textbf{$a_{j+1}(1/n)$}\end{tabular}}}}%
  \end{picture}%
\endgroup%
}
	\caption{On the left we see the chart $U$. The points $a_{j}(1-1/n),c_j$ 
		and $a_{j+1}(1/n)$ are marked. The arc $b_n$ between $a_{j}(1-1/n)$ 
		and $a_{j+1}(1/n)$ is dotted in blue. In black we see the vector field 
		$\xi$ which in this case is tangent to the interval $a_j$ and $a_{j+1}$ 
		at 
		$c_j$}
	\label{fig:approx_C1_arc}
\end{figure}

This leaves out an important case for this paper: when 
$\theta_j = \pm \pi$. This situation, which is also not covered in \cite{Rein}, 
plays an important role here. In 
this case we do the following in order to the 
decide the correct sign of $\theta_j$ so that the formula from \cref{eq:def_C1} 
is still valid. Let 
$c_j=a_j(1)=a_{j+1}(0)$ be the intersection point of two consecutive segments 
of $a$, and let $\rho:U \to \RR^2$ be a small chart of $\Si$ 
around $c_j$. See \cref{fig:approx_C1_arc} to follow this construction in the 
important case when all three vectors $a_j'(1),a_{j+1}'(0)$ and 
$\xi_{a_j(1)}$ lie on the same line (in particular $\theta_j = \pm \pi$) :
\begin{enumerate}
	\item let $p_n = a_j(1-1/n)$ and let $q_n=a_{j+1}(1/n)$ for $n \in 
	\ZZ_{>0}$ be sequences of points in both segments converging to $c_j$
	\item for $n$ big enough, $p_n$ and $q_n$ are in $U$.  We define $b_n:[0,1] 
	\to U$ as a smooth arc satisfying
	\begin{enumerate}
		\item $b_n(0) = a_j(1-1/n)$ and $b_n(1)=a_{j+1}(1/n)$,
		\item $b_n'(0) = a_j'(1-1/n)$ and $b_n'(1)=a_{j+1}'(1/n)$,
		\item $b_n(t) = (1-t)a_j(1-1/(n+1)) + ta_{j+1}(1/(n+1))$, for $t \in 
		(\epsilon, 1-\epsilon)$. That is, on the interval $(\epsilon, 
		1-\epsilon)$, the curve $b_n$ is the linear interpolation between 
		$a_j(1-1/(n+1))$ and $a_{j+1}(1/(n+1))$,
		\item for $t \in [0, \epsilon]$, the curve $b_n(t)$ is any $C^1$ curve  
		that satisfies 
		\[b'_n(t)=
		(1-t/\epsilon)a_j'(1-1/n) + t/\epsilon \left(a_{j+1}(1/(n+1)) - 
		a_j(1-1/(n+1)) 
		\right),\]
		\item and similarly, for $t \in [1-\epsilon, 1]$, the curve $b_n(t)$ is 
		any 
		$C^1$ curve whose forward-pointing vector interpolates linearly between 
		the 
		vector $(a_{j+1}(1/(n+1)) - a_j(1-1/(n+1))$ and $a_{j+1}'(1/n)$. 
	\end{enumerate} 
	\item for $n$ big enough, the concatenation $a_j([0, 1-1/n]) \ast b_n \ast 
	a_{j+1}([0,1/n]) \cap U$ is homotopic to $a_j \ast a_{j+1} \cap U$.
	\item by the flowbox theorem, since $\xi$ has no singular points, for $n$ 
	big enough, $I_n=b_n([0,1])$ is small enough and $\xi|_{I_n}$ is 
	transversal to $I_n$ 
	and points always either to the right of $I_n$ or to the left of $I_n$ with 
	$I_n$ oriented by the forward-pointing vector of $b_n$. See 
	right hand side of \cref{fig:approx_C1_arc}.
	
	\item if $\xi|_{I_n}$ points to the right, then 
	$\pi<\ang\left(a'_j(1),b_n'(0)\right) 
	< 0$ and also $\pi<\ang\left(b_n'(1), a'_{j+1}(0)\right) < 0$. The 
	inequalities are inverted if $\xi|_{I_n}$ points to the left.
	\item $\theta_j = \pi$ if 
	$\xi$ points to the left of $I_n$ or 
	equivalently, if the forward-pointing vector of $I_n$ at $p$ and $\xi_p$ 
	(in that order) form a positive bases of the tangent plane $T_p\Si$. We 
	define $\theta_j = -\pi$ in the other case.
\end{enumerate}

Our choice of signs gives, of course, the same values as if we approximated  
our 
curve by the small arcs $b_n$ near the conflicting points. And that is 
precisely the proof that our choice 
agrees with the theoretical way of assigning winding numbers to every $C^0$ 
curve.

\begin{remark}\label{rem:good_arc_not_imp}
	Let $a: [0,1] \to \Si_f$ be any properly embedded arc, by isotoping $a$ 
	possibly sliding it along $\partial \Si_f$ along properly embedded arcs, we 
	can take $a$ to a good arc $a'$. In the process, we find two arcs $b_0$ and 
	$b_1$ such that $b_0 \ast a' \ast b_1$ is an arc which is relatively 
	isotopic to $a$. Assume that $\Var_{f}(a)$ is a simple closed curve. Then,
	\[
	\begin{split}
		\phi_f(\Var_{f}(a)) &= \phi_f(\varphi_f(a))-\phi_f(a) \\&= 
		\phi_f(\varphi_f(b_0 
		\ast a' 
		\ast b_1))-\phi_f(b_0 \ast a' \ast b_1) \\
		&= \phi_f(\varphi_f(a'))-\phi_f(a') \\
		&= \phi_f(\Var_{f}(a')).	
	\end{split}
	\]
	Which shows that, for the purposes of this work, one does not have to worry 
	about the behaviour of the arcs near the boundary but, rather legal and 
	good 
	arcs are a
	technical tool in order to have a convenient codomain for relative winding 
	numbers functions and prove results like \Cref{prop:relative_wn}.
\end{remark}

Next, we explain the promised example at the end of the previous section.
\subsection*{An example}
The following example shows the existence, in Milnor fibers of isolated plane 
curve singularities, of separating simple closed curves with winding number 
equal $0$. But being separating prevents them from being geometric vanishing 
cycles. That vanishing cycles are non-separating simple closed curves follows, 
for example, from the connectivity of the Dynkin diagram where algebraic 
intersection numbers are considered to draw edges (see \cite{Gab}) and the fact 
that separating curves have $0$ algebraic intersection number with any other 
curve.

\begin{example}\label{ex:separating_wn0}
	Let $f(x,y) = (y^3-x^4)x$. Since $f$ defines a plane curve singularity $B$ 
	with two branches $B_1$ and $B_2$ at the origin. The Milnor fiber $\Si_f$ 
	has two boundary components $\Delta_1$ and $\Delta_2$. 
		\begin{figure}
		\centering
		\resizebox{0.8\textwidth}{!}{
\begingroup%
  \makeatletter%
  \providecommand\color[2][]{%
    \errmessage{(Inkscape) Color is used for the text in Inkscape, but the package 'color.sty' is not loaded}%
    \renewcommand\color[2][]{}%
  }%
  \providecommand\transparent[1]{%
    \errmessage{(Inkscape) Transparency is used (non-zero) for the text in Inkscape, but the package 'transparent.sty' is not loaded}%
    \renewcommand\transparent[1]{}%
  }%
  \providecommand\rotatebox[2]{#2}%
  \newcommand*\fsize{\dimexpr\f@size pt\relax}%
  \newcommand*\lineheight[1]{\fontsize{\fsize}{#1\fsize}\selectfont}%
  \ifx\svgwidth\undefined%
    \setlength{\unitlength}{533.47256398bp}%
    \ifx\svgscale\undefined%
      \relax%
    \else%
      \setlength{\unitlength}{\unitlength * \real{\svgscale}}%
    \fi%
  \else%
    \setlength{\unitlength}{\svgwidth}%
  \fi%
  \global\let\svgwidth\undefined%
  \global\let\svgscale\undefined%
  \makeatother%
  \begin{picture}(1,0.29129786)%
    \lineheight{1}%
    \setlength\tabcolsep{0pt}%
    \put(0,0){\includegraphics[width=\unitlength,page=1]{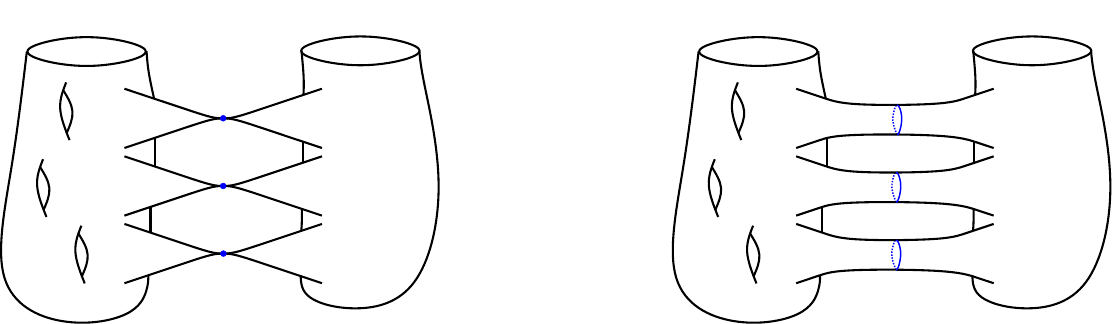}}%
    \put(0.039301,0.27778782){\color[rgb]{0,0,0}\makebox(0,0)[lt]{\lineheight{1.25}\smash{\begin{tabular}[t]{l}\textbf{$\Delta_1$}\end{tabular}}}}%
    \put(0.29651043,0.27778782){\color[rgb]{0,0,0}\makebox(0,0)[lt]{\lineheight{1.25}\smash{\begin{tabular}[t]{l}\textbf{$\Delta_2$}\end{tabular}}}}%
    \put(0.65089102,0.27726469){\color[rgb]{0,0,0}\makebox(0,0)[lt]{\lineheight{1.25}\smash{\begin{tabular}[t]{l}\textbf{$\Delta_1$}\end{tabular}}}}%
    \put(0.90810049,0.27726469){\color[rgb]{0,0,0}\makebox(0,0)[lt]{\lineheight{1.25}\smash{\begin{tabular}[t]{l}\textbf{$\Delta_2$}\end{tabular}}}}%
  \end{picture}%
\endgroup%
}
		\caption{On the left we see the nodal curve $\Si_1 \cup \Si_2$ and in 
			blue we see the three points $\Si_1 \cap \Si_2$. On the right we 
			see 
			the result after smoothing out the three $A_1$ points. This surface 
			is 
			homeomorphic to the Milnor fiber $\Si_f$.}
		\label{fig:example_van_sep}
	\end{figure}
	Next, we do a construction to compute the winding numbers 
	$\varphi_f(\Delta_1)$ 
	and $\varphi_f(\Delta_2)$.
	Let $f_t(x,y) = (y^3-x^4-t)(x-t)$ be a deformation of $f$. For $t \neq 0$ 
	and small, the curve $B_t = f_t^{-1}(0) \cap \mathbb{B}_\epsilon$ defined 
	by $f_t$ is a nodal curve consisting of the Milnor fiber $\Si_1$ of $B_1$ 
	meeting transversely the Milnor fiber $\Si_2$ of $B_2$ in $3$ points 
	(because $3$ is the intersection multiplicity $B_1\cdot B_2$ of the two 
	branches). This construction shows that the Milnor fiber $\Si_f$ can be 
	constructed by performing a triple connected sum of $\Si_1$ and $\Si_2$, 
	or equivalently up to homeomorphism, by removing $3$ disks from each Milnor 
	fiber and gluing the boundary components using $3$ cylinders as in 
	\cref{fig:example_van_sep}. Moreover, the core curves of each of these 
	cylinders are geometric vanishing cycles.

	Therefore, using \cite[Theorem B]{Sal} and the Homological Coherence 
	Property (see 
	\cite[Lemma 2.4]{CalSal} or \cite{Humph}) 
	we find that
	\[
	\begin{split}
	 \phi_f(\Delta_1) &= \chi(\hat\Si_1) = 2-2g(\Si_1) - 4 = 2-2*3-4=-8 \\
	\phi_f(\Delta_2) &= \chi(\hat\Si_2) = 2-2g(\Si_2) - 4 = 2-2*0-4=-2.
	\end{split}
	\]
	Where we are using that the three vanishing cycles of 
	\cref{fig:example_van_sep} and $\Delta_1$ bound a 
	surface of genus $g(\Si_1)$ in the first case. And analogously for the 
	second case.
	
	Again, using homological coherence, we find that any separating simple 
	closed curve $c$ that separates $\Si_f$ into a surface of genus $4$ that 
	contains $\Delta_1$ and a surface of genus $1$ containing $\Delta_2$ is a 
	simple 
	closed curve with $\phi_f(c)=0$. 
	
\begin{figure}
		\centering
		\resizebox{0.5\textwidth}{!}{
\begingroup%
  \makeatletter%
  \providecommand\color[2][]{%
    \errmessage{(Inkscape) Color is used for the text in Inkscape, but the package 'color.sty' is not loaded}%
    \renewcommand\color[2][]{}%
  }%
  \providecommand\transparent[1]{%
    \errmessage{(Inkscape) Transparency is used (non-zero) for the text in Inkscape, but the package 'transparent.sty' is not loaded}%
    \renewcommand\transparent[1]{}%
  }%
  \providecommand\rotatebox[2]{#2}%
  \newcommand*\fsize{\dimexpr\f@size pt\relax}%
  \newcommand*\lineheight[1]{\fontsize{\fsize}{#1\fsize}\selectfont}%
  \ifx\svgwidth\undefined%
    \setlength{\unitlength}{123.53462039bp}%
    \ifx\svgscale\undefined%
      \relax%
    \else%
      \setlength{\unitlength}{\unitlength * \real{\svgscale}}%
    \fi%
  \else%
    \setlength{\unitlength}{\svgwidth}%
  \fi%
  \global\let\svgwidth\undefined%
  \global\let\svgscale\undefined%
  \makeatother%
  \begin{picture}(1,0.32431511)%
    \lineheight{1}%
    \setlength\tabcolsep{0pt}%
    \put(0,0){\includegraphics[width=\unitlength,page=1]{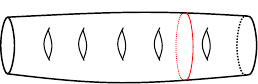}}%
    \put(0.69382912,0.29916396){\color[rgb]{0,0,0}\makebox(0,0)[lt]{\lineheight{1.25}\smash{\begin{tabular}[t]{l}\textbf{$c$}\end{tabular}}}}%
  \end{picture}%
\endgroup%
}
		\caption{In red, a separating simple closed curve in $\Si_f$ which, by 
		the homological coherence property, has vanishing winding number.}
		\label{fig:sep_4_1}
	\end{figure}
	
\end{example}

\begin{remark}\label{rem:not_bijective}
The previous example shows that the geometric variation operator $\Var_f$ is 
not a bijection. This produces a contrast with the classical theorem  
\cite[Theorem 2.2]{AGZVII} that 
$V_f$ is an isomorphism. Observe that $[c] \neq 0$ in absolute homology and 
so $V_f^{-1}([c]) \neq 0$ in relative homology. In particular by 
\Cref{lem:representing_arcs}, it is possible to represent $V_f^{-1}([c])$ by 
a disjoint union of properly embedded arcs. The previous 
\Cref{ex:separating_wn0} 
shows that one must use at least $2$ arcs. 

A similar version of this phenomenon is observed in the last section of 
\cite{BCCJ23} where {\em difficulties} are found for a geometric vanishing 
cycle 
to be 
the image of a single arc by the variation operator.
\end{remark}

\section{Vanishing cycles and arcs}
\label{sec:vanishing_cycles_arcs}
In this section, we recall the necessary definitions of versal unfolding to 
properly introduce algebraic and geometric vanishing cycles. We compare these 
two notions  via \Cref{lem:geom_finer} showing that geometric vanishing cycles 
contain strictly more information.

Finally, we introduce the notion of {\em vanishing arcs} (and its geometric 
version) as the counterpart in 
relative homology of a vanishing cycle.

\subsection*{The versal deformation space and the geometric monodromy 
	group}\label{subsection:versal}

\subsubsection*{The versal unfolding} We briefly recall here the notion 
of the 
{\em versal unfolding} of an isolated singularity; see \cite[Chapter 
3]{AGZVII} for more details.
Let $g_1, \dots, g_\mu \in \C[x,y]$
be polynomials that project to a basis of $A_f = \frac{\C\{x,y\}}{(\partial f/ 
\partial x, \partial f/\partial y)}$, assume 
that $g_1=1$. For $\lambda = 
(\lambda_1, \dots, \lambda_\mu) \in \C^\mu$, define the function $f_\lambda$ by
\[
f_\lambda = f + \sum_{i = 1}^\mu \lambda_i g_i.
\]
The {\em base space of the versal unfolding } of $f$ is the parameter space of 
all $\lambda$ which is naturally isomorphic to $\C^\mu$. The {\em discriminant 
	locus} is the 
subset
\[
\mbox{Disc}  = \{\lambda \in \C^\mu \mid f_\lambda^{-1}(0) \mbox{ is not 
	smooth}\}.
\]
The discriminant $\mbox{Disc}$ is an irreducible algebraic hypersurface.  The 
smooth part of $\mbox{Disc}$
parametrizes curves with a single node.  Denote by $\mathbb{B}_f$ a small 
closed ball 
in $\C^\mu$ centered at the origin.  Define
\begin{equation}\label{eq:taut}
	X_f = \{(\lambda, (x,y)) \mid (x,y) \in f_\lambda^{-1}(0),\ \lambda \not 
	\in \mbox{Disc}\}.
\end{equation}
Then, for $\mathbb{B}_f$ small enough and after intersecting $X_f$  with a 
sufficiently small closed polydisk, this family has the structure of a smooth 
surface bundle with base $\mathbb{B}_f \setminus \mbox{Disc}$ and fibers 
diffeomorphic 
to the Milnor fiber $\Si_f$ of the Milnor fibration.
We fix a point in $\mathbb{B}_f \setminus \mbox{Disc}$ and we denote, also by 
$\Si_f$, 
the fiber with boundary lying over it.

\begin{definition}\label{def:geom_monodromy_group}
	The {\em geometric monodromy group} is the image in $\Mod(\Sigma_f)$ of the 
	monodromy representation $
	\rho: \mathbb{B}_f \setminus \mbox{Disc} \to \Mod(\Si_f)
	$ of the universal family $X_f$ of \cref{eq:taut}.
\end{definition}

\begin{definition}\label{def:vanishing_cycle}
	A  {\em geometric  vanishing cycle}  is a 
	simple closed curve $c \subset \Si_f$ that 
	gets contracted to a point when transported to the nodal curve lying over a 
	smooth point of the discriminant $\mbox{Disc}$ of the versal unfolding of 
	$f$. Its class in the homology group $H_1(\Si_f; \ZZ)$ is called {\em 
	algebraic vanishing cycle} or simply a {\em vanishing cycle}.
\end{definition}

\begin{remark}
	It is a consequence of the irreducibility of the discriminant that the set 
	of geometric vanishing cycles forms an orbit by the geometric monodromy 
	group (see \cite{Gab}).
\end{remark}

\subsection*{Geometric vanishing cycles vs. algebraic vanishing cycles}

Here we show to which 
extent the notion of geometric vanishing cycle is finer and much more delicate 
than that of algebraic vanishing cycle. The main tool used here is 
\cite[Theorem B]{Sal} together with some basic facts from mapping class group 
theory. 

\begin{lemma}\label{lem:geom_finer}
	Let $f$ define an isolated plane curve singularity with $g(\Si_f) \geq 2$. 
	Then, for every algebraic vanishing cycle $[c]$ there exists a simple 
	closed curve $c' \in [c]$ that represents the vanishing cycle but such that 
	it is not a geometric vanishing cycle.
\end{lemma}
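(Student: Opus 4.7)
The plan is to exploit \cite[Theorem B]{Sal}, which in this framework characterizes geometric vanishing cycles among non-separating simple closed curves as exactly those whose winding number $\phi_f$ with respect to the Hamiltonian framing $\xi_f$ equals a fixed value $k_0$. Thus, to produce $c'\in[c]$ that is not a geometric vanishing cycle, it suffices to exhibit a simple closed curve in the homology class $[c]$ whose $\phi_f$-winding number differs from $k_0$. This is where the hypothesis $g(\Si_f)\geq 2$ enters: it provides enough room in $\Si_f$ to modify a geometric vanishing cycle inside its homology class while changing its winding number.

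First I would fix a geometric vanishing cycle $c$ representing $[c]$, which exists by definition of an algebraic vanishing cycle. Vanishing cycles are non-separating via the connectivity of the Dynkin diagram (recalled around \Cref{ex:separating_wn0}), so $\Si_f\setminus c$ is connected of genus $g(\Si_f)-1\geq 1$. Inside $\Si_f\setminus c$ I would then pick a subsurface $T$ of genus one whose boundary in the interior of $\Si_f$ is a single simple closed curve $b$; such a $b$ separates $\Si_f$ and is therefore null-homologous. Performing a band sum of $c$ with $b$ along a short embedded arc disjoint from both curves produces a simple closed curve $c'$ with $[c']=[c]+[b]=[c]\in H_1(\Si_f;\ZZ)$.

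Under band sum the winding number transforms by a formula of the shape $\phi_f(c')=\phi_f(c)+\phi_f(b)+\varepsilon$, where $\varepsilon\in\{\pm 1\}$ is a combinatorial correction encoding how the band meets the framing. Homological coherence (see \cite[Lemma 2.4]{CalSal}) expresses $\phi_f(b)$ in terms of $\chi(T)$ and the winding numbers of those components of $\partial\Si_f$ contained in $T$; by enlarging $T$ or iterating the band-sum construction to absorb any accidental cancellation, I can arrange $\phi_f(c')\neq k_0$. By \cite[Theorem B]{Sal} the curve $c'$ is then not a geometric vanishing cycle, completing the proof. The main obstacle is the winding-number bookkeeping: the correction $\varepsilon$ together with the homological-coherence value of $\phi_f(b)$ could a priori conspire so that the net change vanishes, and this is most delicate in the borderline case $g(\Si_f)=2$, where $\Si_f\setminus c$ has genus exactly one and the available subsurfaces $T$ are highly constrained, possibly forcing an iterated band-sum argument.
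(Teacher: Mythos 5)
Your approach is, at heart, the same as the paper's: reduce via Salter--Portilla's Theorem B to showing that some simple closed curve $c'$ homologous to $c$ has winding number $\phi_f(c')\neq 0$, and produce $c'$ by a genus-one modification inside $\Si_f$, using homological coherence \cite[Lemma 2.4]{CalSal} to control the winding number. The target winding value is $k_0=0$: a geometric vanishing cycle has zero relative winding number with respect to the Hamiltonian framing, and this implication of Theorem B holds without any genus hypothesis.

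Where you diverge is in how $c'$ is constructed and bookkept, and it is here that a genuine gap opens. The paper chooses $c'$ directly so that $c$ and $c'$ cobound an embedded genus-one subsurface $P\subset\Si_f$ (this is possible once $c$ is non-separating and $g(\Si_f)\geq 2$, by the change-of-coordinates principle). Homological coherence then gives $\phi_f(c)+\phi_f(c')=\chi(P)=-2$ (with the boundary orientations of $P$), so $\phi_f(c')=-2\neq 0$ immediately, with no room for cancellation. By contrast, you build $c'$ as a band sum $c\natural_B b$ and rely on a formula $\phi_f(c')=\phi_f(c)+\phi_f(b)+\varepsilon$ with $\varepsilon\in\{\pm 1\}$ treated as an independent parameter, which creates an apparent danger that $\phi_f(b)+\varepsilon=0$. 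That danger is illusory: your $c'$ and $c$ also cobound a genus-one subsurface (namely, a regular neighbourhood of $c\cup B\cup b$, a pair of pants, glued to $T$ along $b$), so homological coherence forces $\phi_f(c')-\phi_f(c)=\pm 2$ regardless of the band. You never establish this, so the ``absorb any accidental cancellation by enlarging $T$ or iterating'' step remains an unproved wish; worse, it cannot be carried out as stated when $g(\Si_f)=2$, since $\Si_f\setminus c$ has genus exactly one and there is no room to enlarge $T$. To close the gap, either apply homological coherence directly to the cobounding genus-one piece as the paper does, or prove the compatibility of the band-sum correction $\varepsilon$ with the orientation needed on $b$ so that $\phi_f(b)+\varepsilon$ is always $\pm 2$.
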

\begin{proof}
	Let $[c]$ be a vanishing cycle and let $c \in [c]$ be a geometric vanishing 
	cycle representing it which always exists by definition. The hypothesis 
	that 
	$g(\Si) \geq 2$ implies that $f$ is, in particular, not the singularity 
	$A_1$ 
	so $c$ is non-separating. By the Change of coordinates principle, up to an 
	element of $\Mod(\Si)$ we can assume that $c$  is as in 
	\cref{fig:alg_vs_geom} since all 
	non-separating simple closed curves are conjugate. Therefore, there exists 
	a 
	 simple closed curve $c' \in [c]$ such that $c$ and $c'$ bound a genus $1$ 
	 surface. By 
	the 
	homological coherence property \cite[Lemma 2.4]{CalSal}, we find that 
	$\phi_f(c) + \phi_f(c') = 
	2-2g-2 
	=-2$. But by \cite[Theorem B]{Sal} (note that one implication of that 
	theorem holds always without the hypothesis therein stated), $\phi_f(c)=0$ 
	so  $\phi_f(c') \neq 0$ 
	and, by the same result, the curve $c'$ is not a geometric vanishing 
	cycle.
	
	\begin{figure}
		\centering
		\resizebox{0.4\textwidth}{!}{
\begingroup%
  \makeatletter%
  \providecommand\color[2][]{%
    \errmessage{(Inkscape) Color is used for the text in Inkscape, but the package 'color.sty' is not loaded}%
    \renewcommand\color[2][]{}%
  }%
  \providecommand\transparent[1]{%
    \errmessage{(Inkscape) Transparency is used (non-zero) for the text in Inkscape, but the package 'transparent.sty' is not loaded}%
    \renewcommand\transparent[1]{}%
  }%
  \providecommand\rotatebox[2]{#2}%
  \newcommand*\fsize{\dimexpr\f@size pt\relax}%
  \newcommand*\lineheight[1]{\fontsize{\fsize}{#1\fsize}\selectfont}%
  \ifx\svgwidth\undefined%
    \setlength{\unitlength}{218.806198bp}%
    \ifx\svgscale\undefined%
      \relax%
    \else%
      \setlength{\unitlength}{\unitlength * \real{\svgscale}}%
    \fi%
  \else%
    \setlength{\unitlength}{\svgwidth}%
  \fi%
  \global\let\svgwidth\undefined%
  \global\let\svgscale\undefined%
  \makeatother%
  \begin{picture}(1,0.62173333)%
    \lineheight{1}%
    \setlength\tabcolsep{0pt}%
    \put(0,0){\includegraphics[width=\unitlength,page=1]{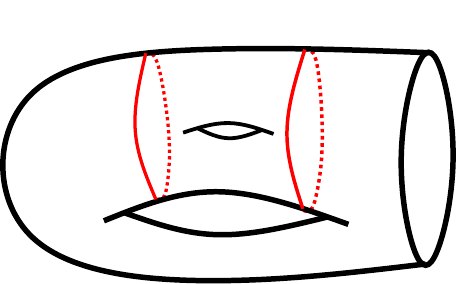}}%
    \put(0.27557254,0.57304118){\color[rgb]{0,0,0}\makebox(0,0)[lt]{\lineheight{1.25}\smash{\begin{tabular}[t]{l}\textbf{$c$}\end{tabular}}}}%
    \put(0.61990067,0.57132598){\color[rgb]{0,0,0}\makebox(0,0)[lt]{\lineheight{1.25}\smash{\begin{tabular}[t]{l}\textbf{$c'$}\end{tabular}}}}%
  \end{picture}%
\endgroup%
}
		\caption{The relative position of the curves $c$ and $c'$. If $c$ is a 
		geometric vanishing cycle, then $c'$ is not; but they represent the 
		same homology 
		class.}
		\label{fig:alg_vs_geom}
	\end{figure}
	
\end{proof}

\subsection*{Vanishing arcs} In this section we define the counterpart to 
vanishing cycles and the central object to this work.

\begin{definition}\label{def:vanishing_arc}
	We say that a class $[a] \in H_1(\Si_f,\partial \Si_f; \ZZ)$ is a 
	{\em vanishing arc} if 
	$V_{f}([\alpha])$ is a vanishing cycle. We say that a single properly 
	embedded arc $a$ is a {\em geometric vanishing arc} if $\Var_{f}(a)$ is a 
	geometric vanishing cycle.
\end{definition}

Since $V_f$ is an isomorphism, the set of vanishing arcs is, by definition the 
preimage by
$V_{f}$ of the set of vanishing cycles.

\subsubsection{Intersection numbers} We briefly recall some properties and 
notation.
Let $a, b \in \calC_{\Si_f} \cup \calI_{\Si_f}$ be two 
closed curves, properly embedded arcs or one of each. We denote by $i(a,b)$  
the geometric intersection number between $a$ and $b$, that is,
\[
i(a,b) = \min_{\substack{a' \sim a  \\b' \sim b}} \# \left(\mathring{a}' \cap 
\mathring{b}'\right)
\]
where $a \sim a'$ is the relation by isotopy in the case of closed curves and 
isotopy relative to the boundary in the case of arcs, and $\mathring{a}$ 
denotes the interior so that only intersection points happening in 
$\mathring{\Si}$ are taken into account.

\begin{remark}
	Observe that the number $i(a,b) $ is always a non-negative integer as it is 
	an 
	unsigned count of intersection points. This classical invariant has been 
	thoroughly used in the literature of mapping class groups and Teichm\"uller 
	spaces. For example the geometric intersection between two curves is 
	crucial on the definition of the curve cumplex in \cite{Harvey}, and in 
	\cite{Harer} the 
	geometric intersection between two arcs was used to define the arc complex. 
	
	For the definition of the geometric intersection between two arcs 
	it is more common to use closed surfaces with marked points or, 
	equivalently, to allow only properly embedded arcs between a finite set of 
	points on each boundary component. We do not require this in this work for 
	the definition of geometric intersection even though a more restricted 
	class of arcs is used later.
\end{remark}

\section{Characterizing vanishing arcs}
\label{sec:characterizing_arcs}
In this section we answer the question of which properly 
embedded arcs in $\calI_\Si$ are sent, by $\Var_f$ to a geometric vanishing 
cycle. This 
characterization is done purely in terms of intersection numbers and depends on 
the extension of the formulas for winding numbers of piecewise $C^1$ curves of 
the previous section. 

\begin{thm}\label{thm:single_arcs}
Let $f$ define an isolated plane curve singularity which is not of type $A_n$ 
or $D_n$ and such that $g(\Si_f) \geq 5$. Let $a \in \calI_{\Si_f}$ be a 
properly embedded arc. Then, $\Var_f(a)$ is a 
geometric vanishing cycle if and only if $i(a,\varphi_f(a))=0.$
\end{thm}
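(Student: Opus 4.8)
The strategy is to reduce the theorem to the question of whether the closed curve $\Var_f(a)=\varphi_f(a)\ast(-a)$ is simple, and then to invoke \cite[Theorem B]{Sal}. The key ingredient is a purely surface-topological lemma: if $\alpha,\beta\in\calI_{\Si_f}$ are two properly embedded arcs with the same two endpoints, put into taut position (no bigon between them, and no half-bigon along either shared endpoint), then $\beta\ast(-\alpha)$ has exactly $i(\alpha,\beta)$ transverse double points and this is the minimal self-intersection number of its free homotopy class; indeed, a singular monogon or a singular bigon of $\beta\ast(-\alpha)$ would, upon cutting its boundary at the two concatenation points, produce a bigon between $\alpha$ and $\beta$ or a half-bigon along one of the shared endpoints, contradicting tautness. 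Since $\varphi_f$ fixes $\partial\Si_f$ pointwise, the arcs $a$ and $\varphi_f(a)$ have the same endpoints, so the lemma applies with $\alpha=a$, $\beta=\varphi_f(a)$: the minimal self-intersection number of $\Var_f(a)$ equals $i(a,\varphi_f(a))$. In particular, $\Var_f(a)$ is isotopic to a simple closed curve if and only if $i(a,\varphi_f(a))=0$. This already yields necessity, since a geometric vanishing cycle is in particular a simple closed curve.

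For sufficiency, assume $i(a,\varphi_f(a))=0$; choosing good-arc representatives in taut (hence disjoint-interior) position, $c:=\Var_f(a)$ is an embedded simple closed curve, and it remains to verify the two hypotheses of \cite[Theorem B]{Sal}. First, $c$ is non-separating: by \Cref{lem:inverse_separating} the curve $c$ cannot be a non-nullhomologous separating simple closed curve, because $V_f^{-1}([c])=[a]$ is represented by the single arc $a$; and $c$ cannot be nullhomologous either, since otherwise $[a]=0$, so $a$ is a separating arc, and one checks — using the hypotheses on $f$ and the structure of its monodromy — that a separating arc cannot have disjoint interior from its monodromy image. Second, $\phi_f(c)=0$: by \Cref{rem:good_arc_not_imp} and the concatenation formula \cref{eq:def_C1}, $\phi_f(c)=\phi_f(\varphi_f(a))-\phi_f(a)$, and this vanishes because the geometric monodromy preserves the relative winding number function attached to the framing $\xi_f$, i.e.\ it lies in the associated framed mapping class group (see \cite{Sal}). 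Now \cite[Theorem B]{Sal}, which applies precisely because $f$ is not of type $A_n$ or $D_n$ and $g(\Si_f)\ge 5$, shows that $c$ is a geometric vanishing cycle.

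The main obstacle is the taut-position lemma above. Because the two endpoints shared by $a$ and $\varphi_f(a)$ lie on $\partial\Si_f$, the monogons and bigons of the closed curve $\varphi_f(a)\ast(-a)$ that run through a concatenation point must be matched carefully against \emph{half-bigons} of the pair of arcs, and one must also know that the pair can be isotoped into taut position while keeping $\varphi_f(a)$ equal to the monodromy image of the (isotoped) $a$ — this is exactly what the extension of winding numbers and the intersection-theoretic machinery of the previous sections are set up to control. A secondary delicate point, also genuinely relying on the genericity of the monodromy of $f$, is the exclusion of the case $[a]=0$ in the sufficiency argument.
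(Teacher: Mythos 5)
Your proposal follows essentially the same route as the paper: reduce to the characterization in \cite[Theorem~B]{Sal}, verify that $\Var_f(a)$ is a non-separating simple closed curve with vanishing winding number, and obtain the converse from the bigon criterion. Your ``taut position lemma'' is a cleaner packaging of the paper's use of the bigon criterion for curves and arcs, and your remark about half-bigons at the shared endpoints is a useful precision that the paper leaves implicit.

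One point deserves attention. You correctly observe that \Cref{lem:inverse_separating} only excludes the case where $\Var_f(a)$ is a \emph{non-nullhomologous} separating curve, so the nullhomologous case must be handled separately (the paper does silently gloss over this). However, your proposed fix --- asserting that ``a separating arc cannot have disjoint interior from its monodromy image,'' attributed to ``the structure of the monodromy'' --- is neither proved nor obviously true; it is precisely the kind of statement that requires an argument, since the monodromy of a singularity does fix subsurfaces close to the boundary. The clean way to exclude this case is the one already at hand in your own computation: if $\Var_f(a)$ were a nullhomologous simple closed curve it would bound a subsurface $S$ of some genus $h$ meeting no component of $\partial\Si_f$, so by the homological coherence property (\cite[Lemma~2.4]{CalSal}, used elsewhere in the paper) one would have $\phi_f(\Var_f(a)) = \pm\chi(S) = \pm(1-2h)$, an odd and hence nonzero integer, contradicting $\phi_f(\Var_f(a))=0$. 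Replacing the unproved monodromy claim by this winding-number observation closes the sufficiency direction without any new hypotheses.
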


\begin{proof}
By \cite[Theorem B]{Sal}, using the hypothesis on $f$, we have to verify that 
$\Var_f(a)$ is: (i) a simple closed curve, (ii) non-separating, and that (iii) 
$\phi_f(\Var_f(a))=0$.

The hypothesis that  $i(a,\varphi_f(a)) = 0$ implies
that $\Var_f(a)$ is homotopic to a simple closed curve. Using the hypothesis 
that $f$ defines an isolated plane curve singularity, we apply 
\Cref{lem:inverse_separating} to conlcude that $\Var_{f}(a)$ is not a 
separating curve. Since by \cite[Theorem A]{Sal}, $\phi_f$ is in the stabilizer 
of the relative isotopy class of the Hamiltonian vector field $\xi_f$, we get 
that relative winding numbers of arcs are invariant by the geometric monodromy 
and so $\phi_f(a) = \phi_f(\varphi_f(a))$. Then, applying the formula 
\cref{eq:def_C1} from
\Cref{sec:piecewise_c1} to $\Var_f(a)$ which is a concatenation of piecewise 
$C^1$ paths,
\[
\phi_f(\Var_f(a)) = \phi_f(\varphi_f(a)) \pm \pi - \phi_f(a) \mp \pi = 0.
\]
where the signs of $\pm \pi$ and $\mp \pi$ are decided by the discussion of the 
special case in \Cref{sec:piecewise_c1} and they are opposite, that is, $\pm 
\pi \mp \pi = 0.$
This finishes the first part of the proof.

Assume now that $\Var_f(a)$ is a geometric vanishing cycle and so in particular 
it is homotopic to a nonseparating simple closed 
curve.  By the bigon criterion for simple closed curves \cite[Proposition 
1.7]{Farb}, a closed curve 
can be homotoped to a simple  closed 
curve 
if and only if all the self intersections that occur, form bigons. By 
definition, $a$ has no self intersections and so $\varphi_f(a)$ 
has 
no self intersections either. Therefore, there only self intersections of 
$\Var_f(a)$ have to occur between $a$ and $\varphi_f(a)$ and so the only bigons 
that possibly appear are bigons between the two properly embedded arcs. But 
there is also a bigon criterion for properly embedded arcs \cite[Section 
1.2.7]{Farb}. We conclude 
that the homotopy that takes the curve $\Var_f(a)$ to a simple closed curve can 
be made into an homotopy fixing the boundary pointwise.
\end{proof}

\begin{remark}
Let's analyze the different situations when the hypothesis of the above theorem 
are not satisfied.

First, when the singularity is of type $A_n$ or $D_n$, the theorem of Nick 
Salter and the last author (\cite[Theorem A]{Sal}) that is crucially used in 
the proof is not true. However, in this case the authors prove another theorem 
(\cite[Theorem 7.2]{Sal}) 
characterizing geometric vanishing cycles: for $A_n$ singularities the 
geometric vanishing cycles are those
simple closed curves which are invariant (up to isotopy) by the hyperelleptic 
involution; and for $D_n$ singularities these are the simple closed curves that 
are sent to geometric vanishing cycles by the boundary capping map between 
Milnor fibers $\Si(D_n) \to \Si(A_n)$. This criterion gives a sufficient 
condition in this case: for instance, if the hyperelliptic involution $\iota$ 
sends an arc $a$ to $-\varphi(a)$ then $\Var_{f}(a)$ is a geometric vanishing 
cycle.

When the singularity is not of type $A_n$ or $D_n$ and the genus of the Milnor 
fiber is less than $5$, we expect the theorem to be true as stated but, 
technical complications arise in the proof of a result by Aaron Calderon and 
Nick Salter (\cite{CalSal}) used in the proof of \cite[Theorem A]{Sal}. 
Nevertheless, one must notice that these classes of singularities consist of a 
finite and small (only six) collection of topologically different plane curve 
singularities.
\end{remark}

\subsection*{Geometric variation operator on disjoint collections}
As \Cref{ex:separating_wn0} and \Cref{rem:not_bijective} show, this is not the 
end of the story. Next we investigate when the variation 
operator  
takes a disjoint union of properly embedded arcs to a geometric vanishing 
cycle. In this case the theorem is not a full generalization but shows that 
there are no obstructions other than the ones arising from the very properties 
of geometric vanishing cycles.

 Let $I=\{a_1 \ldots, 
a_k\}$ be a collection of disjoint properly embedded piecewise $C^1$ 
arcs $a_i \in \calI_{\Si_f}$ (recall \Cref{rem:good_arc_not_imp}). Then, we 
define 
\[
\Var_{f}(I) = \mathrm{sg}\left( \left\{\Var_{f}(a_1), \ldots, 
\Var_{f}(a_k)\right\}\right).
\]
Where, for a collection of curves $\calC$, the notation $\mathrm{sg}(C)$ 
denotes the collection of simple closed curves that result from applying the 
surgery from 
\cref{fig:surgery} to every intersection (including self-intersections) 
happening in $\bigcup C$. Furthermore, as a consequence of the formula 
\cref{eq:def_C1} and the fact that $\phi_f(-a) = -\phi_f(\varphi_f(a))$ we get, 
denoting $C=\mathrm{sg}\left( \left\{\Var_{f}(a_1), \ldots, 
\Var_{f}(a_k)\right\}\right)$,
\begin{equation}\label{eq:vanishing_wn}
	\sum_{b_i \in \mathrm{sg}(C)}\phi_f(b_i) = 0 .
\end{equation}

\begin{figure}
	\centering
	\resizebox{0.6\textwidth}{!}{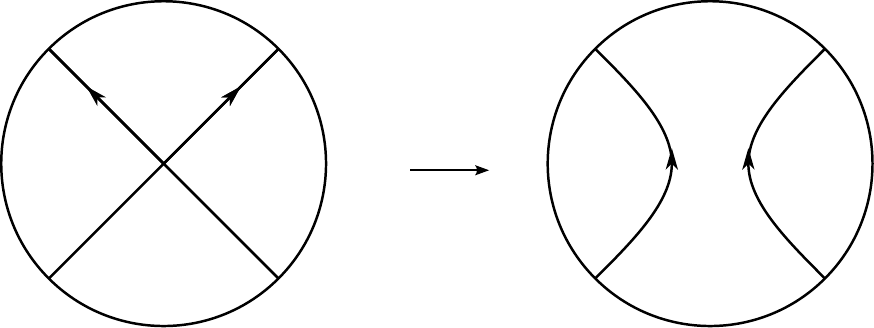}
	\caption{On the left we see a neighborhood around a point of a transverse 
		intersection 
		between two oriented segments belonging to a closed curve in a surface. 
		On the right, we see the neighborhood that 
		substitutes the previous 
		one after surgery is performed.}
	\label{fig:surgery}
\end{figure}

The extension of the definition of $\Var_{f}$ together with 
\cref{eq:vanishing_wn} and the second part of the proof of 
\Cref{thm:single_arcs} prove the following theorem.

\begin{thm}\label{thm:vanishing_arcs_collection}
	Let $I  =  \{a_1, \ldots, a_k\} \subset \calI_{\Si_f}$ be an arcset. Then
	$\Var_f(I)$ is a geometric vanishing cycle if and only if it consists of a 
	single non-separating simple closed curve. 
\end{thm}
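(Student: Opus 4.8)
The plan is to prove the two implications separately, reducing the nontrivial direction to \Cref{thm:single_arcs} and the preliminary lemmas.

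The forward direction is immediate from the definition: a geometric vanishing cycle \emph{is} a simple closed curve, and since the singularity is assumed not to be $A_1$ (this follows from $g(\Si_f) \ge 2$, which we should include as a running hypothesis, or note it is forced by the other standing assumptions), every geometric vanishing cycle is non-separating --- this is exactly the connectivity-of-the-Dynkin-diagram argument recalled just before \Cref{ex:separating_wn0}, together with the fact that separating curves have zero algebraic intersection with everything. So if $\Var_f(I)$ is a geometric vanishing cycle then it is a single non-separating simple closed curve, with nothing further to check.

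For the converse, suppose $\Var_f(I)$ is a single non-separating simple closed curve $c$; we must show $c$ is a \emph{geometric} vanishing cycle, and by \cite[Theorem B]{Sal} (invoking the standing hypothesis that $f$ is not of type $A_n$ or $D_n$ and that $g(\Si_f)$ is large enough, as in \Cref{thm:single_arcs}) it suffices to check that $c$ is a simple closed curve (given), non-separating (given), and has $\phi_f(c) = 0$. The winding-number computation is where I would spend the bulk of the argument: one applies the surgery formula. Since $\mathrm{sg}(C)$ here consists of the single curve $c$, \cref{eq:vanishing_wn} reads $\phi_f(c) = 0$ directly. The point is that \cref{eq:vanishing_wn} was derived precisely from \cref{eq:def_C1} together with $\phi_f(-a) = -\phi_f(\varphi_f(a))$ and the invariance of relative winding numbers of arcs under $\varphi_f$ (from \cite[Theorem A]{Sal}), so the $\pm\pi$ correction terms introduced at the concatenation points, and at every surgered intersection, cancel in pairs exactly as in the single-arc case. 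Thus all three conditions of \cite[Theorem B]{Sal} hold and $c$ is a geometric vanishing cycle.

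The main obstacle --- really the only one requiring care --- is verifying that the sign bookkeeping in \cref{eq:vanishing_wn} genuinely yields $\phi_f(c)=0$ when the surgery produces a single curve rather than a multi-component collection: one must confirm that each surgery (from \cref{fig:surgery}) splits one intersection angle into two corrective terms whose signs are opposite, so that summing over all corners and all surgered crossings the total correction vanishes, leaving only $\sum_i\bigl(\phi_f(\varphi_f(a_i)) - \phi_f(a_i)\bigr) = 0$. This is the content already packaged in \cref{eq:vanishing_wn}, so the proof amounts to: (i) quote the forward direction from the definition of vanishing cycle plus the Dynkin-connectivity remark; (ii) for the converse, combine \cref{eq:vanishing_wn} (which gives $\phi_f(c)=0$ since $\mathrm{sg}(C)=\{c\}$) with the non-separating and simple-closed-curve hypotheses, and conclude via \cite[Theorem B]{Sal}. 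The second part of the proof of \Cref{thm:single_arcs} --- the bigon-criterion argument showing that a homotopy to a simple closed curve can be taken rel boundary --- is not needed here since simplicity of $c$ is part of the hypothesis, but it can be cited to explain why "homotopic to a simple closed curve" and "is a simple closed curve" coincide at the level of the statement.
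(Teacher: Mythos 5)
Your proof is correct and follows essentially the same route as the paper: the forward direction is by definition plus the fact that vanishing cycles are non-separating, and the reverse direction combines \cite[Theorem B]{Sal} (hence needs the same standing hypotheses on $f$ as \Cref{thm:single_arcs}) with \cref{eq:vanishing_wn}, which specializes to $\phi_f(c)=0$ when $\mathrm{sg}(C)$ is a single curve $c$.

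One small remark worth flagging: the paper's one-sentence derivation cites ``the second part of the proof of \Cref{thm:single_arcs},'' i.e.\ the bigon-criterion paragraph, but you are right that this is not actually what is being reused here. Since $\Var_f(I)$ is by construction a collection of literal simple closed curves after surgery, there is no need to promote a homotopy to a rel-boundary isotopy via bigons; the ingredients genuinely in play are those of the \emph{first} part of that proof --- the appeal to \cite[Theorem A]{Sal} for invariance of arc winding numbers, \Cref{lem:inverse_separating} (replaced here by the non-separating \emph{hypothesis}), and the $\pm\pi$ bookkeeping encoded in \cref{eq:vanishing_wn} --- followed by \cite[Theorem B]{Sal}. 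Your proposal makes this dependency structure explicit and correct.
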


\begin{definition} \label{def:geometric_arset}
In the situation of \Cref{thm:vanishing_arcs_collection} above, that is, when 	
$\Var_f(I)$ is a geometric vanishing cycle, we say that $I$ is a geometric 
vanishing arcset.
\end{definition}
\begin{remark}
	Note that the non-separating hypothesis is necessary in this case since  
	\Cref{lem:inverse_separating} only assures that a separating simple closed 
	curve can't be the image of a {\em single} arc. Moreover, the classical 
	result that the variation operator is an isomorphism together with the 
	representation result \Cref{lem:representing_arcs}, suggest that situations 
	like the one described in
	\Cref{ex:separating_wn0} might yield a counterexample. However, we do not 
	have a proof of this at the moment.
\end{remark}

\section{Finding collections of vanishing arcs}

\label{sec:finding_arcs}
In this section we explain how to quickly produce many examples of geometric 
vanishing cycles and arcs for the Brieskorn--Pham singularities $f(x,y) = 
y^p+x^q$ with $\gcd(p,q)=1$. In order to do so, we recall a construction 
already explained in 
\cite{Nor} that gives an 
explicit model for the geometric monodromy of this singularity. In the works 
\cite{Nor,Nor2,Graf}, 
certain ribbon graphs with a metric with a special property (\tat graphs) are 
used but here we 
carry away the construction without entering into those definitions. More 
details can be found in 
the aforementioned papers. 

Let $K_{p,q}$ be the complete bipartite graph of type $p,q$. The 
Milnor fiber $\Si$ of this plane curve singularity retracts to a copy of 
$K_{p,q} \hookrightarrow \Si$. Moreover, if we take two parallel lines on the 
plane, mark $p$ points on one and $q$ points on the other and we join each 
point on one line with all the points of the other, gives an immersion of 
$K_{p,q}$ on the plane in such a way that an immersion on the plane  of 
$\Si$ is given by 
thickening the graph $K_{p,q}$.

Since 
$\gcd(p,q)=1$, then $\partial \Si$ has one boundary component. Hence, $\Si 
\setminus K_{p,q}$ is homeomorphic to $\partial \Si \times (0,1]$. Let 
$\hat{\Si}$ be the compactification $\partial \Si \times [0,1]$. This 
compactification comes with a map
\[
\sigma:\hat{\Si} \to \Si
\]
that is a homeomorphism on $\partial \Si \times (0,1]$ and is genericallly 
$2:1$ 
on $\partial \Si \times \{0\}$. Moreover, $\partial \Si \times \{0\}$ can be 
thought of as a $2pq$-gon where each edge is sent to an edge of $K_{p,q}$ by 
$\sigma$. If each edge of $K_{p,q}$ is identified with a segment of length 
$1/2$, then $\partial \Si \times \{0\} \simeq \RR/ pq \ZZ$, inherits a metric 
and a total length of 
$pq$. Then, by \cite[Examples 2.1 and 3.10]{Nor}, the diffeomorphism
\[
\begin{split}
\hat{\varphi}:\hat{\Si} &\to\hat{\Si} \\
(\theta,t) & \mapsto (\theta + (1-t) ,t)
\end{split}
\]
on the compact cylinder, induces a diffeomorphism on the Milnor 
fiber (in the cited document, every distance is scaled by a factor of $\pi$). 
In other words, the gluing map 
$\sigma|_{\partial \Si \times \{0\}}$ identifies a point $\theta$ 
with a point $\theta'$ if and only if it identifies $\hat{\varphi}(\theta)$ 
with $\hat{\varphi}(\theta')$. Look at \cref{fig:Kpq} to follow this 
discussion. Moreover, this 
induced diffeomorphism 
$\varphi_f:\Si \to \Si$ is 
(in the class of) the geometric monodromy. Therefore, an arc $a$ that 
intersects once 
transversely $K_{p,q}$ at an interior point of an edge, satisfies the 
hypothesis of \Cref{thm:single_arcs} and therefore $\Var_{f}(a)$ is a geometric 
vanishing cycle.

\begin{figure}
	\centering
	\resizebox{0.8\textwidth}{!}{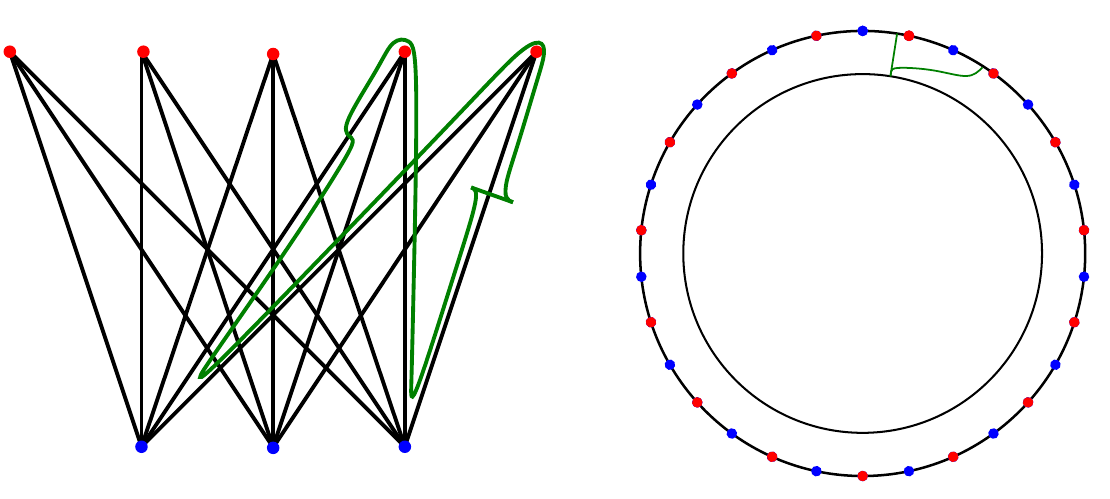}
	\caption{On the left we see the bipartite graph $K_{3,5}$. On the right the 
	surface $\hat{\Si}$ 
	resulting from cutting $\Si$ along the graph $K_{3,5} \hookrightarrow \Si$. 
	On both sides, in 
	green, a properly embedded arc transverse to the graph and its image by the 
	geometric 
	monodromy. Note that \Cref{thm:single_arcs} does not apply here since 
	$g(\Si) = 4$ but bigger bipartite graphs become too cumbersome to draw.}
	\label{fig:Kpq}
\end{figure}

A future project with the collaborator Baldur Sigur{\dh}sson will expose more 
methods to systematically find vanishing arcs as the unstable manifolds of the 
singularities of certain vector field defined on a model of the Milnor fiber.

\section{Topological exceptional collection of vanishing 
arcs}\label{sec:exceptcoll}
  Given an isolated singularity $f$, its generic Morsification $\tilde{f}$ has 
  $\mu$-critical values in $\C$. Fix a regular value $p \in \C$ and choose a
 non-overlapping paths from $p$ to $\mu$-critical values, called vanishing 
 paths. Parallel transports along vanishing paths provide  a distinguished 
 collection of geometric vanishing cycles.
 We may ask if we can find a good collection of arcs (or arcsets) whose 
 geometric variation images form a distinguished collection of geometric 
 vanishing cycles. 
 
   Recall that  a set of disjoint embedded arcs  is called an arcset. 
\begin{definition} \label{definition:linear}
An ordered arcset $\vec{K}=(c_1,\cdots,c_k)$ is called {\em linear} if
\begin{enumerate}
\item $\rho(c_i)$ and $c_i$ only intersect at the boundary points.
\item $\rho(c_{i})$ intersect  transversely $c_{i+1}$ at a single (interior) 
point and does not intersect any other $c_j$'s ($j \neq i, i+1$)
for $1 \leq i \leq \mu -1$ and $\rho(c_{\mu})$ does not intersect any others.
\end{enumerate}
\end{definition}
 The linear condition implies that $\Var_{f}(c_{i})$ is a simple closed curve 
 for any $i$ and they intersect in a successive order. More precisely, 
 $\Var_{f}(c_{i})$ and $\Var_{f}(c_{i+1})$ intersect at one point, which 
 corresponds to the unique intersection point $\rho(c_{i}) \cap c_{i+1}$, for 
 $1 \leq i \leq k-1$. Namely, $\vec{K}$ is linear if its variation image is a 
 linear chain of  $S^1$'s.
 
\begin{lemma}\label{lem:linear}
A linear arcset $\vec{K}=(c_1,\cdots,c_k)$ is a geometric vanishing arcset.
\end{lemma}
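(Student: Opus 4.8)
The plan is to reduce everything to \Cref{thm:vanishing_arcs_collection}: since $\vec K=(c_1,\dots,c_k)$ is in particular an arcset, it suffices to prove that $\Var_f(\vec K)=\mathrm{sg}(\{\Var_f(c_1),\dots,\Var_f(c_k)\})$ consists of a \emph{single non-separating simple closed curve}. Write $\delta_i:=\Var_f(c_i)=\varphi_f(c_i)\ast(-c_i)$ and recall $\rho(c_i)=\varphi_f(c_i)$. First I would observe that each $\delta_i$ is an embedded circle: condition (1) of \Cref{definition:linear} says $\varphi_f(c_i)$ and $c_i$ meet only at their common boundary endpoints, so after an isotopy rel $\partial\Si_f$ the concatenation $\varphi_f(c_i)\ast(-c_i)$ has no interior self-intersection; thus $\delta_i$ is a simple closed curve and, as a homology class, $[\delta_i]=V_f([c_i])\in H_1(\Si_f;\ZZ)$. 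Next I would record the chain structure already indicated after \Cref{definition:linear}: since the $c_i$ are pairwise disjoint, $\varphi_f$ is a homeomorphism, and condition (2) holds, one checks that $\delta_i\cap\delta_{i+1}$ is a single transverse point (the point of $\varphi_f(c_i)\cap c_{i+1}$) while $\delta_i\cap\delta_j=\emptyset$ whenever $|i-j|\ge 2$; in particular all these crossing points are pairwise distinct and there are no triple points.

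Then I would carry out the surgery. The move of \Cref{fig:surgery} is a local (oriented) resolution performed in disjoint disks around the crossing points, so it may be done one crossing at a time in any order. Resolving the crossing $\delta_{k-1}\cap\delta_k$ of two circles that meet only there produces a single circle $\delta'$, which still meets $\delta_{k-2}$ in the one surviving transverse point and is disjoint from $\delta_1,\dots,\delta_{k-3}$; hence $(\delta_1,\dots,\delta_{k-2},\delta')$ is again a linear chain of $k-1$ circles. By induction on $k$ (the case $k=1$ being trivial), resolving all crossings yields a single simple closed curve $\gamma:=\Var_f(\vec K)$.

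It remains to see $\gamma$ is non-separating. Since the oriented resolution does not change homology classes, $[\gamma]=\sum_{i=1}^k\varepsilon_i[\delta_i]$ in $H_1(\Si_f;\ZZ)$ for some signs $\varepsilon_i\in\{\pm1\}$. Assume first $k\ge 2$ and pair with $[\delta_1]$ under the intersection form of $\Si_f$: using skew-symmetry ($[\delta_1]\cdot[\delta_1]=0$), the single transverse point $\delta_1\cap\delta_2$ (so $[\delta_1]\cdot[\delta_2]=\pm1$), and $[\delta_1]\cdot[\delta_j]=0$ for $j\ge 3$, we obtain $[\gamma]\cdot[\delta_1]=\pm1\neq 0$, independently of the unknown signs $\varepsilon_i$. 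A separating curve has zero algebraic intersection with every closed curve, so $\gamma$ is non-separating. For $k=1$ one instead uses \Cref{lem:inverse_separating}: $V_f^{-1}([\gamma])=[c_1]$ is represented by a single properly embedded arc, so $[\gamma]$ is not a non-nullhomologous separating class, and since $c_1$ is essential $[\gamma]=V_f([c_1])\neq 0$; hence $\gamma$ is non-separating (this case is also covered directly by \Cref{thm:single_arcs} in its range of validity). Applying \Cref{thm:vanishing_arcs_collection} then shows $\Var_f(\vec K)$ is a geometric vanishing cycle, i.e. $\vec K$ is a geometric vanishing arcset.

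The only delicate point is the bookkeeping in the surgery step: one must check that resolving a linear chain of circles yields \emph{exactly one} circle rather than splitting off extra components, and that the crossing points stay distinct (and free of triple points) throughout the induction. By contrast, the non-separating conclusion is forced by the one-line intersection-number computation above and does not depend on tracking orientations through the resolution.
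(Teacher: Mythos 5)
Your proof is correct and follows essentially the same strategy as the paper: reduce to \Cref{thm:vanishing_arcs_collection} by showing that the successive oriented resolutions of the chain of circles $\delta_i=\Var_f(c_i)$ produce a single non-separating simple closed curve, inducting on the chain length for the ``single circle'' part. The only notable difference is the non-separating step, where the paper iterates a geometric pushoff argument at each stage while you compute $[\gamma]\cdot[\delta_1]=\pm 1$ in one shot and handle $k=1$ explicitly via \Cref{lem:inverse_separating}; both rest on the same intersection-number observation, so this is a cosmetic rather than a structural variation.
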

\begin{proof}
 
We first make the following observations.
Let $c_{1}$ and $c_{2}$ be simple closed curves intersecting transversely only 
at one point. Then both curves are non-separating because a separating circle
will intersect transversely any other circle even number of times. Also for $C 
= \{ c_{1}, c_{2}\}$, the surgery $\mathrm{sg}(C)$ is again a simple closed 
curve.
$\mathrm{sg}(C)$ is also non-separating: since  $c_{1}$ and $c_{2}$ intersect 
only at one point, a small
translation of $c_1$ (or $c_2$) would intersect $\mathrm{sg}(C)$ at one point 
only as well.
 
Moreover, $\mathrm{sg}(C_{1} \cup C_{2}) = \mathrm{sg}(\mathrm{sg}(C_{1}), 
\mathrm{sg}(C_{2}))$  for any collections $C_{1}$ and $C_{2}$.
Therefore, we can sucessively apply the above argument to prove that 
$\mathrm{sg}(\vec{K})$ is a simple closed curve which is non-separating. So we 
conclude by \Cref{thm:vanishing_arcs_collection}.
\end{proof}

We remark that the converse does not hold in general.
In our main applications, we will find linear arcsets, which are   geometric 
vanishing arcsets by the above lemma.

Now, let us introduce a notion from \cite{BCCJ23} that is analogous to a 
distinguished collection of vanishing cycles.
\begin{definition}[Topological exceptional collection] 
\label{definition:exceptional}
An ordered collection of geometric vanishing arcsets 
$(\vec{K}_1,\ldots,\vec{K}_\mu)$ is called a {\em topological exceptional 
collection} if the following holds.
\begin{enumerate}
\item Any two arcsets in the collection are disjoint from each other.
\item For any $i <j$, we have 
$$\varphi_f(\vec{K}_j) \bullet \vec{K}_i =0.$$
\item For any $i$,
$$\vec{K}_i \bullet \Var_f(\vec{K}_i) =-1.$$
\end{enumerate}
\end{definition}
\begin{remark}
In the above formulas, we  take the sum of all intersection numbers of involved 
arcs.
Intersection numbers between different arcsets are well-defined since their 
endpoints  are disjoint from each other.
\end{remark}

One justification of the above definition is the following.
\begin{cor}
The Seifert form is non-degenerate and triangular with respect to the basis 
$$\{\Var_f(\vec{K}_1),\dots, \Var_f(\vec{K}_\mu)\}.$$
\end{cor}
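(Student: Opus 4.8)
The plan is to deduce the corollary directly from the three defining properties of a topological exceptional collection in \Cref{definition:exceptional} together with the fundamental identity \cref{eq:var_seifert} relating the Seifert form, the variation operator, and the intersection pairing. Write $[v_i] = \Var_f(\vec{K}_i) = V_f([\vec{K}_i]) \in H_1(\Si_f;\ZZ)$; since each $\vec K_i$ is a geometric vanishing arcset, each $[v_i]$ is the class of a vanishing cycle, and because $V_f$ is an isomorphism \cite[Theorem 2.2]{AGZVII} and $\{[\vec K_i]\}$ is the image under $V_f^{-1}$ of the collection $\{[v_i]\}$, the set $\{[v_i]\}$ is a basis of $H_1(\Si_f;\ZZ)$ precisely when $\{[\vec K_i]\}$ is a basis of $H_1(\Si_f,\partial\Si_f;\ZZ)$; I would note this reduction but the cleanest route is to compute the Gram matrix of $\calL$ in the $[v_i]$ basis and show it is triangular with $\pm1$ on the diagonal, which forces non-degeneracy and simultaneously shows $\{[v_i]\}$ is a basis over $\ZZ$.

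The key computation is the following. By \cref{eq:var_seifert} applied with $[c]=[v_i]$, and using $V_f^{-1}([v_i]) = [\vec K_i]$, the diagonal entries are
\[
\calL([v_i],[v_i]) = [\vec K_i] \bullet [v_i] = \vec K_i \bullet \Var_f(\vec K_i) = -1
\]
by property (iii) of \Cref{definition:exceptional}. For the off-diagonal entries with $i<j$, a polarization/bilinearity argument on \cref{eq:var_seifert} gives $\calL([v_i],[v_j]) + \calL([v_j],[v_i])$-type relations, but more directly one uses the mixed form of the identity: $\calL([c],[d]) = (V_f^{-1}[c])\bullet [d]$ holds for the Seifert form in the form \cite[Theorem 2.3]{AGZVII}, so $\calL([v_j],[v_i]) = [\vec K_j]\bullet [v_i] = \varphi_f(\vec K_j)\bullet \vec K_i - \vec K_j \bullet \vec K_i$; the first term vanishes by property (ii) and the second term vanishes because the arcsets are pairwise disjoint by property (i) (an interior intersection of $\vec K_j$ with $\vec K_i$ would contradict disjointness, and boundary intersections do not contribute to $\bullet$). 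Hence $\calL([v_j],[v_i]) = 0$ for all $i<j$, so the Gram matrix is lower (or upper, depending on convention) triangular with diagonal entries $-1$, giving determinant $\pm 1$.

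From here the conclusion is immediate: a matrix that is triangular with all diagonal entries equal to $\pm 1$ has determinant $\pm 1$, hence is invertible over $\ZZ$, so $\calL$ is non-degenerate and the collection $\{\Var_f(\vec K_1),\dots,\Var_f(\vec K_\mu)\}$ is indeed a $\ZZ$-basis of $H_1(\Si_f;\ZZ)$ with respect to which $\calL$ is triangular. I would also remark that $\mu = \operatorname{rk} H_1(\Si_f;\ZZ)$ so having $\mu$ classes with non-degenerate Gram matrix forces them to be a basis; alternatively one invokes that $V_f$ is an isomorphism and $\{[\vec K_i]\}$ is a basis of the relative homology, which the triangularity with unit diagonal also certifies on the arc side since $[\vec K_i]\bullet[v_j]$ is then a unitriangular change-of-basis matrix between $\{[\vec K_i]\}$ and the dual basis of $\{[v_j]\}$ under the (perfect) intersection pairing. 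The main obstacle — really the only subtlety — is bookkeeping the sign conventions and orientations so that property (iii) lands as $\calL([v_i],[v_i]) = -1$ rather than $+1$, and confirming that the intersection pairing $[\vec K_j]\bullet[v_i]$ in \cref{eq:var_seifert} is computed by the geometric count $\vec K_j \bullet \Var_f(\vec K_i) - $ (boundary terms) so that disjointness of arcsets genuinely kills the off-diagonal contributions; once the conventions are pinned down, the argument is a two-line consequence of \cref{eq:var_seifert} and \Cref{definition:exceptional}.
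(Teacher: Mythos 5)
Your approach is essentially the paper's: invoke the mixed-form Seifert identity $\calL([c],[d]) = V_f^{-1}([c])\bullet [d]$, then read off the entries of the Gram matrix using the three defining properties of a topological exceptional collection. You helpfully supply two things the paper leaves implicit: the diagonal computation $\calL([v_i],[v_i]) = \vec K_i \bullet \Var_f(\vec K_i) = -1$ from property (iii), and the observation that a triangular integer matrix with $\pm 1$ on the diagonal is unimodular, so non-degeneracy and basis-ness come for free.

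However, the off-diagonal computation as written contains a slip that does not close. Having correctly set $\calL([v_j],[v_i]) = [\vec K_j]\bullet [v_i]$, you expand the right-hand side as $\varphi_f(\vec K_j)\bullet \vec K_i - \vec K_j \bullet \vec K_i$. But since $[v_i] = \Var_f(\vec K_i) = \varphi_f(\vec K_i) - \vec K_i$, the actual expansion of $[\vec K_j]\bullet[v_i]$ is $\vec K_j \bullet \varphi_f(\vec K_i) - \vec K_j \bullet \vec K_i$, and the first term $\vec K_j \bullet \varphi_f(\vec K_i)$ for $i<j$ is \emph{not} controlled by property (ii), which only constrains $\varphi_f(\vec K_j)\bullet \vec K_i$ (monodromy applied to the \emph{larger}-indexed arcset). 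The expression you wrote down is in fact the expansion of the transposed entry $[\vec K_i]\bullet[v_j] = \calL([v_i],[v_j])$. That is the entry the paper's proof targets, and it is the one whose vanishing for $i<j$ genuinely follows from (i) and (ii). So the fix is to compute $\calL([v_i],[v_j]) = [\vec K_i]\bullet[v_j] = \vec K_i\bullet\varphi_f(\vec K_j) - \vec K_i\bullet\vec K_j = 0 - 0 = 0$ for $i<j$, yielding lower triangularity; as stated, your chain proves the wrong triangle is zero and the key step relies on a vanishing the definition does not provide.
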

\begin{proof}
Seifert form can be defined by the intersection number between the preimage of 
a variation operator and the vanishing cycle. 
By definition, we have $\Var_f(\vec{K}_j) \bullet \vec{K}_i =0$ for $i<j$ 
because $\vec{K}_j \cap \vec{K}_i = \emptyset$.
This implies the claim.
\end{proof}
For convenience, we will not write the arrow in $\vec{K}$ and just write $K$ 
from now on. 

Here is one warning for a possible confusion. The variation image of a  
vanishing arc is a vanishing cycle of a certain Morsification of $f$.
When we have a topological exceptional collection, our definition does {\em 
not} imply that  the corresponding collection of vanishing cycles come from a 
single Morsification  of $f$.
But we conjecture that they do.
\begin{conjecture}
The ordered collection of vanishing cycles from a topological exceptional 
collection of geometric vanishing arcsets is isotopic to the distinguished 
collection of vanishing cycles of a Morsification of $f$.
\end{conjecture}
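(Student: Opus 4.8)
The plan is to prove the conjecture by propagating the divide case \Cref{thm:maina} around a single orbit of a Hurwitz-type action of the braid group $B_\mu$. On one side, the distinguished collections of geometric vanishing cycles coming from the various Morsifications of $f$ form, classically, a single orbit of $B_\mu$ acting by Hurwitz mutations (together with the global reparametrisations that may change the Morsification); on the other side I want to equip the set of topological exceptional collections of geometric vanishing arcsets with a compatible $B_\mu$-action. Granting both ingredients the argument is short. By A'Campo's theorem every isolated plane curve singularity admits a Morsification encoded by a divide $D$, so \Cref{thm:maina} supplies a topological exceptional collection $K^{D}=(K_1^{D},\dots,K_\mu^{D})$ with $\Var_f(K_i^{D})$ isotopic to the $i$-th member of the distinguished collection attached to $D$. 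If $K=(K_1,\dots,K_\mu)$ is an arbitrary topological exceptional collection, transitivity provides a braid $\beta$ with $\beta\cdot K^{D}\simeq K$, and $\Var_f$-equivariance then forces $\Var_f(K)\simeq\beta\cdot\Var_f(K^{D})$. Since a Hurwitz mutation of a distinguished collection of geometric vanishing cycles is again a distinguished collection of (a priori, a different) Morsification of $f$, we conclude that $\Var_f(K)$ is distinguished; the fact that the Morsification may change along the way is precisely the phenomenon pointed out in the warning preceding the conjecture.

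So the real content is to construct the $B_\mu$-action on topological exceptional collections and to prove it transitive. For $1\le i\le\mu-1$ I would define a mutation $\sigma_i$ that reindexes $K_i,K_{i+1}$ and replaces one of them, say $K_{i+1}$, by $\tau_{\Var_f(K_i)}(K_{i+1})$, where the Dehn twist $\tau_c$ along the simple closed curve $c=\Var_f(K_i)$ is taken supported in a collar of $c$ so that it acts on arcsets as well as on curves. Three points then have to be checked. First, that $\tau_{\Var_f(K_i)}$ lies in the geometric monodromy group and carries geometric vanishing arcsets to geometric vanishing arcsets: the natural route is to verify that such a twist commutes with $\varphi_f$ up to the prescribed boundary conjugation, so that $\Var_f(\tau_c(a))=\tau_c(\Var_f(a))$, and then to invoke \Cref{thm:vanishing_arcs_collection} together with the fact that $\tau_c$ permutes the set of geometric vanishing cycles. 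Second, that $\sigma_i(K)$ again satisfies the three conditions of \Cref{definition:exceptional}: this is where the intersection bookkeeping lives, and one uses the one-intersection pattern of a topological exceptional collection, the surgery description of $\Var_f$ on arcsets, and the Picard--Lefschetz transformation rule to see that the triangular pattern is merely shifted. Third, that $\Var_f$ intertwines $\sigma_i$ with the classical Hurwitz move on vanishing cycles, which is again the Picard--Lefschetz rule read on the curve side. I would also record here that the three conditions of \Cref{definition:exceptional} do \emph{not} by themselves yield the factorisation $\tau_{\Var_f(K_\mu)}\cdots\tau_{\Var_f(K_1)}=\varphi_f$ required of a distinguished collection, so a direct construction of the Morsification from $K$ is not available; that factorisation is, however, $B_\mu$-invariant and holds for the divide model, which is a further reason to route the argument through the orbit rather than attempt a direct construction.

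Transitivity of this action is where I expect the main difficulty. Its categorical analogue --- that all full exceptional collections in the directed Fukaya--Seidel category are mutation-equivalent --- is a theorem of Seidel, and the conditions of \Cref{definition:exceptional} are precisely its Euler-characteristic shadow; but a purely topological proof has to work with isotopy classes of properly embedded arcs and the curves obtained from them by surgery, controlling geometric intersection numbers under each $\sigma_i$ and excluding the appearance of superfluous components. I would attempt this by induction on $\mu$ and on $g(\Si_f)$: after a sequence of mutations, normalise the position of $K_\mu$ (or $K_1$) so that it matches the corresponding arcset of the divide model, then cut $\Si_f$ along a regular neighbourhood of $\Var_f(K_\mu)$ and apply the inductive hypothesis to what remains, using a change-of-coordinates argument on the complementary surface to set up the reduction --- the delicate point being that a sub-collection of a topological exceptional collection need not obviously live on the Milnor fiber of another singularity.

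A subsidiary but possibly independent obstacle is a fullness statement: one must know that an arbitrary topological exceptional collection ``fills enough'' of $\Si_f$ to be connected to the divide model. The corollary that the Seifert form is unipotent and triangular, hence unimodular, in the basis $\{\Var_f(K_i)\}$ already forces these classes to be a $\ZZ$-basis of $H_1(\Si_f;\ZZ)$ and therefore, by Gabrielov's transitivity of distinguished bases, a distinguished \emph{homology} basis; promoting this from homology to isotopy classes of curves, compatibly with the arcset refinement, is exactly the gap the conjecture asserts can be closed, and I would expect its resolution to consume the bulk of the work. I would begin the program by establishing the equivariance identity $\Var_f(\tau_c(a))=\tau_c(\Var_f(a))$ and the stability of \Cref{definition:exceptional} under a single mutation (the second paragraph above), since these are concrete and independent of the hard transitivity input, and only then attack the induction sketched in the third paragraph.
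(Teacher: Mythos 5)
The statement you are addressing is a \emph{conjecture} in the paper, which the authors explicitly leave open; \Cref{thm:maina} is presented only as ``one example supporting the conjecture,'' covering the single topological exceptional collection built from an A'Campo divide. So there is no proof in the paper to compare against, and your proposal has to be judged on its own terms. As a research program your outline is sensible, and you are commendably honest that the transitivity of the proposed braid action is the crux and is left open. But the steps you flag as ``concrete and independent of the hard transitivity input'' already break down. The equivariance identity $\Var_f(\tau_c(a))=\tau_c(\Var_f(a))$ on which your mutation $\sigma_i$ rests is false in general. Unwinding the definitions, $\Var_f(\tau_c(a))=\varphi_f(\tau_c(a))\ast(-\tau_c(a))$ while $\tau_c(\Var_f(a))=\tau_c(\varphi_f(a))\ast(-\tau_c(a))$, and these agree precisely when $\varphi_f$ commutes with $\tau_c$. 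Since $\varphi_f\tau_c\varphi_f^{-1}=\tau_{\varphi_f(c)}$, commutation requires $\varphi_f(c)$ to be isotopic to $c$, which fails for a typical geometric vanishing cycle: already for $A_2$, with $\varphi_f=\tau_{V_1}\tau_{V_2}$ and $i(V_1,V_2)=1$, one computes $\varphi_f(V_1)\simeq -V_2\not\simeq V_1$. So $\sigma_i$ as defined does not even preserve the property that $\Var_f$ of the mutated arcset is a geometric vanishing cycle, and the intertwining with the classical Hurwitz move does not hold without a different (and as yet unspecified) definition of the mutation.

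A second, more structural problem is that your $\sigma_i$ touches only $K_i$ and $K_{i+1}$ while twisting along $c=\Var_f(K_i)$, but \Cref{definition:exceptional} does not force $c$ to be disjoint from $K_j$ for $j>i+1$: the axioms control $K_j\bullet K_i$ and $\varphi_f(K_j)\bullet K_i$, not $K_j\bullet\Var_f(K_i)$, so the twist can move the other arcsets and violate the disjointness axiom (i). Your own remark that the definition does not yield the factorisation $\tau_{\Var_f(K_\mu)}\cdots\tau_{\Var_f(K_1)}=\varphi_f$ is well taken and points to the same underlying weakness: the data recorded in \Cref{definition:exceptional} is an Euler-characteristic shadow and may be too coarse to support a well-defined $B_\mu$-action on the nose. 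Combined with the transitivity and ``fullness'' gaps that you already acknowledge (and which you correctly identify as where the real work lies), the proposal in its current form does not establish the conjecture; even the preparatory lemmas you propose to start with would need to be reformulated before the program can begin.
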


The next theorem  provides one example supporting the conjecture.

%

\begin{thm}\label{thm:maina}
Given any A'Campo divide of a plane curve singularity,
we have a topological exceptional collection of geometric vanishing arcsets.
Furthermore, their variation images form a distinguished collection of 
vanishing cycles that are described by A'Campo \cite{Areal} (up to isotopy).
\end{thm}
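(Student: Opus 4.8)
\section*{Proof proposal}

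The plan is to construct the collection $(K_1,\dots,K_\mu)$ explicitly from the combinatorics of the divide and then to check, one by one, the three conditions of \Cref{definition:exceptional} together with the identification of the variation images. I would run the whole argument as an induction on the \emph{depth} of the divide defined in \Cref{sec:divide}: the base case is depth zero, handled via the adapted families of \Cref{sec:depthzero} (where the inverse of the variation operator is explicitly available), and the inductive step uses the fact that a divide of positive depth is built from divides of strictly smaller depth. Throughout I would work inside A'Campo's model, in which the Milnor fiber $\Si_f$ is the standard planar thickening of the divide $\mathbb{D}_f$, the geometric monodromy $\varphi_f$ admits an explicit description read directly off $\mathbb{D}_f$ (the collar-rotation monodromy of the $K_{p,q}$ picture in \Cref{sec:finding_arcs} being the simplest instance), and the distinguished collection of vanishing cycles of \cite{Areal} is indexed by the vertices of the A'Campo--Gusein-Zade diagram $A\Gamma(\mathbb{D}_f)$, with edges recording adjacency.

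The first concrete step is to produce the arcsets. For each edge of $A\Gamma(\mathbb{D}_f)$ I would define a \emph{basic arc} as in \Cref{sec:basic}: a short properly embedded arc meeting the divide transversally in one controlled point, chosen so that its monodromy image is understood explicitly and can be made disjoint from it in the interior of $\Si_f$. Such a basic arc $c$ has $i(c,\varphi_f(c))=0$, so $\Var_f(c)$ is homotopic to a simple closed curve, and since $V_f$ is an isomorphism this curve is non-nullhomologous, hence non-separating by \Cref{lem:inverse_separating}; so $c$ is a geometric vanishing arc whose variation image is the vanishing cycle attached to the corresponding vertex. To the $i$-th vanishing cycle of the distinguished collection I then attach the arcset $K_i$ obtained, as in \Cref{sec:arcsetgeneral}, by collecting basic arcs along a \emph{good path} in $A\Gamma(\mathbb{D}_f)$. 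The good paths are chosen so that the ordered tuple of basic arcs forming $K_i$ satisfies the linearity hypotheses of \Cref{definition:linear} --- consecutive basic arcs have monodromy images crossing once and non-consecutive ones do not cross --- so \Cref{lem:linear} applies and shows that $K_i$ is a geometric vanishing arcset with $\Var_f(K_i)$ a single non-separating simple closed curve. Carrying out the surgery of \Cref{thm:vanishing_arcs_collection} explicitly in A'Campo's model identifies $\Var_f(K_i)$, up to isotopy, with A'Campo's vanishing cycle, which is the ``Furthermore'' part of the statement.

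It remains to verify the three conditions of \Cref{definition:exceptional}. Disjointness of distinct arcsets is arranged by taking the good paths pairwise disjoint away from their common endpoints on $\partial\Si_f$. The normalization $K_i \bullet \Var_f(K_i) = -1$ is a local computation coming from the single transverse crossing that defines the basic arcs; equivalently it records the self-Seifert pairing of the vanishing cycle through \cref{eq:var_seifert}. The triangularity $\varphi_f(K_j)\bullet K_i = 0$ for $i<j$ is where the ordering must be used: the vanishing-path ordering of A'Campo's distinguished collection corresponds to a coherent ordering of the vertices of $A\Gamma(\mathbb{D}_f)$, and one checks, using the explicit action of $\varphi_f$ on basic arcs in the model, that $\varphi_f$ pushes every basic arc of $K_j$ off every basic arc of $K_i$ whenever $i<j$. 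For depth zero this is precisely what the adapted families make transparent; for positive depth one transports the statement through the gluing that assembles $\mathbb{D}_f$ from lower-depth divides, which is where the induction is spent.

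The main obstacle is the triangularity condition in the positive-depth case. The geometric monodromy of an isolated singularity moves everything around --- the same phenomenon that makes the classical $V_f$ an isomorphism --- so controlling $\varphi_f(K_j)$ globally, when $K_j$ is a union of many basic arcs threading the whole Milnor fiber, is delicate and cannot be reduced to a purely local check. This is exactly where the combinatorics of good paths in $A\Gamma(\mathbb{D}_f)$ and the depth induction do the real work; by contrast, once the model and the basic arcs are set up, linearity, disjointness, and the normalization $K_i\bullet\Var_f(K_i)=-1$ are essentially local verifications.
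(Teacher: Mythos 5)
Your overall skeleton matches the paper (basic arcs indexed by edges of $A\Gamma(\mathbb{D}_f)$, arcsets assembled along good paths, linearity giving geometric vanishing arcsets, disjointness by perturbation, and explicit surgery to identify variation images), but there is a genuine gap at the step you yourself flag as ``the main obstacle'': you have no concrete route to the triangularity $\varphi_f(K_j)\bullet K_i = 0$ for $i<j$, and the route you sketch is not the one that works. You propose an induction on depth, and you describe the check as showing ``$\varphi_f$ pushes every basic arc of $K_j$ off every basic arc of $K_i$.'' Neither of these matches the paper, and the second is actually stronger than what is true: $\varphi_f(K_j)$ and $K_i$ need not be geometrically disjoint; only the signed count must vanish. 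The depth induction in the paper is spent entirely on \emph{defining} the good paths (\Cref{definition:paths}); the verification that the resulting family is exceptional is not inductive.

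The missing idea is the reduction through \Cref{definition:adapted} and \Cref{prop:excep}. Rather than computing $\varphi_f(K_j)\bullet K_i$ directly (which, as you say, is hard to control globally because $\varphi_f$ moves everything), the paper shows the collection is \emph{adapted}, i.e.\ controls the pairings $K_j\bullet V_i$ of arcsets against A'Campo's vanishing cycles. This is a purely combinatorial computation in the A'Campo model: each basic arc's intersections with vanishing cycles are governed by the ``relevant vertices'' of its edge (\Cref{lem:+0-}), and along a good path the contributions telescope so that only the intersections near the terminal vertex survive (\Cref{lem:canceled}, \Cref{thm:adapted}). Once adaptedness is in hand, \Cref{prop:excep} converts it into the exceptional conditions via the homological identity $[V_i] = [\varphi_f(\overline{K}_i)] - [\overline{K}_i]$ together with disjointness of the arcsets. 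That algebraic conversion is precisely what lets one avoid the delicate global control of $\varphi_f(K_j)$ that your proposal stalls on. Without that reduction, the triangularity step in your write-up is not a proof, just a restatement of the difficulty.

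Two smaller inaccuracies worth fixing: (1) the normalization $K_i\bullet\Var_f(K_i)=-1$ is not a direct ``single transverse crossing'' computation; in the paper it comes from the adapted condition $K_i\bullet V_i = 1$ together with the orientation reversal $\overline{K}_i$ in \Cref{prop:excep}. (2) For a single basic arc you argue via \Cref{thm:single_arcs}, but the arcsets for positive-depth vertices have several components, so what is actually needed is linearity (\Cref{definition:linear}) and \Cref{lem:linear}/\Cref{thm:vanishing_arcs_collection}; the nontrivial input there is \Cref{lem:good}, which pins down exactly which pairs of monodromy images and basic arcs meet and with what sign, and this is what makes the linear order in \Cref{prop:linear} work out.
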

In the rest of the paper, we prove the above theorem.
%

\section{Prelimiaries on A'Campo divide}\label{sec:divide}
In this section, we recall the notion of A'Campo divide ( \cite{ACampo1975}, 
\cite{Areal}).
\subsection*{A'Campo divide}
Let $f : \mathbb{C}^{2} \to \mathbb{C}$ be a totally real isolated plane curve 
singularity. i.e. $f$ can be written as a product of irreducible real factors 
(as complex functions). Then there exists a deformation of each factor so that 
their product gives a real Morsification $\{ f_{t} \}_{0 \leq t \leq t_{0}}$ of 
$f$ where $t_{0}$ is a sufficiently small positive real number. The existence 
of real Morsifications for totally real plane curve singularities is a 
classical result independently proven by Norbert A'Campo \cite{ACampo1975} and 
by \cite{GZ74}. It is remarkable that it is still a conjecture the existence of 
real Morsifications for real plane curves, that is, when $f$ is a real 
polynomial that does not factor into its irreducible components as a complex 
polynomial over the real (see \cite{Lev18} for some advances made in this 
direction).

\begin{definition}
Fix one real Morsification $\{ f_{t} \}_{0 \leq t \leq t_{0}}$ of $f$. Then, an 
{\em A'Campo divide} $\mathbb{D}_{f}$ of $f$ is defined to be
$$\mathbb{D}_{f} := f_{t_{0}}^{-1}(0) \cap B_{\epsilon}(0) \hookrightarrow 
B_{\epsilon}(0)$$
for some positive real number $\epsilon$ satisfying the following equation:
$$\mu = 2d - r + 1$$
where $\mu$ is the Milnor number of $f$, $d$ is the number of double points in 
the interior of $B_{\epsilon}(0) \subset \RR^2$, and $r$ is the number of 
irreducible factors of $f$.
\end{definition}
We remark that for a given $f$, a different choice of  $\{ f_{t} \}$  may 
result in a different A'Campo divide. We will choose and fix it.

\begin{definition}[$A\Gamma$ diagram \cite{GZ1974},  \cite{ACampo1975}] 
\label{definition:AG}
Let $\mathbb{D}_{f}$ be an A'Campo divide of $f$. The complement 
$B_{\epsilon}(0) \setminus f_{t_{0}}^{-1}(0)$ consists of finitely many 
components. Among them, a component whose boundary does not intersect $\partial 
B_{\epsilon}(0)$ is called a bounded region. Then, we call any bounded region a 
$+$ or $-$ region according to the value of $f_{t_{0}}$ on that component. The 
A'Campo--Gusein-Zade diagram ($A\Gamma$ diagram for short) 
$A\Gamma(\mathbb{D}_{f})$ of $\mathbb{D}_{f}$ is defined to be the planar graph 
as follows.
\begin{enumerate}
\item A set of vertices: three kinds of vertices.
\begin{itemize}
\item Each double point in $\mathbb{D}_{f}$ gives a $0$ vertex.
\item Each bounded region of $\mathbb{D}_{f}$ gives a $+$ or $-$ vertex 
depending on the value of $f_{t_{0}}$.
\item No vertex for any unbounded region (connected component which is not 
bounded).
\end{itemize}
\item A set of edges:
\begin{itemize}
\item There is an edge between a $+$ vertex and a $-$ vertex if the 
intersection between two closures of corresponding $+$ region and $-$ region is 
a line segment (connecting two double points). 
\item There is an edge between a 0 vertex and a $+$ (or $-$) vertex if the 
corresponding double point in $\mathbb{D}_{f}$ contained in the closure of 
corresponding $+$ (or $-$) region. 
\item No edge between vertices of the same type.
\end{itemize}
\end{enumerate}
For a vertex $v$ of $A\Gamma(\mathbb{D}_{f})$, we denote its type (+, 0, or -) 
by $|v|$.
\end{definition}

Let $n_{-}, n_{0}$, and $n_{+} \geq 0$ be the number of $-$, $0$, and $+$ 
vertices in $A\Gamma(\mathbb{D}_{f})$, respectively. 
We can choose the ordering on the set of these vertices such that
\begin{itemize}
\item the order is chosen arbitrarily among the vertices of the same type and
\item $- \mbox{ vertices} < 0 \mbox{ vertices} < + \mbox{ vertices}$ between 
vertices of different types.
\end{itemize}
Then, we have the following ordered set of vertices:
\begin{equation}\label{eq:vertices}
\{v^{-}_{1}, \dots, v^{-}_{n_{-}}, v^{0}_{1} ,\dots, v^{0}_{n_{0}}, v^{+}_{1}, 
\dots, v^{+}_{n_{+}} \}.
\end{equation}

\begin{figure}
	\centering \includegraphics[scale=0.5]{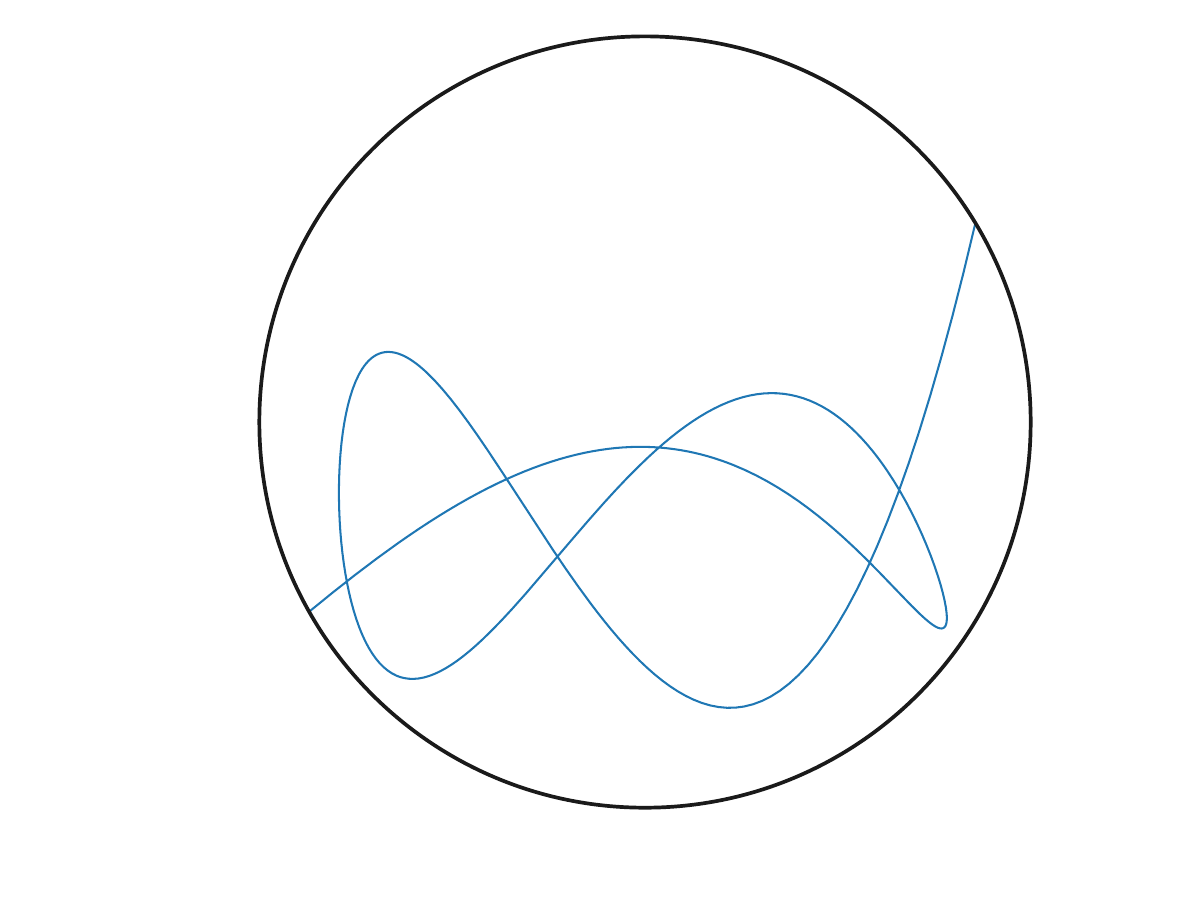}
	\caption{An example of a divide for the plane curve defined by $-x^{8} - 
	x^{7} - 3 x^{5} y + y^{3}$ using Gusein-Zade method via Chebyshev 
	polynomials.}
	\label{fig:divide_478}
\end{figure}

\begin{thm}[\cite{Areal}]\label{thm:ACampo}
Given a divide $\mathbb{D}_{f}$ of $f$, there exists a set of vanishing paths 
such that 
the corresponding distinguished collection of vanishing cycles 
$$\overrightarrow{V}_{f} = (V^{-}_{1}, \dots, V^{-}_{n_{-}}, V^{0}_{1}, \dots, 
V^{0}_{n_{0}}, \dots, V^{+}_{1}, \dots, V^{+}_{n_{+}})$$ in the Milnor fiber of 
$f$ satisfies the following properties:
\begin{enumerate}
\item The ordered set of vertices \eqref{eq:vertices} of $A\Gamma 
(\mathbb{D}_{f})$ corresponds to the 
distinguished collection $\overrightarrow{V}_{f}$ of vanishing cycles for this 
set of vanishing paths.
\item Two vanishing cycles intersect exactly at one point if and only if there 
is an edge connecting the corresponding two vertices in $A\Gamma 
(\mathbb{D}_{f})$.
Moreover, one can orient vanishing cycles so that we get
$$V^+_{i} \bullet V^0_{j} =  V^0_{j} \bullet V^-_{k}= V^+_{k} \bullet V^-_{i}= 
+1$$
for any $1 \leq i \leq n_{+}$, $1 \leq j \leq n_{0}$, and $1 \leq k \leq n_{-}$ 
whenever they intersect.
\end{enumerate}
When we regard the order only, we omit types and denote the vanishing cycles by 
just $(V_{1}, \dots, V_{\mu})$.
\end{thm}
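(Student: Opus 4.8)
The statement is A'Campo's theorem, so the plan is to reconstruct A'Campo's argument \cite{Areal,ACampo1975} in the language of Lefschetz fibrations. First I would complexify the real Morsification: the polynomial $f_{t_0}:\C^2\to\C$, restricted to the Milnor ball $\mathbb{B}_\epsilon$, is a holomorphic map with $\mu$ nondegenerate critical points, and by the defining property of a real Morsification these are exactly the $d$ saddle points of $f_{t_0}|_{\RR^2}$ at the double points of $\mathbb{D}_f$, all with critical value $0$; the $n_+$ local maxima, one in each $+$-region, with positive real critical values; and the $n_-$ local minima, one in each $-$-region, with negative real critical values. Thus $\mu = d + n_+ + n_-$, which is consistent with $\mu = 2d-r+1$ via an Euler-characteristic count for $\mathbb{D}_f$. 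After a tiny real perturbation of $t_0$ — which changes neither the isotopy type of $\mathbb{D}_f$ nor of $A\Gamma(\mathbb{D}_f)$ — I may assume the $\mu$ critical values are distinct and lie along the real axis, the $n_-$ negative ones first, then the $d$ values clustered near $0$, then the $n_+$ positive ones. Taking the base point $p$ to be a large positive real number and the vanishing paths to run monotonically along $\RR$ toward each critical value, approached from the upper half-plane in the standard way, produces a distinguished collection whose order is forced to be $(V^-_1,\dots,V^-_{n_-},V^0_1,\dots,V^0_{n_0},V^+_1,\dots,V^+_{n_+})$, which is precisely the ordering of \eqref{eq:vertices}; this gives part (i).

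For part (ii) I would set up A'Campo's geometric model of the Milnor fiber. The regular fiber $\Si_f = f_{t_0}^{-1}(\delta)\cap\mathbb{B}_\epsilon$ for small $\delta>0$ has, as its real points, the smooth affine curve $\{f_{t_0}=\delta\}\cap\RR^2$, which is $C^0$-close to $\mathbb{D}_f$ but pushed off it into the region where $f_{t_0}>0$: near each edge of $\mathbb{D}_f$ it runs parallel to that edge inside the adjacent $+$-region, and near each local maximum it is a small oval. The full complex fiber is diffeomorphic to a regular neighborhood of this real locus, suitably doubled over the $-$-regions and the double points; concretely this exhibits $\Si_f$ as a plumbing of annuli and pairs-of-pants along $A\Gamma(\mathbb{D}_f)$, one circle factor per vertex and one band per edge. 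The candidate vanishing cycles are then: for a $\pm$-vertex corresponding to a bounded region $D$, the core circle over $\partial D$; and for a $0$-vertex corresponding to a double point $q$, the small circle over $q$, i.e. the vanishing cycle of the local $A_1$ saddle.

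The analytic heart is to verify that these candidates are the actual vanishing cycles of the chosen vanishing paths. For a saddle this is the classical Picard--Lefschetz local model. For an extremum at a region $D$ I would track parallel transport from the base point toward the critical value $c$: the local Milnor fiber of the extremum is a vanishing circle, and as the value moves from $\delta$ toward $c$ this circle stays inside the "bump" of $f_{t_0}$ over $D$ and, since $f_{t_0}=0$ exactly on $\partial D$, spreads out over all of $D$, so the vanishing cycle is isotopic to the core over $\partial D$ (the minimum case is symmetric). With the cycles located over the regions and double points, the intersection pattern is combinatorial: two cycles meet iff their supports in $\mathbb{D}_f$ are adjacent, i.e. a $+$-region and a $-$-region sharing a boundary segment (where the two sheets of $\Si_f$ cross once transversally), or a double point lying on the closure of a region (where the local $A_1$ circle crosses the region circle once) — and these are exactly the edges of $A\Gamma(\mathbb{D}_f)$; cycles with non-adjacent supports (two double points, two regions of the same sign, or a region not containing a given double point) can be isotoped apart. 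Finally, orienting each region circle counterclockwise and each $A_1$-circle by the complex orientation of its local fiber, a sign check at each of the three intersection types yields the normalization $V^+_i\bullet V^0_j = V^0_j\bullet V^-_k = V^+_k\bullet V^-_i = +1$.

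The main obstacle, exactly as in A'Campo's original work, is the middle step: rigorously constructing the diffeomorphism between $f_{t_0}^{-1}(\delta)\cap\mathbb{B}_\epsilon$ and the divide model, and tracking the vanishing cycles through the degeneration $\delta\to 0^+$. This calls for a patched gradient-flow/Morse-theoretic argument over the base disk together with the complex-conjugation symmetry of the real Morsification, and it is the technical content that makes this a named theorem; the bookkeeping of intersection numbers and of the orientation normalization in part (ii) is then routine.
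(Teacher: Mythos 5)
The paper does not prove this theorem; it cites it as A'Campo's result from \cite{Areal} and then, in the subsequent subsection, only \emph{recalls} A'Campo's combinatorial model of the Milnor fiber (the building blocks $F_v$, the gluing along edges of the divide, the vanishing cycles drawn as green/red/blue curves) as a tool for the later constructions. So there is no in-paper argument against which to compare; your proposal is a blind reconstruction of A'Campo's own proof, and the overall strategy you describe --- complexify the real Morsification, read off the critical points from the divide, choose real vanishing paths ordered along $\RR$, and then identify the vanishing cycles with cores of cylinders and boundary curves in a plumbed model of the fiber --- is indeed the route taken in \cite{Areal} and \cite{GZ1974}.

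One step in your setup is wrong as written, and it is not merely cosmetic. You claim that ``a tiny real perturbation of $t_{0}$ --- which changes neither the isotopy type of $\mathbb{D}_{f}$ nor of $A\Gamma(\mathbb{D}_{f})$ --- I may assume the $\mu$ critical values are distinct.'' This is impossible: as long as $f_{t}^{-1}(0)\cap B_\epsilon$ is a divide with $d$ nodes, those $d$ nodes are nondegenerate critical points of $f_{t}$ with critical value exactly $0$, so $0$ is a $d$-fold critical value for \emph{every} $t$ in the family. You cannot separate the saddle critical values while keeping $\mathbb{D}_{f}$ isotopic. What A'Campo does instead is to perturb $f_{t_0}$ \emph{off} the discriminant (a generic small additive perturbation, not a perturbation of the deformation parameter), or, equivalently, to work directly with the degenerate Morsification and define the vanishing paths and cycles by tracking the fiber as the value approaches $0$ from the upper half plane, splitting the multiple critical value only infinitesimally. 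That distinction matters because your ordering argument in part (i) silently assumes you have $\mu$ distinct real critical values available, which the divide alone does not give you.

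Beyond that, your sketch correctly identifies the geometric model and the candidate vanishing cycles, and you are candid that the real work --- constructing the diffeomorphism between $f_{t_0}^{-1}(\delta)\cap\mathbb{B}_\epsilon$ and the divide model and tracking the local vanishing circles through $\delta\to 0^{+}$, plus the orientation/sign bookkeeping in part (ii) --- is left as a plan rather than carried out. That is acceptable for an outline of a cited theorem, but it is precisely the technical content of \cite{Areal}, so the proposal should be read as a roadmap to A'Campo's proof rather than a self-contained proof.
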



Then, the geometric monodromy $\varphi_{f} : \Sigma_{f} \to \Sigma_{f}$ can be 
represented by 
$$\varphi = \tau_{V_{1}} \circ \dots \circ \tau_{V_{\mu}}$$
where $\tau_{V_{i}}$ is the right Dehn twist along the vanishing cycle $V_{i}$.

 \subsection*{Depth of a divide}
We recall the notion of depth from \cite{BCCJ23}.
%
%
\begin{definition} \label{definition:depth}
The {\em depth} of vertices in $A\Gamma(\mathbb{D}_{f})$ is defined as follows.
\begin{enumerate}
\item  If a vertex $v$ is contained in the closure of some unbounded region in 
$B_{\epsilon}(0) \setminus f_{t_{0}}^{-1}(0)$, then $v$ has depth 0.
\item Remove all depth 0 vertices and all edges connected to them from 
$A\Gamma(\mathbb{D}_{f})$. We get a new diagram $A\Gamma_{1}(\mathbb{D}_{f})$. 
Then, a vertex $v$ of $A\Gamma(\mathbb{D}_{f})$ has depth 1 if
$v$ is contained in $A\Gamma_{1}(\mathbb{D}_{f})$ and it is a depth $0$ vertex 
of $A\Gamma_{1}(\mathbb{D}_{f})$.
\item Inductively, delete all vertices of depth less than $k$ and adjacent 
edges from $A\Gamma(\mathbb{D}_{f})$. Then, we obtain a new diagram 
$A\Gamma_{k}(\mathbb{D}_{f})$. A vertex $v$ of $A\Gamma(\mathbb{D}_{f})$ has 
depth $k$ if
$v$ is contained in $A\Gamma_{k}(\mathbb{D}_{f})$ and it is a depth $0$ vertex 
of $A\Gamma_{k}(\mathbb{D}_{f})$.
\end{enumerate}
We denote the depth of a vertex $v$ by $\mathrm{dep}\,v$. The depth of the 
$A\Gamma(\mathbb{D}_{f})$ (and $\mathbb{D}_{f}$) is defined to be the maximum 
of the set $\{ \mathrm{dep}\,v \, |\, v \in A\Gamma(\mathbb{D}_{f}) \}$.
\end{definition}
The depth of vanishing cycle $V^{\bullet}_{i}$  is defined as that of the 
corresponding vertex $v^{\bullet}_{i}$ in $A\Gamma(\mathbb{D}_{f})$.

\begin{figure}[h]
\includegraphics[scale=1]{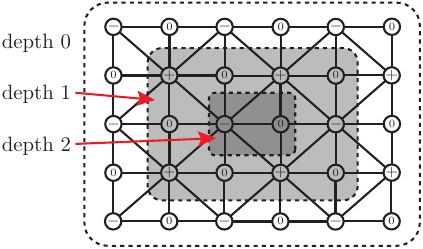}
\centering
\caption{Example of $A\Gamma$ diagram and depth.}
\label{fig:depthex}
\end{figure}

\subsection*{Milnor fiber and vanishing cycles from an A'campo divide}
 
A'Campo \cite{Areal} gave a  combinatorial model of the Milnor fiber $M_{f}$ 
from a divide (or from $A\Gamma(\mathbb{D}_{f})$ diagram).

For each double point   $v$  in $\mathbb{D}_{f}$ (which is each $0$ vertex of 
$A\Gamma(\mathbb{D}_{f})$), 
consider a surface $F_v$ given as in \cref{fig:block}. 
\begin{figure}[h]
\includegraphics[scale=0.7]{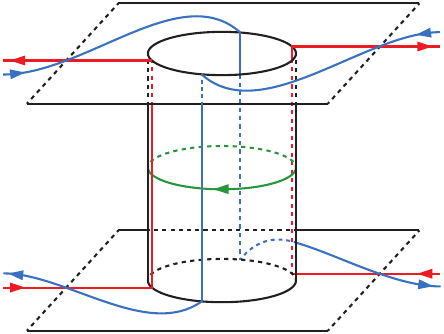}
\centering
\caption{The building block $F_v$.}
\label{fig:block}
\end{figure}
(Imagine two ribbons in skew position and take the connect sum between them. 
Upper and lower ribbons are placed along $f^{-1}(0)_{t_0}$ and they cross each 
other at $v$.
Regarding the $+$ region as the first quadrant of the plane, $x$-axis is the 
upper ribbon and $y$-axis is the lower ribbon. Given the same orientation for 
each $F_v$.
see \cref{fig:arcex} for example.)

For  two double points $v,w$ that are joined an edge in $f^{-1}(0)_{t_0}$,  we 
glue  $F_v$ and $F_w$ along the corresponding dotted boundaries
(with a half twist to match the orientations).

A'Campo showed that the resulting surface is diffeomorphic to the Milnor fiber 
$\Sigma_f$. Solid boundaries of each $F_v$ are glued to form the boundary of 
the Milnor fiber $\partial \Sigma_f$.

An important feature of this model is that distinguished collection of 
vanishing cycles are built in.
Namely, vanishing cycle corresponding to the 0 vertex $v$ is the 1-cycle in the 
middle cylinder of $F_v$, drawn as a green circle in \cref{fig:block}.
For each $+$ bounded region of the divide, we have the corresponding vanishing 
cycle which winds around the $+$ region. This is given by  the red arcs that 
are glued along the $+$-region.
Similarly, for each $-$ bounded region of the divide,  corresponding vanishing 
cycles are locally drawn as the blue arcs in \cref{fig:block}.

%
When a region is unbounded, there is no associated vanishing cycle. To indicate 
this, we will omit the corresponding red/blue arc from the picture.
(see \cref{fig:K-} where red arc on the right is omitted).  

\begin{figure}[h]
\includegraphics[scale=0.7]{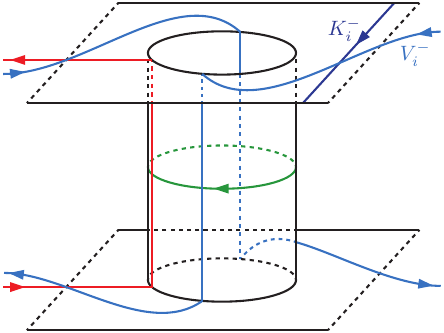}
\centering
\caption{The building block $F_v$.}
\label{fig:K-}
\end{figure}

\section{Proof of \Cref{thm:maina} for depth 0 cases}\label{sec:depthzero}
 
\Cref{thm:maina} for the  depth 0 cases was essentially proved in  
\cite{BCCJ23} and we will recall the construction therein.
In this case, each geometric vanishing arcset  consists of a single properly 
embedded arc.
Later in the general case, a geometric vanishing arcset for a vertex of depth 
$d$ will consist of $d+1$ disjoint properly embedded arcs.


\subsection*{Adapted family of arcsets}
We first recall the notion of an adapted family, which is quite convenient for 
(topological) variation operator calculations.
Namely, for a distinguished collection of vanishing cycles $(V_1,\dots, 
V_\mu)$, one would like to find a collection of  vanishing arcsets $(K_1,\dots, 
K_\mu)$ satisfying  $$V_{f}([K_{i}]) = [V_{i}] \in H_{n-1}(M) \; \textrm{for 
all} \; 1 \leq i \leq \mu.$$

 \begin{definition} \label{definition:adapted}\cite{BCCJ23}
A collection of arcsets $(K_1,\dots, K_\mu)$  is called {\em adapted} to the 
distinguished collection of vanishing cycles $(V_1,\cdots, V_\mu)$ if
it satisfies the following intersection conditions.
\begin{enumerate}
\item For any $j > i$, $K_j \bullet V_i =  -( -1)^\frac{n(n+1)}{2} V_{j} 
\bullet V_{i}$.
\item For any $j < i$, $K_j \bullet V_i = 0$.
\item For any $j$, $K_j \bullet V_j = 1$.
\end{enumerate}
The sign in (i) is due to the well-known Picard--Lefschetz formula (we follow 
the convention of \cite{AGZVII} with $f:\mathbb{C}^n \to \C$ and $n=2$ in our 
case).
\end{definition}
The following Proposition was shown in \cite[Proposition 6.7]{BCCJ23}.
\begin{prop}
\label{prop:adaptedvar}
If $(K_1,\dots, K_\mu)$ is adapted to $(V_1,\dots, V_\mu)$, then  we have
$$V_{f} ([K_i]) = ( -1)^\frac{n(n+1)}{2} [V_i], \;\;\; \forall i=1, \dots, 
\mu.$$ 
\end{prop}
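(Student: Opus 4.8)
The plan is to reduce the computation of $V_f([K_i])$ to a purely homological identity involving the intersection numbers that define an adapted family, using the classical fact that $V_f$ is an isomorphism (\cite[Theorem 2.2]{AGZVII}) together with the Picard--Lefschetz description of the monodromy as the product of Dehn twists $\varphi_f = \tau_{V_1}\circ\cdots\circ\tau_{V_\mu}$. First I would recall the Picard--Lefschetz formula in the form stated in \cite{AGZVII}: for a relative class $[a]$ one has, at the level of absolute homology, $V_f^{-1}[V_j]\bullet[c]=\mathcal{L}([V_j],[c])$ type relations, but more directly, the crucial input is that the collection $\{V_f^{-1}[V_1],\dots,V_f^{-1}[V_\mu]\}$ is a basis of $H_1(\Sigma_f,\partial\Sigma_f;\mathbb{Z})$ dual, in an appropriate triangular sense, to $\{[V_1],\dots,[V_\mu]\}$ under the intersection pairing $\bullet$. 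So the strategy is: show that the classes $(-1)^{n(n+1)/2}[K_i]$ satisfy exactly the same system of intersection equations with the $[V_j]$ that $V_f^{-1}[V_i]$ satisfies, and then invoke nondegeneracy of $\bullet$ on the span to conclude equality of classes, hence $V_f([K_i])=(-1)^{n(n+1)/2}[V_i]$.

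The key steps, in order: (1) Use the Seifert-form identity \eqref{eq:var_seifert} and its polarization, $\mathcal{L}([V_i],[V_j])=\bigl(V_f^{-1}[V_i]\bigr)\bullet[V_j]$, to record that $V_f^{-1}[V_i]\bullet[V_j]$ equals the $(i,j)$ entry of the Seifert matrix of the distinguished basis. (2) Compute that Seifert matrix from the Picard--Lefschetz data: for a distinguished collection the Seifert form is (upper or lower) triangular with $-1$'s (or the appropriate sign $(-1)^{n(n+1)/2}$, with $n=2$ giving $-1$) on the diagonal and off-diagonal entries given by $\mp(-1)^{n(n+1)/2}\,[V_j]\bullet[V_i]$ on one side and $0$ on the other --- this is exactly the classical content of \cite[Chapter 2]{AGZVII}. (3) Compare term by term with Definition~\ref{definition:adapted}: condition (iii) $K_j\bullet V_j=1$ matches the diagonal; condition (i) $K_j\bullet V_i=-(-1)^{n(n+1)/2}V_j\bullet V_i$ for $j>i$ matches one off-diagonal block; condition (ii) $K_j\bullet V_i=0$ for $j<i$ matches the vanishing block. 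Hence $(-1)^{n(n+1)/2}[K_i]$ and $V_f^{-1}[V_i]$ have identical intersection numbers against every $[V_j]$. (4) Since $\{[V_1],\dots,[V_\mu]\}$ spans $H_1(\Sigma_f;\mathbb{Z})$ and the pairing $\bullet\colon H_1(\Sigma_f,\partial\Sigma_f)\times H_1(\Sigma_f)\to\mathbb{Z}$ is a perfect pairing (Lefschetz duality), two relative classes with the same pairing against a spanning set of absolute classes coincide; therefore $(-1)^{n(n+1)/2}[K_i]=V_f^{-1}[V_i]$, which rearranges to the claim.

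The main obstacle I expect is bookkeeping of signs: pinning down that the diagonal of the Seifert matrix in the chosen orientation convention is $1$ (not $-1$) so that it agrees with condition (iii), and that the factor $(-1)^{n(n+1)/2}$ propagates consistently through \eqref{eq:var_seifert}, the Picard--Lefschetz formula, and the definition of $\bullet$ on relative versus absolute classes. One must fix orientations of the $V_i$ once and for all (as in \Cref{thm:ACampo}(2)) and of the arcs $K_i$, and then verify that the triangular normalization in \cite[Proposition 6.7]{BCCJ23} is reproduced; with $n=2$ the sign $(-1)^{n(n+1)/2}=(-1)^3=-1$, so in the end the statement reads $V_f([K_i])=-[V_i]$, and I would simply transcribe the sign audit already carried out in \cite{BCCJ23}. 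No genuinely new geometry is needed beyond \Cref{lem:representing_arcs} (to know the $K_i$ represent honest relative classes) and the classical isomorphism theorem; the proposition is essentially linear algebra over $\mathbb{Z}$ once the intersection dictionary is set up.
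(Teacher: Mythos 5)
The paper does not actually prove this proposition: it simply states it and cites \cite[Proposition 6.7]{BCCJ23}, so there is no in-paper argument to compare against. Taken on its own merits, your proposal is a valid strategy, but it is more indirect than the most natural one and has a couple of small wrinkles worth flagging.

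Your route is: show that $(-1)^{n(n+1)/2}[K_i]$ and $V_f^{-1}[V_i]$ pair identically with every $[V_j]$ under the intersection pairing $\bullet$, then invoke Lefschetz duality (perfect pairing) plus the fact that the $[V_j]$ form a basis of $H_1(\Sigma_f;\ZZ)$. That logic is sound, and steps (3)--(4) are correct. Two caveats: first, the word \emph{polarization} is off --- $\calL$ is not symmetric, so the quadratic identity \eqref{eq:var_seifert} cannot be polarized to recover $\calL([a],[b])$; what you actually need is the full bilinear identity $\calL([a],[b])=\bigl(V_f^{-1}[a]\bigr)\bullet[b]$, which is indeed what \cite[Theorem 2.3]{AGZVII} provides (the paper only quoted the diagonal case). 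Second, your plan to ``simply transcribe the sign audit already carried out in \cite{BCCJ23}'' is circular in this context, since \cite{BCCJ23} is exactly the reference being cited for the proposition; the sign bookkeeping has to be done independently, by fixing a convention for the Picard--Lefschetz formula and for the triangularity of the Seifert matrix of a distinguished basis.

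A cleaner and almost certainly shorter route --- and very likely what is in \cite{BCCJ23} --- is the direct Picard--Lefschetz computation, which avoids the Seifert form entirely. Write $\varphi_f=\tau_{V_1}\circ\cdots\circ\tau_{V_\mu}$ and apply it to $K_i$, peeling off twists one at a time. Condition (ii) of Definition~\ref{definition:adapted} gives $K_i\bullet V_j=0$ for $j>i$, so $\tau_{V_\mu},\ldots,\tau_{V_{i+1}}$ fix $[K_i]$. Condition (iii) gives $\tau_{V_i}[K_i]=[K_i]+(-1)^{n(n+1)/2}[V_i]$. Finally, condition (i) is engineered precisely so that $\bigl([K_i]+(-1)^{n(n+1)/2}[V_i]\bigr)\bullet V_j=0$ for every $j<i$, so the remaining twists $\tau_{V_{i-1}},\ldots,\tau_{V_1}$ have no further effect. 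Hence $V_f([K_i])=[\varphi_f(K_i)]-[K_i]=(-1)^{n(n+1)/2}[V_i]$. This computation makes visible exactly which adapted condition is responsible for which step, requires no appeal to perfect pairings or the Seifert matrix, and fixes all signs in one place (the Picard--Lefschetz convention). Your argument would work after the sign verification, but this direct version is the one to prefer.
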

Since $n=2$ in our case, we take the orientation reversal  $\overline{K}_i$ to 
obtain
$$V_f[\overline{K}_i] =  [V_i].$$
Adapted collection is related to the exceptional collection as follows.
\begin{prop} \label{prop:excep}
Suppose a collection $(K_1,\dots, K_\mu)$ of  arcsets satisfy the following:
\begin{enumerate}
\item Arcs in $K_1,\dots, K_\mu$ are disjoint from each other.
\item $ (K_1,\dots, K_\mu)$ is adapted to a distinguished collection of 
vanishing cycles $ (V_1,\cdots, V_\mu)$. 
\item $K_i$ is a geometric vanishing arcset for each $i$.
\end{enumerate}
Then,  $(\overline{K}_1,\dots, \overline{K}_\mu)$ is a topological exceptional 
collection of geometric vanishing arcsets.
\end{prop}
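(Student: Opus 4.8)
The plan is to check the three defining conditions of \Cref{definition:exceptional} for the ordered collection $(\overline{K}_1,\dots,\overline{K}_\mu)$ one at a time, reducing each to one of the three hypotheses of the proposition, together with \Cref{prop:adaptedvar} and \Cref{thm:vanishing_arcs_collection}. Two of these are essentially formal. Orientation reversal does not change the underlying arcs, so disjointness of the $\overline{K}_i$ is precisely hypothesis (1), giving condition (1) of \Cref{definition:exceptional}. Moreover $\Var_f(\overline{K}_i)$ is the closed curve $\Var_f(K_i)$ with reversed orientation, which by hypothesis (3) is a geometric vanishing cycle (and, by \Cref{thm:vanishing_arcs_collection}, a single non-separating simple closed curve); hence each $\overline{K}_i$ is a geometric vanishing arcset, so $(\overline{K}_1,\dots,\overline{K}_\mu)$ is an ordered collection of the type required.

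For the remaining two conditions I would fix notation: write $k_i=[K_i]\in H_1(\Si_f,\partial\Si_f;\ZZ)$, $v_i=[V_i]\in H_1(\Si_f;\ZZ)$, and $\epsilon=(-1)^{n(n+1)/2}$, so that $\epsilon=-1$ since $n=2$. By \Cref{prop:adaptedvar} we have $V_f(k_i)=\epsilon v_i$, hence $V_f([\overline{K}_i])=-\epsilon v_i=v_i$; equivalently $[\Var_f(\overline{K}_i)]=[V_i]$ in $H_1(\Si_f;\ZZ)$. For condition (3) of \Cref{definition:exceptional}, the pairing $\overline{K}_i\bullet\Var_f(\overline{K}_i)$ is an intersection number of a relative class with an honest simple closed curve, so it equals the homological pairing $[\overline{K}_i]\bullet[V_i]=-(k_i\bullet v_i)=-1$, where the last step is the adapted condition (iii). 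The sign $\epsilon=-1$, which is baked into the passage from $K_i$ to $\overline{K}_i$, is exactly what makes this $-1$ rather than $+1$.

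The main step is condition (2): for $i<j$ one must show $\varphi_f(\overline{K}_j)\bullet\overline{K}_i=0$. Since $\varphi_f$ can be taken to be the identity on a collar of $\partial\Si_f$, we have $\varphi_f(\overline{K}_j)=\overline{\varphi_f(K_j)}$, and reversing the orientation of both arcsets does not change the signed intersection number, so it suffices to show $\varphi_f(K_j)\bullet K_i=0$. Representing $\varphi_f(K_j)$ and $K_j$ by proper arcsets with the same endpoints, the difference $\varphi_f(K_j)-K_j$ is a $1$-cycle representing $V_f(k_j)=\epsilon v_j$, so additivity of the signed intersection count gives
\[
\varphi_f(K_j)\bullet K_i \;=\; K_j\bullet K_i \;+\; \big(\varphi_f(K_j)-K_j\big)\bullet K_i \;=\; K_j\bullet K_i \;+\; \epsilon\,\big([V_j]\bullet k_i\big).
\]
The first summand is $0$ because $K_j$ and $K_i$ are disjoint by hypothesis (1). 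For the second, $[V_j]\bullet k_i=\pm(K_i\bullet V_j)$, which vanishes by the adapted condition (ii) since $i<j$. Hence $\varphi_f(K_j)\bullet K_i=0$, completing the verification of condition (2), and \Cref{prop:adaptedvar} together with hypothesis (3) shows the variation images are the distinguished collection up to orientation.

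I expect the only real difficulty to be bookkeeping rather than substance: one must check that the various pairings in play agree where claimed — the relative/absolute intersection number used in \Cref{definition:adapted}, the arcset/arcset intersection number of \Cref{definition:exceptional}(2), which is well-defined only because the endpoints of distinct arcsets are disjoint, and the homological pairing $H_1(\Si_f,\partial\Si_f;\ZZ)\times H_1(\Si_f;\ZZ)\to\ZZ$ — and that the skew-symmetry relating $[V_j]\bullet k_i$ to $K_i\bullet V_j$ is tracked consistently together with the sign $(-1)^{n(n+1)/2}$. Once these conventions are pinned down, each of the three conditions drops out of exactly one hypothesis as above.
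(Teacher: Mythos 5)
Your proposal is correct and follows essentially the same approach as the paper's own (rather terse) proof: disjointness is immediate from hypothesis (1), condition (iii) of the exceptional collection follows from the adapted condition (iii) with a single orientation-reversal sign, and condition (ii) is derived by writing $\varphi_f(K_j)\bullet K_i$ as $K_j\bullet K_i$ plus the pairing of the cycle representing $V_f([K_j])=\pm[V_j]$ with $K_i$, then invoking disjointness and the adapted condition (ii). You track the sign $\epsilon=(-1)^{n(n+1)/2}=-1$ and the pairing conventions more explicitly than the paper does, and you also note that the $\overline{K}_i$ remain geometric vanishing arcsets, which the paper leaves implicit; these are welcome clarifications rather than a different route.
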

\begin{proof}
The disjoint condition (i) in \Cref{definition:exceptional} holds by the 
assumption. Since 
$$[V_i] = V_{f}([\overline{K}_i]) = [\varphi_f(\overline{K}_j)] -  
[\overline{K}_j] $$ and all $K_i$'s are disjoint, the vanishing conditions 
(ii)  \Cref{definition:adapted} is equivalent to the condition (ii)  in 
\Cref{definition:exceptional}. Lastly we have $\overline{K}_i \bullet V_i = - 
K_i \bullet V_i = -1$.
\end{proof}
 

\subsection*{Proof of  depth 0 case} \label{subsec:depth0}
In the case of depth 0 vertices, we will find a collection of properly embedded 
arcs $(K_1,\dots,K_\mu)$ which satisfies the assumptions of  
\Cref{prop:adaptedvar}.
Namely, each $K_i$ consists of a single arc, such that $\varphi_f(K_i)$ and 
$K_i$ do not intersect in the interior of $\Sigma_f$ (i.e. $i(K_i, 
\varphi_f(K_i))=0$). By  \Cref{thm:single_arcs}
each $K_i$ is a geometric vanishing arc. The adapted condition will help us to 
choose the corresponding arc and guarantees that topological variation operator 
takes $\overline{K}_i$ to $V_i$. Since the global monodromy is the composition 
of Dehn twists, it is not difficult to check directly that geometric variation 
image of $\overline{K_i}$ is not only homologous but also isotopic to the 
vanishing cycle $V_i$ of A'Campo.

\begin{itemize}
\item $-$ vertex

Let $V^{-}_{i}$ be a vanishing cycle which corresponds to a depth $0$ vertex of 
type $-$. Then, we need to find a curve $K^{-}_{i}$ satisfying 
$$K^{-}_{i} \bullet V^{\bullet}_{j} = \begin{cases} 1 & (\bullet = -, \; i=j), 
\\ 0 &(\mbox{otherwise}). \end{cases}$$
Since it is of depth $0$, the corresponding negative region is neighboring a 
positive unbounded region (hence a missing red arc) and these two regions share 
a $0$-vertex.
In the building block for this $0$-vertex, we have illustrated a part of the 
corresponding vanishing cycle $V^{-}_{i}$ in \cref{fig:K-}. 
Note that the missing red arc allows a room to draw the curve $K_i^-$ with the 
right intersection condition.

\item $0$ vertex

For a vanishing cycle $V^{0}_{i}$, depth 0 implies that it is neighboring an 
unbounded $-$ or $+$ region.  In these two cases, $K^{0}_{i}$ should satisfy
$$K^{0}_{i} \bullet V^{\bullet}_{j} = \begin{cases} 1 & (\bullet = 0, \; i=j), 
\\  1 & (\bullet=-, V^{-}_{j}\mbox{ and }V^{0}_{i}\mbox{ are connected in 
}A\Gamma \mbox{ diagram}),  \\ 0 &(\mbox{otherwise}) \end{cases}$$
and it is given as in \cref{fig:K0} (without $V^-$ on the left and without 
$V^+$ on the right) 

\begin{figure}[h]
\begin{subfigure}[t]{0.43\textwidth}
\includegraphics[scale=0.7]{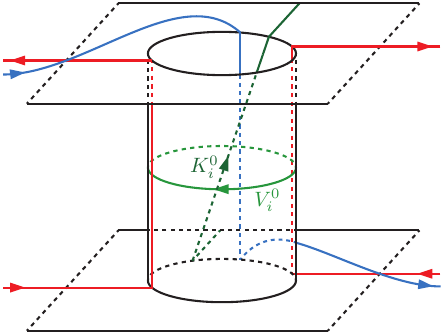}
\centering
\caption{ }
\label{fig:K01}
\end{subfigure}
\begin{subfigure}[t]{0.43\textwidth}
\includegraphics[scale=0.7]{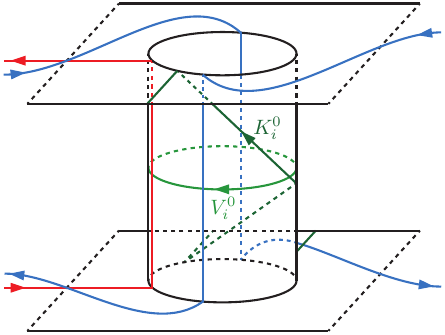}
\centering
\caption{ }
\label{fig:K02}
\end{subfigure}
\centering
\caption{$K^{0}_{i}$ in the building block.}
\label{fig:K0}
\end{figure}

\item $+$ vertex

A non-compact Lagrangian $K^{+}_{i}$ for a vanishing cycle $V^{0}_{i}$ satisfies
$$K^{+}_{i} \bullet V^{\bullet}_{j} = \begin{cases} 1 & (\bullet = +, \; i=j), 
\\ 
1 & (\bullet=0, V^{0}_{j}\mbox{ and }V^{+}_{i}\mbox{ are connected in }A\Gamma 
\mbox{ diagram}), \\
1 & (\bullet=-, V^{-}_{j}\mbox{ and }V^{+}_{i}\mbox{ are connected in }A\Gamma 
\mbox{ diagram}),  \\
0 &(\mbox{otherwise}). \end{cases}$$
We can find a part where $V^{+}_{i}$ lives alone as in the $-$ case. Then, we 
cut $V_i^+$ near that part and attach two new ends to the boundary of the 
Milnor fiber as drawn in \cref{fig:K+}.

\begin{figure}[h]
\includegraphics[scale=0.7]{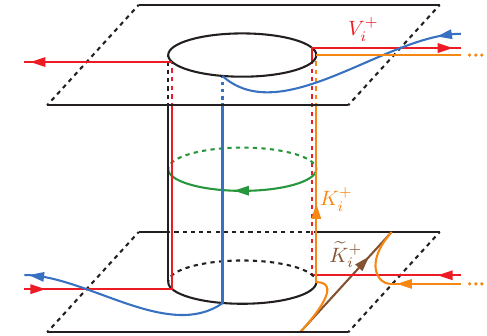}
\centering
\caption{$K^{+}_{i}$ in the building block.}
\label{fig:K+}
\end{figure}
\end{itemize}

One can check that these collection of properly embedded arcs $K_1,\cdots, 
K_\mu$ are all disjoint, adapted and $i(K_i, \varphi_f(K_i))=0$ for any $i$.
This proves  \Cref{thm:maina} for depth 0 cases.

\section{Basic arcs for higher depth cases}\label{sec:basic}
We will prove  \Cref{thm:maina} for a divide of non-zero depth in the remaining 
sections.
For a vertex $v_i$ of depth $d$ ($d>0$), there exist a path of $d$-edges in 
$A\Gamma(\mathbb{D}_{f})$-diagram connecting $v_i$ to an outer vertex (of depth 
$0$).
Geometric vanishing arcset $K_i$ for a vertex $v_i$ will be given by the set of 
basic arcs associated to the edges in this path.
In this section, we will define these basic arcs.  In the next section, we will 
prescribe how to choose paths for vertices in $A\Gamma(\mathbb{D}_{f})$-diagram
so that the associated geometric vanishing arcsets form a topological 
exceptional collection.

\subsection*{Relevant vertices}
We will first define the notion of relevant vertices of a given edge for 
convenience (relevant for intersection calculations later on).
Recall that there are three kinds of edges in $A\Gamma(\mathbb{D}_{f})$: 
between $+$ and $0$ vertices, between $+$ and $-$ vertices, and between $0$ and 
$-$ vertices. 
\begin{definition} \label{definition:relevant}
For two vertices $v$ and $w$ of $A\Gamma(\mathbb{D}_{f})$, we say that $v$ is 
adjacent to $w$ (and vice versa) if $v = w$ or there exists an edge connecting 
$v$ and $w$.
Then, we choose a subset of vertices of $A\Gamma(\mathbb{D}_{f})$ called {\em 
relevant vertices} associated to an edge in $A\Gamma(\mathbb{D}_{f})$ as 
follows.
\begin{enumerate}
\item For an edge $e$  connecting $v_{+}$ and $v_{0}$,
$$\mathcal{R}_{e} := \left \{ v \mid v \mbox{ is adjacent to }v_{+}\mbox{ but 
not adjacent to }v_{0} \right \} \cup \left \{ v_{+} \right \} .$$
\item For an edge $e$  connecting $v_{+}$ and $v_{-}$,
$$\mathcal{R}_{e} := \left \{ v \mid v \mbox{ is adjacent to }v_{+} \right \} 
\setminus \left \{ v_{-} \right \}.$$
\item For an edge $e$  connecting $v_{0}$ and $v_{-}$,
$$\mathcal{R}_{e} := \left \{ v \mid v \mbox{ is adjacent to }v_{0}\mbox{ but 
not adjacent to }v_{-} \right \} \cup \left \{ v_{0} \right \}.$$
Note that if any + vertex is adjacent to $v_{0}$, it is also adjacent to 
$v_{-}$. Therefore, that $+$ vertex is not in $\mathcal{R}_{e}$.
\end{enumerate}
In addition, we also define relavant vertices for $v_+$ or $v_-$ of depth 0.
\begin{enumerate}
\item[(iv)] For a vertex $v_{+}$ of depth 0,  
$$\mathcal{R}_{v} := \left \{ v \mid v \mbox{ is adjacent to }v_{+} \right \}.$$
\item[(v)] For a vertex $v_-$ of depth $0$,
$$\mathcal{R}_{v} := \left \{ v_{-} \right \}.$$
\end{enumerate}
In each case, the rest of the vertices not in $\mathcal{R}_{v}$ are called 
irrelevant. The vanishing cycles that correspond to the relevant (or 
irrelevant) vertices of given edge or vertex are called the {\em relevant (or 
irrelevant) vanishing cycles} of the edge or vertex and we say that the 
vanishing cycle is relevant (or irrelevant) to given edge or vertex.
\end{definition}

\begin{figure}[h]
\includegraphics[scale=1]{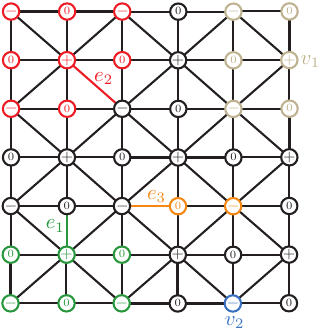}
\centering
\caption{Examples of relevant vertices.}
\label{fig:relevant}
\end{figure}
See \cref{fig:relevant} for an example of the 5 cases in the definition (in the 
order of $e_1,e_2,e_3, v_1,v_2$).
\subsection*{Basic arcs}
We now define the associated basic arc for each edge of  
$A\Gamma(\mathbb{D}_{f})$.
\begin{lemma} \label{lem:+0-}
For any edge $e$ in $A\Gamma(\mathbb{D}_{f})$, there exists an arc $K$ in $M$ 
such that
$$K \bullet V = \begin{cases}
-1 & (V \in \mathcal{R}_{e}), \\
0 & (V \notin \mathcal{R}_{e}).
\end{cases}$$
These arcs are denoted by $K^{+,0}$,$K^{+,-}$, or $K^{0,-}$ according to the 
type of an edge.
We denote by $K^{0,+}$, $K^{-,+}$, and $K^{-,0}$ the orientation reversals of 
the above respectively.
\end{lemma}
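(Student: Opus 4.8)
The plan is to construct the arc $K$ explicitly inside A'Campo's combinatorial model of the Milnor fiber, working block by block. Recall that $\Sigma_f$ is assembled from the building blocks $F_v$ (one per $0$-vertex), and that the distinguished vanishing cycles $V^{\bullet}_j$ are drawn as the green middle circles (for $0$-vertices) and as the red/blue arcs running along the $+$/$-$ regions (for $\pm$-vertices). The intersection form is encoded combinatorially by \Cref{thm:ACampo}: $V^+_i \bullet V^0_j = V^0_j \bullet V^-_k = V^+_k \bullet V^-_i = +1$ exactly when the corresponding vertices are joined by an edge. So the task is purely local: build an arc whose algebraic intersection with each $V$ is $-1$ precisely for $V \in \mathcal{R}_e$ and $0$ otherwise. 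First I would fix the edge $e$ and distinguish the three cases according to its type, and in each case give the picture of $K$ relative to the block(s) containing the vertices of $e$.

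For an edge $e$ joining $v_0$ and $v_-$ (case (iii)): the arc $K^{0,-}$ will be drawn inside the block $F_{v_0}$, running from $\partial\Sigma_f$, crossing the green cycle $V^0$ once (contributing the required $-1$ to $V^0 \in \mathcal{R}_e$), then travelling along the $-$-region associated to $v_-$ in such a way that it crosses $V^-$ once as well (so $V^- \in \mathcal{R}_e$), and exiting to $\partial\Sigma_f$. The crucial point is that because $K^{0,-}$ threads the green cylinder of $v_0$ and hugs the $-$-side of $v_0$, it meets exactly those vanishing cycles adjacent to $v_0$ but misses the $+$-cycles adjacent to $v_0$ (which are, as \Cref{definition:relevant} notes, automatically adjacent to $v_-$ and hence must be excluded). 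The case (i) for an edge joining $v_+$ and $v_0$ is the mirror picture: $K^{+,0}$ runs through the block of the $0$-vertex that $v_0$ denotes, crossing $V^0$... no — more carefully, $K^{+,0}$ should be built along the $+$-region attached to $v_+$, crossing $V^+$ once and crossing each $0$-cycle adjacent to $v_+$ except $V^0$ itself is included while the $-$-cycles adjacent to $v_+$-but-also-adjacent-to-$v_0$ are avoided. For case (ii), an edge joining $v_+$ and $v_-$, the arc $K^{+,-}$ is drawn along the $+$-region of $v_+$, picking up $V^+$ and every vanishing cycle adjacent to $v_+$ except $V^-$; concretely one runs the arc parallel to the red arc realizing $V^+$ but pushed slightly so that near the shared $0$-vertex of $v_+$ and $v_-$ it peels off toward the boundary before meeting the blue arc of $v_-$.

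The orientation-reversed arcs $K^{0,+}, K^{-,+}, K^{-,0}$ then satisfy the same intersection equalities with the signs flipped to $+1$, as stated. After the local construction, I would verify the global intersection numbers: each $V^{\bullet}_j$ lives in one or two adjacent blocks, and since $K$ is contained in a small union of blocks around $e$, one only needs to check that $K$ meets no vanishing cycle outside $\mathcal{R}_e$ — this follows because the red/blue arcs of non-adjacent regions simply do not enter the blocks where $K$ lives, and within those blocks the drawing is arranged to miss the excluded cycles (this is exactly the role of the ``room'' left by omitted red/blue arcs, as in the depth-$0$ discussion around \cref{fig:K-}). I expect the main obstacle to be bookkeeping rather than conceptual: one must check in case (i) and (iii) that the $+$-cycles (resp.\ the relevant-but-excluded cycles) can genuinely be avoided, i.e.\ that there is always enough space in the block to route $K$ past them — here the planarity of $A\Gamma(\mathbb{D}_f)$ and the explicit structure of $F_v$ (the green cylinder separating the red and blue arc families) guarantee the routing exists, but making this precise for all adjacency patterns at $v_+$, $v_0$, $v_-$ is the part that needs care.
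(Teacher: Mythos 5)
Your general strategy for the $+,0$ and $+,-$ edges matches the paper's: in each case the arc is routed inside the relevant building block so that, away from that block, it runs parallel to the red cycle $V^+$ and thereby inherits its intersections with the distant vanishing cycles, while near the block it peels off to $\partial\Sigma_f$ so as to drop exactly the excluded cycles. That part of the proposal is sound (modulo some garbled phrasing in case (i), where you presumably meant that $V^+$ is included but $V^0$ is \emph{not}).

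However, your case (iii) contains a genuine error. For an edge $e$ joining $v_0$ and $v_-$, Definition~\ref{definition:relevant}(iii) gives
\[
\mathcal{R}_e = \{ v \mid v \text{ adjacent to } v_0 \text{ and not adjacent to } v_- \} \cup \{ v_0 \},
\]
and since ``adjacent'' includes equality, $v_-$ is adjacent to itself and hence $v_- \notin \mathcal{R}_e$. Your proposed $K^{0,-}$ ``crosses $V^-$ once as well (so $V^- \in \mathcal{R}_e$)'' is therefore wrong on both counts: $V^-$ is not relevant, and the arc must be routed so as to \emph{miss} $V^-$. The correct picture (as in the paper's Figure for $K^{0,-}$) has the arc crossing $V^0$ once and then crossing the \emph{opposite} negative cycle $V'^-$ (the other $-$ quadrant at the double point $v_0$), which is the unique $-$ vertex adjacent to $v_0$ but not to $v_-$; the $+$ vertices adjacent to $v_0$ are automatically adjacent to $v_-$, so they are also excluded, and this is precisely why the arc can be confined to $F_{v_0}$ without running along a long red cycle. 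This is not a cosmetic slip: if $K^{0,-}$ met $V^-$, the telescoping cancellations in Lemma~\ref{lem:canceled} and the adaptedness computation in Proposition~\ref{thm:adapted} would fail, so the whole higher-depth construction would break. Re-read Definition~\ref{definition:relevant}(iii) and redraw $K^{0,-}$ accordingly.
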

\begin{proof}
 Let us start with the case $K^{+,0}$. Let $e$ be an edge between $v_+$ and 
 $v_0$.
We define $K^{+,0}$ using the vanishing cycle $V^+$ as in  \Cref{subsec:depth0}.
Namely, consider the building block $F_{v_0}$. We define $K^{+,0}$ as an arc 
which starts and ends as drawn in  \cref{fig:K+0K+-} (a) and travels along the 
vanishing cycle $V^+$ in the Milnor fiber. More precisely, $K^{+,0}$ lies in a 
small neighborhood $V^+$ in $\Sigma_f \setminus F_{v_0}$, with only one 
negative intersection $K^{+,0} \bullet V^+=-1$ therein. (Note that there should 
be at least one crossing because  starting and ending segments lie on different 
sides.)
As a result, $K^{+,0}$ do not intersect two $V^-$'s and $V^0$ that appear in 
$F_{v_0}$, and the rest of the vanishing cycle $V_j$'s,  $K^{+,0} \bullet V_j = 
- V_+ \bullet V_j$.
This proves that $K^{+,0}$ satisfies the desired intersection condition in the 
lemma.
 
\begin{figure}[h]
\begin{subfigure}[t]{0.48\textwidth}
\includegraphics[scale=0.7]{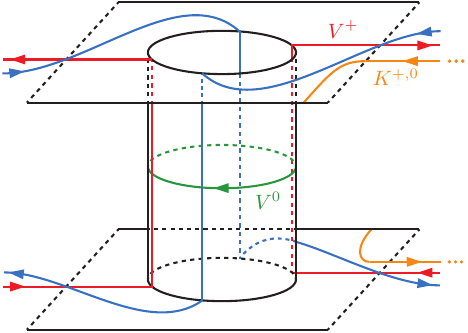}
\centering
\caption{$K^{+,0}$ and corresponding $V^{+}$, $V^{0}$.}
\label{fig:K+0}
\end{subfigure}
\begin{subfigure}[t]{0.48\textwidth}
\includegraphics[scale=0.7]{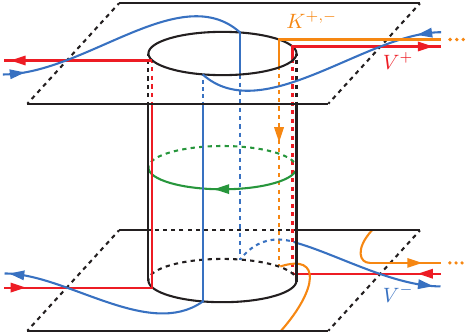}
\centering
\caption{$K^{+,-}$ and corresponding $V^{+}$, $V^{-}$.}
\label{fig:K+-}
\end{subfigure}
\centering
\caption{Description of basic arcs $K^{+,0}$ and $K^{+,-}$.}
\label{fig:K+0K+-}
\end{figure}
The other cases can be handled similarly.  For an edge connecting $v^+$ and 
$v^-$, $K^{+,-}$ is defined similarly using \cref{fig:K+0K+-} (b) and $V^+$.
Note that $K^{+,-}$ intersects all vanishing cycles intersecting $V^{+}$ (and 
$V^{+}$ itself) except $V^{-}$ of the edge.

For an edge connecting $v^0$ and $v^-$, $K^{0,-}$ drawn in \cref{fig:K0-} only 
intersects $V^{0}$ and negative vanishing cycle which is not $V^-$. These are 
exactly the relevant vanishing cycles of the given edge. This proves the lemma.
\begin{figure}[h]
\includegraphics[scale=0.7]{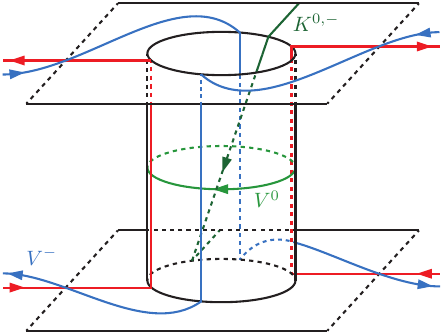}
\centering
\caption{$K^{0,-}$ and corresponding $V^{0}$, $V^{-}$.}
\label{fig:K0-}
\end{figure}
\end{proof}
We call the 6 type of arcs in the above lemma as {\em basic arcs}.

We may rephrase our choice of  arcs for depth $0$ vertices in 
\Cref{subsec:depth0} as follows.
\begin{lemma} \label{lem:+-}
Let $v$ be a depth 0 vertex of type $+$ or $-$ in $A\Gamma(\mathbb{D}_{f})$. 
Then, there exists a properly embedded arc $K$ in $M$ such that
$$K \bullet V = \begin{cases}
1 & (V \in \mathcal{R}_{v}), \\
0 & (V \notin \mathcal{R}_{v}).
\end{cases}$$
\end{lemma}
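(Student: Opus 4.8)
The plan is to recognize that \Cref{lem:+-} merely repackages, in the language of relevant vertices, the two explicit constructions already carried out in \Cref{subsec:depth0}. I would argue by cases on the type of the depth $0$ vertex $v$.

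First suppose $v = v^-_i$ has type $-$. By case (v) of \Cref{definition:relevant} we have $\mathcal{R}_v = \{v^-_i\}$, so the claim is that there is a properly embedded arc $K$ with $K \bullet V^-_i = 1$ and $K \bullet V = 0$ for every other vanishing cycle in the distinguished collection. This is exactly the arc $K^-_i$ constructed in \Cref{subsec:depth0}: depth $0$ forces the negative bounded region attached to $v^-_i$ to be adjacent, through a $0$-vertex, to an \emph{unbounded} positive region, so in the building block $F_{v_0}$ the corresponding red arc is absent (as in \cref{fig:K-}); this leaves room to draw $K^-_i$ crossing $V^-_i$ once and missing all other vanishing cycles. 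So I take $K = K^-_i$.

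Next suppose $v = v^+_i$ has type $+$. By case (iv) of \Cref{definition:relevant}, $\mathcal{R}_v$ consists of $v^+_i$ together with every vertex joined to it by an edge. Since $A\Gamma(\mathbb{D}_f)$ contains no edge between two $+$-vertices, this set is precisely $\{v^+_i\}$ together with the $0$- and $-$-vertices adjacent to $v^+_i$. Hence the asserted intersection pattern is $K \bullet V^+_i = 1$, then $K \bullet V^0_j = 1$ whenever $V^0_j$ is connected to $V^+_i$ in $A\Gamma(\mathbb{D}_f)$, $K \bullet V^-_j = 1$ whenever $V^-_j$ is connected to $V^+_i$, and $0$ otherwise. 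This is exactly the arc $K^+_i$ built in \Cref{subsec:depth0}: one finds (again using depth $0$) a stretch along which $V^+_i$ runs by itself, cuts $V^+_i$ there, and attaches the two new endpoints to $\partial \Sigma_f$ as in \cref{fig:K+}; the intersection numbers of $K^+_i$ with the distinguished collection listed there match the displayed list term by term, the signs coming from \Cref{thm:ACampo}(2). So I take $K = K^+_i$.

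The only genuinely necessary verification --- and the step I would write out most carefully --- is the bookkeeping identification that the combinatorially defined sets $\mathcal{R}_v$ of \Cref{definition:relevant}(iv)--(v) coincide with the index sets appearing in the intersection formulas of \Cref{subsec:depth0}; the one subtlety is to use that no $+$-vertex of $A\Gamma(\mathbb{D}_f)$ is adjacent to another $+$-vertex, so that ``adjacent to $v^+_i$'' contributes no extra positive vanishing cycle. No new geometric construction is required beyond the figures already drawn, so there is no real obstacle here; the lemma follows at once once this identification is in place.
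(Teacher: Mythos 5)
Your proposal is correct and is exactly the reading the paper intends: the paper offers no separate argument for \Cref{lem:+-} beyond the remark that it ``rephrases'' the depth-$0$ constructions of \Cref{subsec:depth0}, and you carry out precisely that translation, checking that $\mathcal{R}_{v}$ from \Cref{definition:relevant}(iv)--(v) matches the index sets in the displayed intersection formulas for $K^-_i$ and $K^+_i$ (the only subtlety being that $A\Gamma(\mathbb{D}_f)$ has no $+$-to-$+$ edges). No gap; this is the paper's own argument made explicit.
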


\subsection*{Monodromy images of basic arcs}
Now, we describe the monodromy images of basic arcs, which are needed later. 
Recall that the monodromy $\varphi_{f}$ is the composition of Dehn twists along 
vanishing cycles:
$$\varphi_{f} = \tau_{V^{-}_{1}} \circ \dots \circ \tau_{V^{+}_{n_{+}}}.$$

A priori, the basic arc $K^{+,0}$ meets $V^{+}$ and does not meet any other $+$ 
vanishing cycles. After taking the Dehn twist $\tau_{V^{+}}$, 
$\tau_{V^{+}}(K^{+,0})$ becomes an arc given in \cref{fig:K+0mon} (after some 
isotopy). Then, the arc $\tau_{V^{+}}(K^{+,0})$ intersects only $V^{0}$ among 
$0$ vanishing cycles. Its Dehn twist image $\tau_{V^{0}} \circ 
\tau_{V^{+}}(K^{+,0})$ is drawn in \cref{fig:K+0mon} and it does not meet any 
$-$ vanishing cycle. Thus, the monodromy image $\varphi_{f}(K^{+,0})$ is the 
same as $\tau_{V^{0}} \circ \tau_{V^{+}}(K^{+,0})$ as a result.

\begin{figure}[h]
\begin{subfigure}[t]{0.48\textwidth}
\includegraphics[scale=0.7]{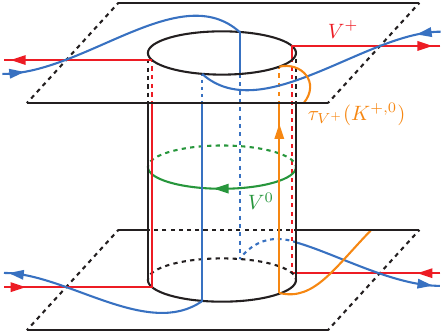}
\centering
\caption{$\tau_{V^{+}}(K^{+,0})$.}
\end{subfigure}
\begin{subfigure}[t]{0.48\textwidth}
\includegraphics[scale=0.7]{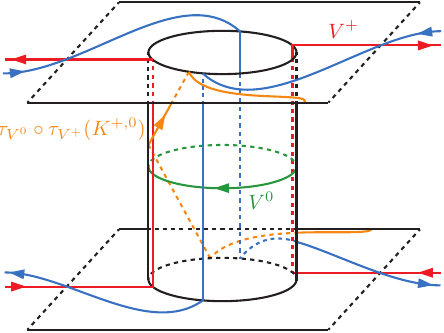}
\centering
\caption{$\tau_{V^{0}} \circ \tau_{V^{+}}(K^{+,0})$.}
\end{subfigure}
\centering
\caption{Monodromy image of $K^{+,0}$.}
\label{fig:K+0mon}
\end{figure}

Similarly, the basic arc $K^{+,-}$ meets $V^{+}$ and does not meet any other 
$+$ vanishing cycles and its Dehn twist image $\tau_{V^{+}}(K^{+,-})$ is a 
straight line connecting two endpoints of $K^{+,-}$ (see \cref{fig:K+-mon}).  
This now meets $V^{-}$ and does not meet any other $-$ vanishing cycles and the 
monodromy image $\varphi_{f}(K^{+,-})$ is given (after some isotopy) as in 
\cref{fig:K+-mon}.

\begin{figure}[h]
\begin{subfigure}[t]{0.48\textwidth}
\includegraphics[scale=0.7]{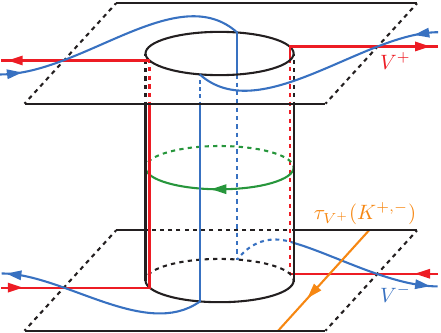}
\centering
\caption{$\tau_{V^{+}}(K^{+,-})$.}
\end{subfigure}
\begin{subfigure}[t]{0.48\textwidth}
\includegraphics[scale=0.7]{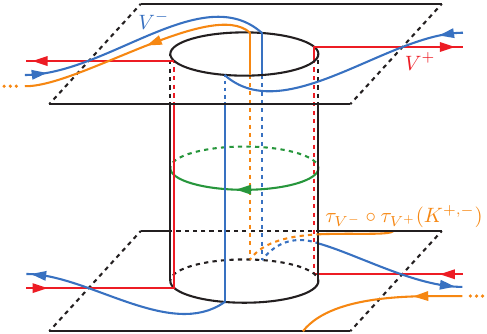}
\centering
\caption{$\tau_{V^{-}} \circ \tau_{V^{+}}(K^{+,-})$.}
\end{subfigure}
\centering
\caption{Monodromy image of $K^{+,-}$.}
\label{fig:K+-mon}
\end{figure}

Lastly, the basic arc $K^{0,-}$ does not intersect any $+$ vanishing cycles and 
then the arc $\tau_{V^{0}}(K^{0,-})$ meets one $V^{-}$ corresponding $-$ vertex 
of given edge (which is not a relevant vanishing cycle). See \cref{fig:K0-mon} 
for the monodromy image $\varphi_{f}(K^{0,-})$.

\begin{figure}[h]
\begin{subfigure}[t]{0.48\textwidth}
\includegraphics[scale=0.7]{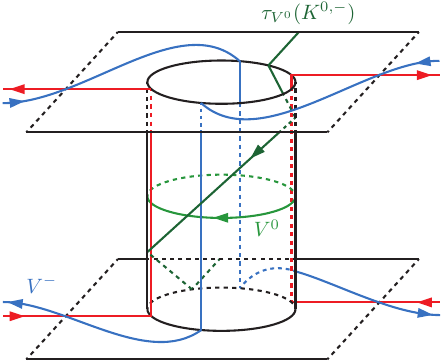}
\centering
\caption{$\tau_{V^{0}}(K^{0,-})$.}
\end{subfigure}
\begin{subfigure}[t]{0.48\textwidth}
\includegraphics[scale=0.7]{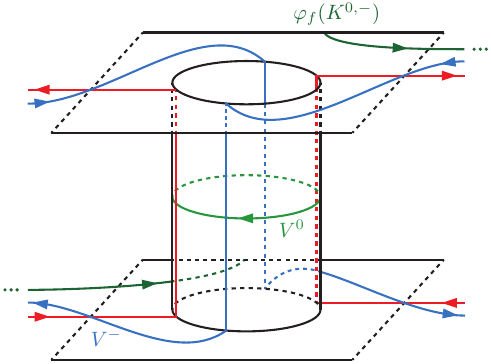}
\centering
\caption{$\varphi_{f}(K^{0,-})$.}
\end{subfigure}
\centering
\caption{Monodromy image of $K^{0,-}$.}
\label{fig:K0-mon}
\end{figure}

From these observations, we characterize basic arcs in the following way.

\begin{prop}\label{prop:key}
The basic arcs $K^{+,0}$, $K^{+,-}$, and $K^{0,-}$ satisfy
$$V_f([K^{+,0}]) = [V^{+}] - [V^{0}], \;\; V_f([K^{+,-}]) = [V^{+}] - [V^{-}], 
\mbox{ and }\; V_f([K^{0,-}]) = [V^{0}] - [V^{-}].$$
\end{prop}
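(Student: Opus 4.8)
The plan is to read off the variation operator $V_f$ from the explicit monodromy-image descriptions computed just above. Recall that $V_f([K]) = [\varphi_f(K)] - [K]$ as a class in $H_1(\Si_f;\ZZ)$, where the right-hand side is a genuine cycle because $\varphi_f$ fixes $\partial\Si_f$ pointwise, so $\varphi_f(K)$ and $K$ share the same boundary and $\varphi_f(K) \ast (-K)$ is a closed curve. Thus for each of the three basic arcs it suffices to identify the closed curve $\varphi_f(K) \ast (-K)$ up to homology with the stated combination of vanishing cycles, and the figures \cref{fig:K+0mon}, \cref{fig:K+-mon}, \cref{fig:K0-mon} already exhibit $\varphi_f(K^{+,0})$, $\varphi_f(K^{+,-})$, $\varphi_f(K^{0,-})$ explicitly as the partial Dehn-twist images obtained by truncating $\varphi_f = \tau_{V^-_1}\circ\cdots\circ\tau_{V^+_{n_+}}$ at the point where further twists no longer move the arc.

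First I would treat $K^{+,0}$. By \Cref{lem:+0-} and its proof, $K^{+,0}$ meets only $V^+$ among $+$-vanishing cycles (once, with $K^{+,0}\bullet V^+ = -1$), so applying the twists $\tau_{V^+_j}$ only $\tau_{V^+}$ acts nontrivially, and then $\tau_{V^+}(K^{+,0})$ meets only $V^0$ among $0$- and $-$-vanishing cycles, so only $\tau_{V^0}$ acts afterwards and the remaining twists fix the arc; hence $\varphi_f(K^{+,0}) = \tau_{V^0}\circ\tau_{V^+}(K^{+,0})$ as drawn. Now compute the homology class of $\varphi_f(K^{+,0}) \ast (-K^{+,0})$ using the Picard--Lefschetz formula: $[\tau_{V^+}(K^{+,0})] = [K^{+,0}] + (K^{+,0}\bullet V^+)[V^+] = [K^{+,0}] - [V^+]$ in $H_1(\Si_f,\partial\Si_f;\ZZ)$ (with the sign convention of \cite{AGZVII}, $n=2$), and then applying $\tau_{V^0}$, whose intersection with $\tau_{V^+}(K^{+,0})$ is $+1$ by the figure, gives a further summand $+[V^0]$ reversed in sign by orientation conventions — I would pin down the exact signs against \Cref{definition:adapted}(i), which already records that $K_j \bullet V_i = -(-1)^{n(n+1)/2} V_j\bullet V_i = -V_j\bullet V_i$ for $n=2$. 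The upshot is $V_f([K^{+,0}]) = [V^+] - [V^0]$. The cases $K^{+,-}$ and $K^{0,-}$ go the same way: for $K^{+,-}$ one has $\varphi_f(K^{+,-}) = \tau_{V^-}\circ\tau_{V^+}(K^{+,-})$ by \cref{fig:K+-mon} and the class is $[V^+] - [V^-]$; for $K^{0,-}$ one has $\varphi_f(K^{0,-}) = \tau_{V^-}\circ\tau_{V^0}(K^{0,-})$ (no $+$-cycle is met) by \cref{fig:K0-mon}, giving $[V^0] - [V^-]$.

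The main obstacle I expect is bookkeeping of orientations and signs: the Picard--Lefschetz formula, the orientation of the vanishing cycles chosen in \Cref{thm:ACampo}(ii), the orientation reversal $(-K)$ built into $\Var_f$, and the $(-1)^{n(n+1)/2}$ factor all have to be made mutually consistent so that the three right-hand sides come out as $[V^+]-[V^0]$, $[V^+]-[V^-]$, $[V^0]-[V^-]$ rather than with some signs flipped. I would handle this by fixing once and for all the conventions from \Cref{thm:ACampo} (so that $V^+\bullet V^0 = V^0\bullet V^- = V^+\bullet V^- = +1$ whenever the cycles intersect) and from \Cref{definition:adapted}, and then checking that the two intersection points traversed by each $\varphi_f(K)$ contribute with opposite signs to the two summands — which is exactly what makes the answer a \emph{difference} of two vanishing cycles. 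Everything else is routine: once $\varphi_f(K)$ is identified with the displayed arc, its homology class relative to $\partial\Si_f$ is a direct computation, and subtracting $[K]$ yields the absolute class claimed.
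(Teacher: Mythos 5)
Your framework starts from the right place --- use the monodromy-image figures \cref{fig:K+0mon}, \cref{fig:K+-mon}, \cref{fig:K0-mon} and compute $V_f([K]) = [\varphi_f(K)] - [K]$ --- but the paper does not invoke the Picard--Lefschetz formula at all, and its route deliberately sidesteps the sign bookkeeping you defer. The paper's proof is a direct geometric identification: from the figures one sees that away from a collar of $\partial\Si_f$ each basic arc \emph{is} a copy of the corresponding vanishing cycle with the opposite orientation. Concretely, $K^{+,-}$ coincides with $-V^+$ away from the boundary, while its monodromy image $\varphi_f(K^{+,-})$ coincides with $-V^-$ away from the boundary; since $\varphi_f$ is the identity near $\partial\Si_f$, the two arcs share the same boundary portions, so those pieces cancel in $\varphi_f(K^{+,-}) - K^{+,-}$ and the remaining class is $(-[V^-]) - (-[V^+]) = [V^+] - [V^-]$. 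The other two cases are read off identically. Nothing else is needed.

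Your proposal, by contrast, defers the key sign analysis (``I would pin down the exact signs \dots'') and, as written, is internally inconsistent. First, $-(-1)^{n(n+1)/2} = -(-1)^{3} = +1$ for $n=2$, not $-1$ as you wrote; the adapted condition is $K_j\bullet V_i = V_j\bullet V_i$, not $-V_j\bullet V_i$. Second, your intermediate $[\tau_{V^+}(K^{+,0})] = [K^{+,0}] + (K^{+,0}\bullet V^+)[V^+] = [K^{+,0}] - [V^+]$ is incompatible with your subsequent claim that $\tau_{V^+}(K^{+,0})\bullet V^0 = +1$: pairing that expression with $V^0$ gives $0 - V^+\bullet V^0 = -1$, and pushing the error through yields $V_f([K^{+,0}]) = -[V^+] - [V^0]$, the wrong answer. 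With the sign convention that a positive Dehn twist contributes $-(a\bullet\Delta)\Delta$ (the $n=2$ case of the \cite{AGZVII} formula), one gets instead $[\tau_{V^+}(K^{+,0})] = [K^{+,0}] + [V^+]$, then intersection $+1$ with $V^0$, then $[\tau_{V^0}\tau_{V^+}(K^{+,0})] = [K^{+,0}] + [V^+] - [V^0]$, and finally the claimed $V_f([K^{+,0}]) = [V^+] - [V^0]$. So your algebraic route can be made to work once the conventions are nailed down, but the sensitivity you ran into is exactly why the paper's proof avoids Picard--Lefschetz and reads the answer off the figures in one step.
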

\begin{proof}
This can be checked from the above figures. For example, in the second case, 
 the basic arc $K^{+,-}$ looks like $V^{+}$ with the opposite orientation away 
 from its boundary and similarly, its monodromy image $\varphi_{f}(K^{+,-})$ 
 looks like $V^{-}$ with the opposite orientation. Since they have the same 
 boundary, by definition, the variation image of $[K^{+,-}]$ is $[V^{+}] - 
 [V^{-}]$.
\end{proof}

\section{Construction of arcsets in general}\label{sec:arcsetgeneral}
Given an $A\Gamma$ diagram diagram $A\Gamma(\mathbb{D}_{f})$,  we choose a path 
$\gamma_v$ for each vertex $v$ in $A\Gamma(\mathbb{D}_{f})$ from an outer 
vertex (of depth 0) to the vertex $v$ as follows.

First, we set up our notations. From now on, when we consider a path $\gamma$ 
in the $A\Gamma$ diagram $A\Gamma(\mathbb{D}_{f})$, $\gamma$ is always given by 
the concatenation of all distinct edges $e_{1} e_{2} \dots e_{m}$. Then, we can 
orient these edges in the path $\gamma$ naturally from the starting point to 
the endpoint. For an edge $e_{i}$ contained in $\gamma$, let us denote its 
source and target by $s(e_{i})$ and $t(e_{i})$, respectively. In particular, we 
define the source and target of $\gamma$ by $s(\gamma) := s(e_{1})$ and 
$t(\gamma) := t(e_{m})$. In fact, there is an ambiguity when $\gamma$ consists 
of only one edge. However, we will deal with length 1 paths in 
$A\Gamma(\mathbb{D}_{f})$ such that two vertices of that edge have different 
depths. Then, we define $s(\gamma)$ to be a vertex of smaller depth and 
$t(\gamma)$ to be the other.

\begin{definition}[Good paths] \label{definition:paths}
Given an $A\Gamma(\mathbb{D}_{f})$, we construct paths $\gamma_v$ for all 
vertices $v$ inductively as follows (similar to \Cref{definition:depth}).
\begin{enumerate}
\item For a vertex $v$ of depth 0, we choose $\gamma_v$ to be the constant path 
$e_{v}$ at $v$.
\item Suppose we have chosen paths $\gamma_v$ for all vertices of depth $< k$, 
$(k \geq 1)$.
Then, for a $+$ vertex $v$ of depth $k$, there is a $-$ vertex $w$ of depth 
$k-1$ connected to $v$ by an edge $e$. Similarly, for a $-$ vertex $v$ of depth 
$k$, there is a $+$ vertex $w$ of depth $k-1$ connected to $v$ by an edge $e$. 
Lastly, for a $0$ vertex of depth $k$, there is a vertex $w$ of depth $k-1$ 
connected to $v$ by an edge $e$ such that $w$ is either $+$ or $-$ vertex.
We concatenate the path $\gamma_w$ with the edge $e$ to obtain the path 
$\gamma_v$.
\end{enumerate}
This gives a collection of  paths for every vertex of  
$A\Gamma(\mathbb{D}_{f})$. We call them {\em good paths}.
\end{definition}
A choice of good paths is not unique. We will choose one and fix it from now on.
To write a good path for a vertex of depth $>0$, we will use a notation 
$\gamma_v = e_{1} \dots e_{m}$ where each $e_i$ is an edge of 
$A\Gamma(\mathbb{D}_{f})$ for any $i$.
Thus in this expression of $\gamma_v$, a factor of constant path does not 
appear.

The following is easy to check.
\begin{lemma} \label{lem:paths}
For good paths, the following holds.
\begin{enumerate}
\item For any $+$ and $-$ vertices of depth $\geq 1$, a good path $\gamma_v = 
e_{1} \dots e_{m}$ consists of edges $e_{i}$ such that $\{ |s(e_{i})|, 
|t(e_{i})| \} = \{ + , - \}$ for $1 \leq i \leq m$.
\item For any 0 vertex of depth $\geq 1$, a good path $\gamma_v = e_{1} \dots 
e_{m}$ consists of edges $e_{i}$ such that $\{ |s(e_{i})|, |t(e_{i})| \} = \{ + 
, - \}$ for $1 \leq i \leq m-1$.
\item For any two vertices $v \neq w$, $\gamma_v$ and $\gamma_w$ are either 
disjoint or overlap up to depth $\leq k$ vertices (and edges between them) for 
some $k \leq \mathrm{min} \{ \mathrm{dep}\,v, \mathrm{dep}\, w \}$.
\end{enumerate}
\end{lemma}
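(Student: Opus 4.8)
The plan is to exploit the recursive, forest-like structure of the good paths produced by \Cref{definition:paths}. The construction assigns to every vertex $v$ of depth $k:=\mathrm{dep}\,v\geq 1$ a well-defined \emph{parent} $p(v)$ of depth $k-1$ together with a distinguished edge $e_v$ joining $p(v)$ to $v$, and decrees $\gamma_v=\gamma_{p(v)}\ast e_v$; for $v$ of depth $0$ one has $\gamma_v$ constant. Iterating the recursion, $\gamma_v$ is the concatenation $e_{p^{k-1}(v)}\ast\cdots\ast e_{p(v)}\ast e_v$ along the chain $p^{k}(v),p^{k-1}(v),\dots,p(v),v$, whose successive depths are exactly $0,1,\dots,k$. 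I would first record this normal form, since all three statements of the lemma are read off from it.

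For (i) I would induct on $k=\mathrm{dep}\,v$ over the $+$ and $-$ vertices. In the base case $k=1$, the path $\gamma_v$ is the single edge $e_v$ from a depth-$0$ vertex $p(v)$ to $v$; by the case distinction in \Cref{definition:paths} a $+$ vertex of depth $1$ is joined by its chosen edge to a $-$ vertex (and symmetrically), so $\{|s(e_v)|,|t(e_v)|\}=\{+,-\}$. For the inductive step, $p(v)$ is a $+$/$-$ vertex of depth $k-1\geq 1$ of the sign opposite to $v$, so the inductive hypothesis applies to all edges of $\gamma_{p(v)}$, and the final edge $e_v$ again joins a $+$ vertex to a $-$ vertex. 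Part (ii) is then immediate: for a $0$ vertex $v$ of depth $k\geq 1$ we have $\gamma_v=\gamma_{p(v)}\ast e_v$ with $p(v)$ a $+$/$-$ vertex of depth $k-1$, so by (i) (or vacuously, when $k=1$ and $\gamma_{p(v)}$ is constant) every edge of $\gamma_v$ other than the last is of type $\{+,-\}$, while the last edge $e_v=e_m$ joins a $0$ vertex to a $+$/$-$ vertex.

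For (iii), the key point is that $p(\cdot)$ is a \emph{function}: two chains can merge but never split. Assuming $\gamma_v$ and $\gamma_w$ are not disjoint, I would let $d$ be the largest depth of a vertex lying on both, and call that vertex $u$ (it is unique at depth $d$, since each good path contains exactly one vertex of each depth $0,1,\dots$). By downward induction, $p^{i}(u)$ lies on both paths for every $i\leq d$, because the parent of a vertex is uniquely determined; hence $\gamma_u$ is a common initial segment of $\gamma_v$ and $\gamma_w$. Conversely, any shared vertex of $\gamma_v$ and $\gamma_w$ sits at the same position along each path (its position equals its depth, which is intrinsic), hence at depth $\leq d$, so it already lies on $\gamma_u$; and a shared edge forces its two endpoints to be shared, so it too lies on $\gamma_u$. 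Thus the overlap is exactly $\gamma_u$, consisting of vertices and edges of depth $\leq d\leq\min\{\mathrm{dep}\,v,\mathrm{dep}\,w\}$.

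The only delicate point is the last one — confirming that no ``accidental'' coincidence of a vertex of $\gamma_v$ at position $i$ with a vertex of $\gamma_w$ at position $j\neq i$ can occur — but this is ruled out because position along a good path is pinned down by depth. Everything else is bookkeeping, in keeping with the lemma being flagged as easy to check; I anticipate no genuine obstacle beyond stating the parent/forest structure cleanly enough that the merging argument in (iii) is transparent.
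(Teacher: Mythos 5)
Your proof is correct, and it supplies the details that the paper itself omits: the authors merely assert that the lemma is ``easy to check'' and give no argument. Your strategy---packaging the recursion of \Cref{definition:paths} as a parent function $p(\cdot)$ so that $\gamma_v$ has the normal form $e_{p^{k-1}(v)} \ast \cdots \ast e_{p(v)} \ast e_v$ with the $i$-th vertex sitting at depth exactly $i$---is the natural way to read the definition, and the three assertions do fall straight out of it. For (i) and (ii) the alternation of $+$/$-$ signs along parents and the case split at the last edge for $0$-vertices are handled correctly, including the vacuous $k=1$ case in (ii). For (iii) the two essential observations are both present and properly justified: first, that a shared vertex must sit at the same index on each path because that index is its depth, so there is no accidental alignment at mismatched positions; and second, that the parent function is single-valued once the good paths are fixed, so a shared vertex $u$ drags its entire ancestor chain $p(u),\dots,p^{\mathrm{dep}\,u}(u)$ into the overlap, forcing $\gamma_u$ to be the exact common initial segment. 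The shared-edge case reduces to shared endpoints. Nothing is missing.
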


Now, we will find an arcset for the vertex $v$ using the chosen good path 
$\gamma_v$. For an edge $e$ in $A\Gamma(\mathbb{D}_{f})$, $K_{e}$ denotes the 
basic arc $K^{|s(e)|,|t(e)|}$ associated to $e$ in \Cref{lem:+0-}. If 
necessary, we also denote its signs: $K^{|s(e)|,|t(e)|}_{e}$.

\begin{definition}
For a constant path $\gamma$ at any depth 0 vertex $v$ in 
$A\Gamma(\mathbb{D}_{f})$, we set $K_{\gamma} := K^{|v|}$ where $K^{|v|}$ is an 
arc introduced in \Cref{subsec:depth0}.
For a non-constant path $\gamma=e_{1} \dots e_{m}$ such that $\mathrm{dep}\, 
t(e_{i}) > 0$ for all $i$, we define the corresponding collection of basic arcs 
as follows. If the depth of $s(e_{1})$ is zero, then $K_{\gamma}$ is defined to 
be
$$K_{\gamma} := K^{|s(e_{1})|} \coprod_{i=1}^{m} K_{e_{i}}.$$
Otherwise, if the depth of $s(e_{1})$ is nonzero, then
$$K_{\gamma} := \coprod_{i=1}^{m} K_{e_{i}}.$$
\end{definition}

\begin{example}
Suppose that a divide is given by  \cref{fig:arcex} locally. Assume that the 
right-most $+$ region is of depth $3$ counting from the left-most $-$ region 
and $\gamma$ is a length $3$ path $e_{1} e_{2} e_{3}$ from that $-$ region to 
the right-most $+$ region. Then, $K_{\gamma}$ consists of $4$ disjoint basic 
arcs, which are two blue arcs and two orange arcs in \cref{fig:arcex}.
Red circle on the right is $V^+$.
\begin{figure}[h]
\includegraphics[scale=1]{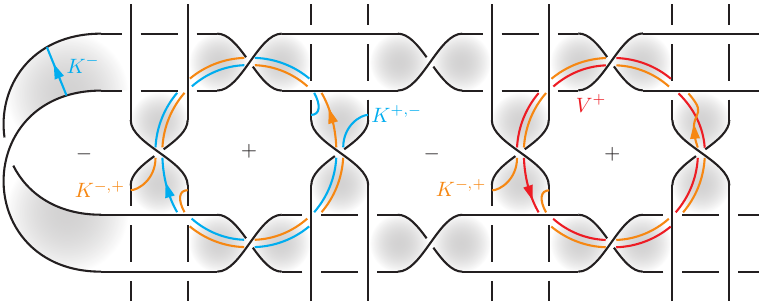}
\centering
\caption{$K_{\gamma}$ for depth $3$ $+$ vertex.}
\label{fig:arcex}
\end{figure}
\end{example}

%

We compute the intersection of the arcset $K_{\gamma_{v}}$ associated to the 
good path $\gamma_{v}$ and other vanishing cycles. 
Note that $K_{\gamma_{v}}$ may have several connected components. 
We will partition these components into smaller groups so that each group is  
one of the following: 
$$K^{-},  K^{-,+}, K^{-,0}, (K^{+} \sqcup K^{+,-}), (K^{-,+} \sqcup K^{+,-}), 
(K^{-,+} \sqcup K^{+,0}).$$

%

More precisely, for  8 kinds of non-constant good paths according to 
$|s(\gamma)|$,$|s(e_{m})|$, and $|t(e_{m})=t(\gamma)|$, 
$\gamma_{v}$ is decomposed as follows:
\begin{itemize}
\item $|s(\gamma)| = +$, $|t(\gamma)| = +$ : $m$ is even. $\gamma_{v}$ and 
$K_{\gamma_{v}}$ are given by
$$\gamma_{v} = e_{1}(e_{2}e_{3}) \dots (e_{m-2}e_{m-1})e_{m},$$
$$K_{\gamma_{v}} = (K^{|s(e_{1})|} \sqcup K_{e_{1}}) \sqcup (K_{e_{2}} \sqcup 
K_{e_{3}}) \dots (K_{e_{m-2}} \sqcup K_{e_{m-1}}) \sqcup K_{e_{m}}.$$
\item $|s(\gamma)| = +$, $|s(e_{m})|=+$, $|t(\gamma)| = 0$ : $m$ is odd. 
$\gamma_{v}$ and $K_{\gamma_{v}}$ are given by
$$\gamma_{v} = e_{1}(e_{2}e_{3}) \dots (e_{m-3}e_{m-2})(e_{m-1}e_{m}),$$
$$K_{\gamma_{v}} = (K^{|s(e_{1})|} \sqcup K_{e_{1}}) \sqcup (K_{e_{2}} \sqcup 
K_{e_{3}}) \dots (K_{e_{m-3}} \sqcup K_{e_{m-2}}) \sqcup (K_{e_{m-1}} \sqcup 
K_{e_{m}}).$$
\item $|s(\gamma)| = +$, $|s(e_{m})|=-$, $|t(\gamma)| = 0$ : $m$ is even. 
$\gamma_{v}$ and $K_{\gamma_{v}}$ are given by
$$\gamma_{v} = e_{1}(e_{2}e_{3}) \dots (e_{m-2}e_{m-1})e_{m},$$
$$K_{\gamma_{v}} = (K^{|s(e_{1})|} \sqcup K_{e_{1}}) \sqcup (K_{e_{2}} \sqcup 
K_{e_{3}}) \dots (K_{e_{m-2}} \sqcup K_{e_{m-1}}) \sqcup K_{e_{m}}.$$
\item $|s(\gamma)| = +$, $|t(\gamma)| = -$ : $m$ is odd. $\gamma_{v}$ and 
$K_{\gamma_{v}}$ are given by
$$\gamma_{v} = e_{1}(e_{2}e_{3}) \dots (e_{m-1}e_{m}),$$
$$K_{\gamma_{v}} = (K^{|s(e_{1})|} \sqcup K_{e_{1}}) \sqcup (K_{e_{2}} \sqcup 
K_{e_{3}}) \dots (K_{e_{m-1}} \sqcup K_{e_{m}}).$$
\item $|s(\gamma)| = -$, $|t(\gamma)| = +$ : $m$ is odd. $\gamma_{v}$ and 
$K_{\gamma_{v}}$ are given by
$$\gamma_{v} = (e_{1}e_{2}) \dots (e_{m-2}e_{m-1})e_{m},$$
$$K_{\gamma_{v}} = K^{|s(e_{1})|} \sqcup (K_{e_{1}} \sqcup K_{e_{2}}) \dots 
(K_{e_{m-2}} \sqcup K_{e_{m-1}}) \sqcup K_{e_{m}}.$$
\item $|s(\gamma)| = -$, $|s(e_{m})|=+$, $|t(\gamma)| = 0$ : $m$ is even. 
$\gamma_{v}$ and $K_{\gamma_{v}}$ are given by
$$\gamma_{v} = (e_{1}e_{2}) \dots (e_{m-1}e_{m}),$$
$$K_{\gamma_{v}} = K^{|s(e_{1})|} \sqcup (K_{e_{1}} \sqcup K_{e_{2}}) \dots 
(K_{e_{m-1}} \sqcup K_{e_{m}}).$$
\item $|s(\gamma)| = -$, $|s(e_{m})|=-$, $|t(\gamma)| = 0$ : $m$ is odd. 
$\gamma_{v}$ and $K_{\gamma_{v}}$ are given by
$$\gamma_{v} = (e_{1}e_{2}) \dots (e_{m-2}e_{m-1})e_{m},$$
$$K_{\gamma_{v}} = K^{|s(e_{1})|} \sqcup (K_{e_{1}} \sqcup K_{e_{2}}) \dots 
(K_{e_{m-2}} \sqcup K_{e_{m-1}}) \sqcup K_{e_{m}}.$$
\item $|s(\gamma)| = -$, $|t(\gamma)| = -$ : $m$ is even. $\gamma_{v}$ and 
$K_{\gamma_{v}}$ are given by
$$\gamma_{v} = (e_{1}e_{2}) \dots (e_{m-1}e_{m}),$$
$$K_{\gamma_{v}} = K^{|s(e_{1})|} \sqcup (K_{e_{1}} \sqcup K_{e_{2}}) \dots 
(K_{e_{m-1}} \sqcup K_{e_{m}}).$$
\end{itemize}

%
We have partitioned them so that each group has nice intersection properties  
(see the case of $(K^{-,+} \sqcup K^{+,-})$  in  \cref{fig:arcex}). 

\begin{lemma} \label{lem:canceled}
Consider a path $\gamma$ in $A\Gamma(\mathbb{D}_{f})$.
\begin{enumerate}
\item Suppose that $\gamma = e_{1}$, $\mathrm{dep}\, s(\gamma)=0$, 
$|s(e_{1})|=+$, and $|t(e_{1})|=-$ for some edge $e_{1}$. $K_{\gamma}$ is 
defined to be $K_{s(e_{1})}^{+} \sqcup K_{e_{1}}^{+,-}$. Then, $K_{\gamma} 
\bullet V_{t(e_{1})} = 1$ and $K_{\gamma} \bullet V = 0$ for any other 
vanishing cycle $V$.
\item Suppose that $\gamma = e_{1}e_{2}$, $\mathrm{dep}\, s(\gamma) \geq 1$, 
$|s(e_{1})|=-$, $|t(e_{1})|=+$, and $|t(e_{2})|=-$. $K_{\gamma}$ is defined to 
be $K_{e_{1}}^{-,+} \sqcup K_{e_{2}}^{+,-}$. Then, $K_{\gamma} \bullet 
V_{s(e_{1})} = -1$, $K_{\gamma} \bullet V_{t(e_{2})} = 1$, and $K_{\gamma} 
\bullet V = 0$ for any other vanishing cycle $V$.
\item Suppose that $\gamma = e_{1}e_{2}$, $\mathrm{dep}\, s(\gamma) \geq 1$, 
$|s(e_{1})|=-$, $|t(e_{1})|=+$, and $|t(e_{2})|=0$. $K_{\gamma}$ is defined to 
be $K_{e_{1}}^{-,+} \sqcup K_{e_{2}}^{+,0}$. Then, $K_{\gamma} \bullet 
V_{s(e_{1})} = -1$, $K_{\gamma} \bullet V = 1$ for $V \in \mathcal{R}_{e_{1}} 
\setminus \mathcal{R}_{e_{2}}$, and $K_{\gamma} \bullet V = 0$ for any other 
vanishing cycle $V$.
\end{enumerate}

\begin{proof}
Let us consider the second case. Note that $K_{e_{1}}^{-,+}$ intersects 
positively any relevant vanishing cycle and $K_{e_{2}}^{+,-}$ intersects 
negatively any relevant vanishing cycle. If a vanishing cycle $V$ is in both 
$\mathcal{R}_{e_{1}}$ and $\mathcal{R}_{e_{2}}$, then $K_{\gamma} \bullet V = 
0$. Hence, we get the following:
$$K_{\gamma} \bullet V = \begin{cases}
1 & (V \in \mathcal{R}_{e_{1}} \setminus \mathcal{R}_{e_{2}}), \\
-1 & (V \in \mathcal{R}_{e_{2}} \setminus \mathcal{R}_{e_{1}}), \\
0 & (V \in \mathcal{R}_{e_{1}} \cap \mathcal{R}_{e_{2}}), \\
0 & (V \notin \mathcal{R}_{e_{1}} \cup \mathcal{R}_{e_{2}}).
\end{cases}$$
One can check that $\mathcal{R}_{e_{1}} \setminus \mathcal{R}_{e_{2}} = 
\{V_{t(e_{2})} \}$ and $\mathcal{R}_{e_{2}} \setminus \mathcal{R}_{e_{1}} = 
\{V_{s(e_{1})} \}$ from \Cref{definition:relevant}. The other cases can be 
proved similarly.
\end{proof}
\end{lemma}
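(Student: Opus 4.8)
The plan is to reduce the three assertions to elementary comparisons between the relevant-vertex sets of \Cref{definition:relevant}, and then to settle those comparisons edge by edge. In each case $K_\gamma$ is a disjoint union of two arcs whose intersection number with \emph{every} vanishing cycle is already recorded: by \Cref{lem:+-} the arc $K^{+}_{s(e_1)}$ attached to a depth-$0$ vertex meets a vanishing cycle $V$ with sign $+1$ exactly when $V$ is relevant to $s(e_1)$, and with $0$ otherwise; by \Cref{lem:+0-} each oriented basic arc meets every vanishing cycle relevant to its edge with a fixed sign -- $+1$ for $K^{-,+},K^{-,0},K^{0,+}$ and $-1$ for $K^{+,-},K^{0,-},K^{+,0}$ -- and $0$ with all the others. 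Since the two arcs of $K_\gamma$ are disjoint, $K_\gamma\bullet V$ is the sum of these two contributions; hence it equals $+1$ on the weight-$(+1)$ relevant set minus the other, equals $-1$ on the weight-$(-1)$ relevant set minus the other, and vanishes elsewhere -- in particular on the \emph{intersection} of the two relevant sets, where the $+1$ and $-1$ contributions cancel. That cancellation is what the name of the lemma records and is the whole reason for grouping $K_{\gamma_v}$ into such pairs.

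It then remains to identify the two relevant sets in each case and compute the two set differences. For (i), $s(e_1)=v_+$ is a depth-$0$ $+$ vertex joined by $e_1$ to $t(e_1)=v_-$, so \Cref{definition:relevant}(iv),(ii) give $\mathcal{R}_{s(e_1)}=\{v:v\ \text{adjacent to}\ v_+\}$ and $\mathcal{R}_{e_1}=\mathcal{R}_{s(e_1)}\setminus\{v_-\}$; thus $\mathcal{R}_{s(e_1)}\setminus\mathcal{R}_{e_1}=\{v_-\}=\{V_{t(e_1)}\}$, $\mathcal{R}_{e_1}\setminus\mathcal{R}_{s(e_1)}=\emptyset$, and since $K^{+}_{s(e_1)}$ carries weight $+1$ this gives $K_\gamma\bullet V_{t(e_1)}=1$ with all other intersections $0$. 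Case (ii), worked out in the text, is the pattern $K^{-,+}_{e_1}\sqcup K^{+,-}_{e_2}$: both $\mathcal{R}_{e_1}$ and $\mathcal{R}_{e_2}$ equal $\{v:v\ \text{adjacent to the common}\ +\ \text{vertex}\ t(e_1)=s(e_2)\}$ with one vertex removed -- $s(e_1)$ from $\mathcal{R}_{e_1}$ and $t(e_2)$ from $\mathcal{R}_{e_2}$, these being distinct because $s(e_1),t(e_2)$ have different depths along the good sub-path $e_1e_2$ -- so the two differences are the singletons $\{V_{t(e_2)}\}$ and $\{V_{s(e_1)}\}$, on which the weights are $+1$ and $-1$.

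For (iii), write $v_+:=t(e_1)=s(e_2)$ and $v_0:=t(e_2)$; then \Cref{definition:relevant}(ii),(i) give $\mathcal{R}_{e_1}=\{v:v\ \text{adjacent to}\ v_+\}\setminus\{s(e_1)\}$ and $\mathcal{R}_{e_2}=\{v:v\ \text{adjacent to}\ v_+\ \text{but not to}\ v_0\}\cup\{v_+\}$. On $\mathcal{R}_{e_1}\setminus\mathcal{R}_{e_2}$ only $K^{-,+}_{e_1}$ contributes (weight $+1$), on $\mathcal{R}_{e_1}\cap\mathcal{R}_{e_2}$ the contributions cancel, and off $\mathcal{R}_{e_1}\cup\mathcal{R}_{e_2}$ both vanish, so the whole statement comes down to $\mathcal{R}_{e_2}\setminus\mathcal{R}_{e_1}=\{V_{s(e_1)}\}$, that is, to the single assertion that $s(e_1)\in\mathcal{R}_{e_2}$, i.e.\ that $s(e_1)$ is \emph{not} adjacent to $v_0$. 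This is the one step that is not pure bookkeeping, and it is where the structure of good paths enters: along the good sub-path $e_1e_2$ the depth rises by $1$ at each step, so $\mathrm{dep}\,v_0=\mathrm{dep}\,s(e_1)+2$, whereas two adjacent vertices of $A\Gamma(\mathbb{D}_f)$ have depths differing by at most $1$ -- a depth-$0$ vertex sits on the outer face of the diagram, so deleting it places all of its neighbours on the new outer face, and iterating this through \Cref{definition:depth} bounds each vertex's depth by its distance to a depth-$0$ vertex -- so an adjacency $s(e_1)\sim v_0$ would force $\mathrm{dep}\,v_0\le\mathrm{dep}\,s(e_1)+1$, a contradiction. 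Granting this, the sign bookkeeping produces exactly the values asserted in (iii). The three cases (i)--(iii) are precisely the three two-arc groups occurring in the partition of $K_{\gamma_v}$, while the singleton groups $K^{-},K^{-,+},K^{-,0}$ are immediate from \Cref{lem:+0-} and \Cref{lem:+-}.
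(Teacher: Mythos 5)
Your proof is correct and follows the same cancellation-on-relevant-sets route as the paper: decompose each $K_\gamma$ into its two disjoint pieces, read off the signed intersection numbers from \Cref{lem:+-} and \Cref{lem:+0-}, cancel on the common part of the two relevant sets, and identify the remaining symmetric differences from \Cref{definition:relevant}. Where you go beyond the paper is in making explicit what the clause ``the other cases can be proved similarly'' glosses over. Case (i) really is a routine variant of case (ii), since $\mathcal{R}_{s(e_1)}$ and $\mathcal{R}_{e_1}$ are both the neighbourhood of the same $+$ vertex with at most one element deleted. Case (iii) is \emph{not} a carbon copy: because $e_2$ joins a $+$ vertex to a $0$ vertex, $\mathcal{R}_{e_2}$ is no longer of the form ``neighbourhood of $v_+$ minus one vertex'' but the set of neighbours of $v_+$ that are not adjacent to $v_0$, together with $v_+$. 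Your reduction of case (iii) to the single assertion $\mathcal{R}_{e_2}\setminus\mathcal{R}_{e_1}=\{s(e_1)\}$, and thence to the non-adjacency of $s(e_1)$ and $t(e_2)$, is exactly the point the paper does not write out, and the depth argument you supply is the right reason.

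Two caveats. First, as you yourself note, the depth argument uses that $e_1e_2$ sits inside a good path, so that depth rises strictly along the edges; the lemma as stated assumes only $\mathrm{dep}\,s(\gamma)\geq 1$, and without this monotonicity the conclusion of (iii) could actually fail, since if the $-$ region $s(e_1)$ happened to border the double point $t(e_2)$ then $s(e_1)\notin\mathcal{R}_{e_2}$ and the claimed intersection $K_\gamma\bullet V_{s(e_1)}=-1$ would not hold. In practice the lemma is only invoked on sub-paths of good paths (compare the explicit hypothesis in \Cref{lem:good}), so this is an implicit assumption rather than an error, but your restriction is the honest reading. Second, the auxiliary claim that adjacent vertices of $A\Gamma(\mathbb{D}_f)$ have depths differing by at most one is asserted with only a sketch (``deleting a depth-$0$ vertex places all of its neighbours on the new outer face''); this is the same kind of unstated geometric fact already relied upon in \Cref{definition:paths}, so it is consistent with the paper's level of rigour, but a complete proof would have to track how removing a region or double point merges its face with the unbounded component. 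Neither caveat is a gap relative to the source; both are places where you have correctly identified where the paper leans on unspoken structure of divides.
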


Let us summarize what we have done.
Given an A'Campo divide and its associated  $A\Gamma(\mathbb{D}_{f})$, we have 
chosen good paths $\gamma_v$'s for all vertices $v$'s.
Along $\gamma_v$, we have chosen a family of disjoint properly embedded arcs to 
obtain an arcset $K_{v} := K_{\gamma_{v}}$.

Now, we plan to apply \Cref{prop:excep} to produce a desired topological 
exceptional collection. 
We need to make three assumptions of the proposition to hold. Let us first work 
on the second condition (ii), the adapted condition (see 
\Cref{definition:adapted}).

\begin{prop} \label{thm:adapted}
The family of arcsets $\overrightarrow{K}_{f} = (K^{-}_{1}, \dots, 
K^{-}_{n_{-}}, K^{0}_{1}, \dots, K^{0}_{n_{0}}, K^{+}_{1}, \dots, 
K^{+}_{n_{+}})$ is adapted to the distinguished collection 
$\overrightarrow{V}_{f} = (V^{-}_{1}, \dots, V^{-}_{n_{-}}, V^{0}_{1}, \dots, 
V^{0}_{n_{0}}, V^{+}_{1}, \dots, V^{+}_{n_{+}})$.
\end{prop}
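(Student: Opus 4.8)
Since $n=2$ here, the sign $-(-1)^{n(n+1)/2}$ appearing in \Cref{definition:adapted} equals $+1$, and, because $A\Gamma(\mathbb{D}_f)$ has no edge between two vertices of the same type, $V_v\bullet V_w=0$ whenever $|v|=|w|$. So the three conditions of \Cref{definition:adapted} are equivalent to the single assertion that for all vertices $v,w$,
\[
K_{\gamma_v}\bullet V_w=
\begin{cases}
\delta_{vw}&\text{if }|v|=|w|,\\
V_v\bullet V_w&\text{if }|w|<|v|,\\
0&\text{if }|w|>|v|,
\end{cases}
\]
with the order $|{-}|<|0|<|{+}|$. The plan is to prove this identity by analysing the block decomposition of $K_{\gamma_v}$ set up in \Cref{sec:arcsetgeneral}.

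First I would dispose of the depth-$0$ vertices, where $K_{\gamma_v}=K^{|v|}$ is the single arc of \Cref{subsec:depth0}: its listed intersection numbers (equivalently \Cref{lem:+-} together with \Cref{definition:relevant}(iv)--(v), and the orientation normalisation of \Cref{thm:ACampo}(2)) are exactly the right-hand side. For a vertex $v$ of depth $k\ge 1$, write the chosen good path as $\gamma_v=e_1\cdots e_m$ through vertices $w_0=s(e_1),w_1,\dots,w_m=v$; by the inductive construction in \Cref{definition:paths} one has $\mathrm{dep}\,w_i=i$. Recall that $K_{\gamma_v}$ is a disjoint union of blocks, each of one of the six shapes $K^-$, $K^{-,+}$, $K^{-,0}$, $K^+\sqcup K^{+,-}$, $K^{-,+}\sqcup K^{+,-}$, $K^{-,+}\sqcup K^{+,0}$, with $K^-$ or $K^+\sqcup K^{+,-}$ as the initial block, interior blocks of the form $K^{-,+}_{e_i}\sqcup K^{+,-}_{e_{i+1}}$, and a terminal block fixed by $|t(\gamma_v)|=|v|$. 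The core step is to compute each block's intersection vector against $\{V_w\}$ using \Cref{lem:+0-}, \Cref{lem:+-} and \Cref{lem:canceled}, and to observe that these vectors telescope: an interior block hits only $V_{w_{i-1}}$ (with $-1$) and $V_{w_{i+1}}$ (with $+1$), the initial block hits only $V_{w_0}$ or $V_{w_1}$ (with $+1$), so at each interior path vertex the two blocks meeting there cancel and only the terminal block survives.

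What keeps the telescoping clean is the depth stratification: since $\gamma_v$ strictly increases in depth, $w_i$ and $w_j$ are non-adjacent in $A\Gamma(\mathbb{D}_f)$ whenever $|i-j|\ge 2$ --- a vertex of depth $i$ adjacent to one of depth $\le i-2$ would itself have depth $\le i-1$ by \Cref{definition:depth}. This is precisely the input that makes \Cref{lem:canceled}(iii) hold unconditionally (there $s(e_1)=w_{m-2}$ has depth $k-2$ while $t(e_2)=v$ has depth $k$), and it ensures the ``spreading'' relevant set of the terminal block never meets an already-cancelled interior vertex $w_i$ with $i\le m-2$. Running through the eight shapes of good path, indexed by $(|s(\gamma_v)|,|s(e_m)|,|t(\gamma_v)|)$, one checks the terminal block directly: if $|v|={-}$ it is $K^{-,+}_{e_{m-1}}\sqcup K^{+,-}_{e_m}$, hitting $V_v$ with $+1$ and $V_{w_{m-2}}$ with $-1$ (cancelled by the preceding block), so $K_{\gamma_v}\bullet V_w=\delta_{vw}$; if $|v|={+}$ it is $K^{-,+}_{e_m}$ with $\mathcal R_{e_m}=\{w:w\text{ adj }v\}\setminus\{w_{m-1}\}$, so, adding the $+1$ at $w_{m-1}$ from the preceding block, $K_{\gamma_v}\bullet V_w=+1$ exactly for $w$ adjacent to $v$ (including $w=v$); and if $|v|=0$ it is $K^{-,+}_{e_{m-1}}\sqcup K^{+,0}_{e_m}$ or $K^{-,0}_{e_m}$, where I would use that at a double point of the divide each incident $+$-region and each incident $-$-region share a segment issuing from it --- hence are joined by an edge of $A\Gamma(\mathbb{D}_f)$ --- to identify the surviving relevant set with $\{v\}\cup\{w\text{ adj }v:\ |w|={-}\}$. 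In every case the surviving $+1$'s match $V_v\bullet V_w$ by the sign normalisation of \Cref{thm:ACampo}(2), and every entry with $|w|>|v|$ is $0$ because no $+$ vertex is adjacent to another $+$ vertex and no $0$ vertex to another $0$ vertex.

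The real work, and where I expect the difficulty, is the bookkeeping in the $|v|=0$ cases: showing that the terminal block's surviving relevant set is exactly $\{v\}$ together with the $-$-neighbours of $v$ --- in particular that it omits the incident $+$-vertex $w_{m-1}$, omits every $0$-vertex, and (via non-adjacency of far depths) omits every $w_i$ with $i\le m-2$ --- and checking uniformly over the eight path shapes that no earlier block leaks an intersection onto $V_v$ or onto any $V_w$ with $|w|>|v|$; both reduce to \Cref{lem:canceled} once the non-adjacency-of-far-depths fact is in hand. Finally, the freedom in the ordering of same-type vertices is harmless exactly because $V_v\bullet V_w=0$ when $|v|=|w|$: in that range the identity reads $K_{\gamma_v}\bullet V_w=\delta_{vw}$, with no dependence on which index is larger. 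Feeding the identity into \Cref{definition:adapted} with $n=2$ yields the three adapted conditions, which is the assertion of the proposition.
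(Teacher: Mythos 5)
Your argument is essentially the paper's: decompose $K_{\gamma_v}$ into the same blocks, run the telescoping cancellation through \Cref{lem:canceled}, and check the terminal block by the type of $t(\gamma_v)$, reading off that the surviving intersections match $V_v\bullet V_w$ via \Cref{thm:ACampo}. The one genuine improvement is that you state explicitly the two facts the paper uses tacitly --- that along a good path vertices of depth difference $\ge 2$ are non-adjacent, and that at a double point of the divide every incident $+$-region is $A\Gamma$-adjacent to every incident $-$-region --- which is exactly what is needed to pin down the terminal relevant set in the $|v|=0$ cases that the paper leaves as ``similar.''
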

\begin{proof}
Let $v$ be a vertex in $A\Gamma(\mathbb{D}_{f})$. Then, we will show that the 
arcset $K_{v} := K_{\gamma_{v}}$ satisfies all intersection conditions in 
\Cref{definition:adapted}.

First, we consider the case where $|s(\gamma)| = +$, $|t(\gamma)=v| = +$. In 
this case, $K_{\gamma_{v}}$ is decomposed into $(K_{s(e_{1})}^{+} \sqcup 
K_{e_{1}}^{+,-}) \sqcup (K_{e_{2}}^{-,+} \sqcup K_{e_{3}}^{+,-}) \dots 
(K_{e_{m-2}}^{-,+} \sqcup K_{e_{m-1}}^{+,-}) \sqcup K_{e_{m}}^{-,+}$. The first 
piece $(K_{s(e_{1})}^{+} \sqcup K_{e_{1}}^{+,-})$ intersects only 
$V_{t(e_{1})}$ and $(K_{s(e_{1})}^{+} \sqcup K_{e_{1}}^{+,-}) \bullet 
V_{t(e_{1})} = 1$ by \Cref{lem:canceled} (i). The next one $(K_{e_{2}}^{-,+} 
\sqcup K_{e_{3}}^{+,-})$ also intersects $V_{t(e_{1})=s(e_{2})}$, but 
$(K_{e_{2}}^{-,+} \sqcup K_{e_{3}}^{+,-}) \bullet V_{t(e_{1})} = -1$ by 
\Cref{lem:canceled} (ii). Since the depth of $s(e_{i})$ increases 
monotonically, the vanishing cycle $V_{t(e_{1})=s(e_{2})}$ only intersect these 
two pieces. Thus, we get $K_{\gamma} \bullet V_{t(e_{1})}  = 0$.

The second piece $(K_{e_{2}}^{-,+} \sqcup K_{e_{3}}^{+,-})$ intersects 
$V_{t(e_{3})}$ positively but it is canceled algebraically by the intersection 
from the third piece (see \Cref{lem:canceled} (ii)). Hence, we have $K_{\gamma} 
\bullet V_{t(e_{3})}  = 0$. In this way, one can see that the vanishing cycle 
$V_{t(e_{2k-1})}$, $1 \leq k \leq \frac{m-2}{2}$, has nontrivial intersections 
with two pieces but they are canceled.

The vanishing cycles in $\mathcal{R}_{e_{m}}$ and $V_{t(e_{m-1})}$ are the 
remaining nontrivial ones. Note that they are exactly the vanishing cycles 
intersecting the vanishing cycle $V_{v}$. As $|v| = +$, we need to show that 
$K_{\gamma_{v}} \bullet V = V_{v} \bullet V = 1$ for any vanishing cycle $V$ in 
$\mathcal{R}_{e_{m}}$ or $V_{t(e_{m-1})}$, which is induced from (ii) in 
\Cref{thm:ACampo} and (i) in \Cref{definition:adapted}. For $V_{t(e_{m-1})}$, 
it only belongs to $\mathcal{R}_{e_{m-2}}$ and $K_{\gamma_{v}} \bullet 
V_{t(e_{m-1})} = K_{e_{m-2}}^{-,+} \bullet V_{t(e_{m-1})} = 1$. For $V$ in 
$\mathcal{R}_{e_{m}}$, it may belong to $\mathcal{R}_{e_{m-2}}$ and 
$\mathcal{R}_{e_{m-1}}$ also. But these two intersections are canceled as in 
\Cref{lem:canceled}. Therefore, $K_{\gamma_{v}} \bullet V = K_{e_{m}}^{-,+} 
\bullet V = 1$ holds for any $V$ by construction. These arguments prove (i) in 
\Cref{definition:adapted}.

The condition (ii) in \Cref{definition:adapted} follows from that there are 
only $0$ and $-$ vertices in $\mathcal{R}_{e_{m}}$ and $V_{t(e_{m-1})}$ 
corresponds to the $-$ vertex. Hence, $K_{\gamma_{v}}$ does not intersect any 
vanishing cycle $V$ corresponding to some $+$ vertex.

Lastly, since the vanishing cycle $V_{v}$ is only relevant to the edge $e_{m}$, 
the condition (iii) in \Cref{definition:adapted} holds by \Cref{lem:+0-}. 
Therefore, we prove the statement for the first case where $|s(\gamma)| = +$, 
$|t(\gamma)=v| = +$.

Next, we consider the second case where $|s(\gamma)| = +$, $|s(e_{m})|=+$, 
$|t(\gamma) = v| = 0$. Then, $K_{\gamma_{v}}$ is given by $(K_{s(e_{1})}^{+} 
\sqcup K_{e_{1}}^{+,-}) \sqcup (K_{e_{2}}^{-,+} \sqcup K_{e_{3}}^{+,-}) \dots 
(K_{e_{m-1}}^{-,+} \sqcup K_{e_{m}}^{+,0})$. The proof in the first case work 
similarly so that most intersections are canceled algebraically. Therefore, we 
need to consider the vanishing cycles intersecting the last piece 
$(K_{e_{m-1}}^{-,+} \sqcup K_{e_{m}}^{+,0})$ only. By (iii) of 
\Cref{lem:canceled}, such vanishing cycles are in $\mathcal{R}_{e_{m-1}} 
\setminus \mathcal{R}_{e_{m}}$. This set $\mathcal{R}_{e_{m-1}} \setminus 
\mathcal{R}_{e_{m}}$ consists of the $0$ vertex $t(\gamma)$ and two $-$ 
vertices adjacent to $t(\gamma)$. Since $v$ is the $0$ vertex, it implies 
exactly the conditions (i) and (iii) in \Cref{definition:adapted} and the 
condition (ii) follows from the fact $K_{\gamma_{v}} \bullet V = 0$ for any 
vanishing cycle $V \notin \mathcal{R}_{e_{m-1}} \setminus \mathcal{R}_{e_{m}}$.

The last case we prove is $|s(\gamma)| = -$, $|t(\gamma)=v| = -$. In this case, 
$K_{\gamma_{v}} = K^{-}_{s(e_{1})} \sqcup (K_{e_{1}}^{-,+} \sqcup 
K_{e_{2}}^{+,-}) \dots (K_{e_{m-1}}^{-,+} \sqcup K_{e_{m}}^{+,-})$. 
$K^{-}_{s(e_{1})}$ only intersects $V_{s(e_{1})}$ satisfying $K^{-}_{s(e_{1})} 
\bullet V_{s(e_{1})} = 1$ by \Cref{lem:+-} but this intersection is canceled by 
the second piece $(K_{e_{1}}^{-,+} \sqcup K_{e_{2}}^{+,-})$. Thus, similarly, 
the only nontrivial vanishing cycle is $V_{v}^{-}$ intersecting the last piece 
$(K_{e_{m-1}}^{-,+} \sqcup K_{e_{m}}^{+,-})$. By (ii) of \Cref{lem:canceled}, 
$K_{\gamma_{v}} \bullet V_{v}^{-} = 1$, which implies (iii) in 
\Cref{definition:adapted}, and $K_{\gamma_{v}} \bullet V = 0$ for the other 
vanishing cycle $V$, which gives the other conditions.

There are five remaining cases, but they can be proved in a similar way by a 
combination of the above arguments.
\end{proof}

%
%

Next, we work on the assumption (iii) of \Cref{prop:excep}. Namely, we show 
that $K_v$ is a geometric vanishing arcset for every vertex $v$.
For this, it is enough to show that $K_v$ is linear (see 
\Cref{definition:linear}) 
by \Cref{lem:linear}.


We start with the following intersection computations between basic arcs and 
their monodromy images. When we will consider a good path and corresponding 
arcset, these serve as local descriptions about possible intersections between 
its components and monodromy images. Here we only consider intersection points 
away from the boundary $\partial M_{f}$.
\begin{lemma} \label{lem:good}
Let $\gamma$ be a  path of  distinct edges such that $\mathrm{dep} \; s(e) < 
\mathrm{dep} \; t(e)$ for any edge $e$ consisting of $\gamma$ (or simply, be a subpath of any good path).
\begin{enumerate}
\item Let $\gamma = e_{1} e_{2} e_{3} e_{4}$ such that $|s(e_{1})| = +$, 
$|s(e_{2})| = -$, $|s(e_{3})| = +$, $|s(e_{4})| = -$ and $|t(e_{4})| = +$. 
Then, there are two intersection points between basic arcs and their monodromy 
images. $\varphi_{f}(K_{e_{1}}^{+,-})$ only intersects $K_{e_{3}^{+,-}}$ once 
and $\varphi_{f}(K_{e_{1}}^{+,-}) \bullet K_{e_{3}}^{+,-} = -1$. Similarly, 
$\varphi_{f}(K_{e_{4}}^{-,+})$ only intersects $K_{e_{2}}^{-,+}$ once and 
$\varphi_{f}(K_{e_{4}}^{-,+}) \bullet K_{e_{2}}^{-,+} = -1$.
\item Let $\gamma = e_{1} e_{2} e_{3}$ such that $|s(e_{1})| = +$, $|s(e_{2})| 
= -$, $|s(e_{3})| = +$, and $|t(e_{3})| = 0$. Then, there is one intersection 
point between basic arcs and their monodromy images. 
$\varphi_{f}(K_{e_{1}}^{+,-})$ only intersects $K_{e_{3}}^{+,0}$ once and 
$\varphi_{f}(K_{e_{1}}^{+,-}) \bullet K_{e_{3}}^{+,0} = -1$.
\item Let $\gamma = e_{1} e_{2} e_{3}$ such that $|s(e_{1})| = -$, $|s(e_{2})| 
= +$, $|s(e_{3})| = -$, and $|t(e_{3})| = 0$. Then, there is one intersection 
point between basic arcs and their monodromy images. 
$\varphi_{f}(K_{e_{3}}^{-,0})$ only intersects $K_{e_{1}}^{-,+}$ once and 
$\varphi_{f}(K_{e_{3}}^{-,0}) \bullet K_{e_{1}}^{-,+} = -1$.
\end{enumerate}
\begin{proof}
Recall that the monodromy images of the basic arcs are described after 
\Cref{lem:+-}. Then, all the results follow from those descriptions. We give a 
schematic figure for each case; see \cref{fig:mon1}, \cref{fig:mon2}, and 
\cref{fig:mon3}.
\end{proof}
\end{lemma}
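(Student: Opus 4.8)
The plan is to compute $\varphi_f$ on each basic arc occurring in $\gamma$ by reducing the monodromy to at most two Dehn twists, to recognize that image and the remaining basic arcs as standard parallel copies of distinguished vanishing cycles away from the building blocks, and then to read the intersection numbers off \Cref{thm:ACampo}(2); the one genuinely delicate point is a local verification inside the building blocks.

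First I would recall, from the analysis of monodromy images of basic arcs in \Cref{sec:basic} together with \Cref{lem:+0-}, that each basic arc is (up to the relevant isotopy) disjoint from all but one of the $+$ vanishing cycles and, after the twist along that one, from all but one of the $0$ and $-$ vanishing cycles: for instance $K^{+,-}_{e_1}$ meets only $V^+_{s(e_1)}$, and $\tau_{V^+_{s(e_1)}}(K^{+,-}_{e_1})$ meets only $V^-_{t(e_1)}$. Since $\varphi_f=\tau_{V^-_1}\circ\cdots\circ\tau_{V^+_{n_+}}$ is a product of right Dehn twists ordered by type, all but those two twists act trivially, so $\varphi_f(K^{+,-}_{e_1})=\tau_{V^-_{t(e_1)}}\circ\tau_{V^+_{s(e_1)}}(K^{+,-}_{e_1})$, and \cref{fig:K+-mon} identifies this, up to isotopy relative to the boundary, with a parallel copy of $-V^-_{t(e_1)}$ outside a neighborhood of the building blocks it passes through. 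The same discussion (via \cref{fig:K+-mon}, \cref{fig:K+0mon}, \cref{fig:K0-mon}) identifies the other monodromy images needed --- $\varphi_f(K^{-,+}_{e_4})$ in case (i), $\varphi_f(K^{+,-}_{e_1})$ in case (ii), $\varphi_f(K^{-,0}_{e_3})$ in case (iii) --- with parallel copies of $V^-_{s(e_4)}$, $-V^-_{t(e_1)}$, $-V^-_{s(e_3)}$ respectively; dually, each basic arc is itself, outside the building blocks, a parallel copy of a vanishing cycle with a definite orientation (as in the proof of \Cref{prop:key}): $K^{+,-}_{e_3}$ and $K^{+,0}_{e_3}$ of $-V^+_{s(e_3)}$, $K^{-,+}_{e_2}$ of $V^+_{t(e_2)}$, $K^{-,+}_{e_1}$ of $V^+_{t(e_1)}$.

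With both arcs reduced to parallel copies of specific vanishing cycles, I would invoke \Cref{thm:ACampo}(2): two vanishing cycles cross transversally in exactly one point precisely when the corresponding vertices are joined by an edge, and with the chosen orientations $V^+_i\bullet V^0_j=V^0_j\bullet V^-_k=V^+_k\bullet V^-_i=+1$. In case (i), the vertices $t(e_1)$ and $s(e_3)$ are the two endpoints of $e_2$ (since $t(e_1)=s(e_2)$ and $t(e_2)=s(e_3)$), so $V^-_{t(e_1)}$ and $V^+_{s(e_3)}$ meet once, whence
\[
\varphi_f(K^{+,-}_{e_1})\bullet K^{+,-}_{e_3}=(-V^-_{t(e_1)})\bullet(-V^+_{s(e_3)})=V^-_{t(e_1)}\bullet V^+_{s(e_3)}=-1 .
\]
The second assertion of (i) is the same argument with $e_3$ (joining $t(e_2)$ and $t(e_3)=s(e_4)$) in place of $e_2$, and cases (ii) and (iii) are identical with $e_2$ again the connecting edge; in each case the sign is pinned down by the conventions of \Cref{thm:ACampo}(2).

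It remains to rule out all other intersections among $K_{e_1},\dots$ and their monodromy images. By \Cref{thm:ACampo}(2) the only vanishing cycles that $V^-_{t(e_1)}$, carrying $\varphi_f(K^{+,-}_{e_1})$, can meet among those carrying the other basic arcs of $\gamma$ are $V^+_{s(e_1)}$ (edge $e_1$) and $V^+_{t(e_2)}$ (edge $e_2$, and this cycle carries both $K_{e_2}$ and $K_{e_3}$), so a priori only $K_{e_1}$, $K_{e_2}$, $K_{e_3}$ are at risk; one checks from the explicit local models in the building blocks --- which is exactly the content of \cref{fig:mon1}, \cref{fig:mon2}, \cref{fig:mon3} --- that the portions of the vanishing cycles actually carried by $\varphi_f(K^{+,-}_{e_1})$, $K_{e_1}$ and $K_{e_2}$ are disjoint away from the boundary (the would-be intersection points of the full cycles lie in building blocks omitted by one or the other arc) and that the single remaining intersection with $K_{e_3}$ bounds no bigon; the bigon criterion for properly embedded arcs \cite[Section 1.2.7]{Farb} then converts this into the stated geometric count, and the same bookkeeping disposes of the remaining cases. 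I expect this last step to be the main obstacle: the conceptual core --- reducing $\varphi_f$ to two Dehn twists, identifying the results as parallel copies of vanishing cycles, and applying \Cref{thm:ACampo}(2) --- is robust, but certifying that no spurious intersections or removable bigons are hidden in the building blocks is genuinely a finite inspection of the local pictures rather than a formula, and the orientation bookkeeping yielding the sign $-1$, though routine, must be carried out consistently with the conventions of \Cref{thm:ACampo}(2).
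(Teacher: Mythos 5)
Your proposal tracks the paper's proof closely: the paper's argument is precisely to reduce $\varphi_f$ on each basic arc to the two Dehn twists identified after \Cref{lem:+-} and then inspect \cref{fig:mon1}, \cref{fig:mon2}, \cref{fig:mon3}, and your ``parallel copy of a vanishing cycle'' bookkeeping is a more explicit organization of the same idea (it is exactly the content of the proof of \Cref{prop:key}, which you cite). The added conceptual layer of reading the sign off \Cref{thm:ACampo}(2) is a nice way to predict the answer, and your acknowledgment that the unavoidable residual work is a finite inspection of the local models in the building blocks matches what the paper's figures carry.

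One concrete sign slip: you write that $\varphi_f(K^{-,0}_{e_3})$ is a parallel copy of $-V^{-}_{s(e_3)}$, but the correct orientation is $+V^{-}_{s(e_3)}$. Indeed $K^{-,0}$ is defined as the orientation reversal of $K^{0,-}$, and by the same reasoning as in the proof of \Cref{prop:key} one has $\varphi_f(K^{0,-}) \approx -V^{-}$ (parallel to $\varphi_f(K^{+,-}) \approx -V^{-}$); reversing gives $\varphi_f(K^{-,0}) \approx +V^{-}$. Also observe the pattern in your own list: you correctly assign $\varphi_f(K^{-,+}_{e_4}) \approx +V^-_{s(e_4)}$ and $\varphi_f(K^{+,-}_{e_1}) \approx -V^-_{t(e_1)}$, so the orientation-reversed types carry the positive sign; $K^{-,0}$ should follow suit. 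With your sign, case (iii) would evaluate to
\[
(-V^-_{s(e_3)})\bullet V^+_{t(e_1)} = -\left(V^-_{s(e_3)}\bullet V^+_{t(e_1)}\right) = -(-1) = +1,
\]
contradicting the lemma; with the corrected sign one gets $V^-_{s(e_3)}\bullet V^+_{t(e_1)} = -1$ as stated. Since you only wrote out case (i) explicitly and flagged the bookkeeping as needing care, this is a fixable slip rather than a structural gap, but it is exactly the kind of detail that your final paragraph warns about, so it is worth correcting before claiming cases (ii) and (iii) as ``identical.''
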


\begin{figure}[h]
\includegraphics[scale=0.8]{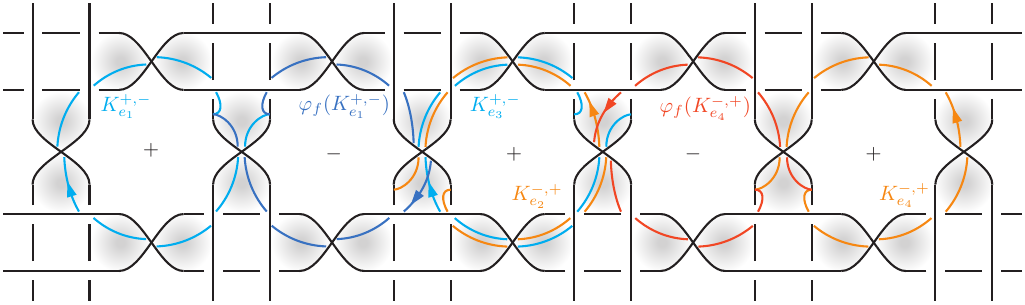}
\centering
\caption{Case (i) in \Cref{lem:good}.}
\label{fig:mon1}
\end{figure}

\begin{figure}[h]
\includegraphics[scale=0.8]{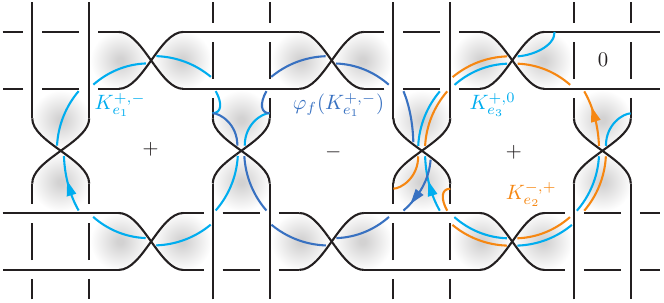}
\centering
\caption{Case (ii) in \Cref{lem:good}.}
\label{fig:mon2}
\end{figure}

\begin{figure}[h]
\includegraphics[scale=0.8]{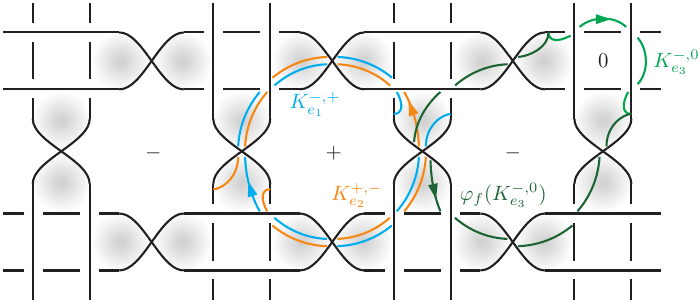}
\centering
\caption{Case (iii) in \Cref{lem:good}.}
\label{fig:mon3}
\end{figure}

\begin{prop} \label{prop:linear}
For any good path $\gamma_{v}$, the vanishing arcset $K_{v}$ is linear. 
Therefore, $K_{v}$ is a geometric vanishing arcset.

\begin{proof}
Let $\gamma_{v} = e_{1} \dots e_{m}$ such that $t(e_{m}) = v$ and $K_{v} = 
K^{|s(e_{1})|} \coprod_{i=1}^{m} K_{e_{i}}$. We already observed in 
\Cref{sec:basic} that any basic arc and its monodromy image intersect only at 
their boundary points.

To show the condition (ii), assume that a good path $\gamma_{v}$ is nonconstant 
(i.e., $K_{v}$ has more than one component). We mainly use \Cref{lem:good} to 
get intersection patterns between $\varphi_{f} (K_{e_{i}})$ and $K_{e_{j}}$ for 
any $1 \leq i \neq j \leq m$. Moreover, we need more intersection data between 
$K^{|s(e_{1})|}$, $K_{e_{1}}$, $K_{e_{2}}$ and their monodromy images. If $m 
\geq 2$, there are two cases: $|s(e_{1})| = -$, $|s(e_{2})| = +$ and 
$|t(e_{2})| = -$, or $|s(e_{1})| = +$, $|s(e_{2})| = -$ and $|t(e_{2})| = +$. 
In the first case, $\varphi_{f}(K_{e_{1}}^{-,+})$ intersects $K^{|s(e_{1})|=-}$ 
once and $\varphi_{f}(K_{e_{1}}^{-,+}) \bullet K^{|s(e_{1})|=-} =-1$. In 
addition, $\varphi_{f}(K^{|s(e_{1})|=-})$ intersects $K_{e_{2}}^{+,-}$ once and 
$\varphi_{f}(K^{|s(e_{1})|=-}) \bullet K_{e_{2}}^{+,-} =-1$. In the second 
case, $\varphi_{f}(K_{e_{2}}^{-,+})$ intersects $K^{|s(e_{1})|=+}$ once and 
$\varphi_{f}(K_{e_{1}}^{-,+}) \bullet K^{|s(e_{1})|=+} =-1$. Also, 
$\varphi_{f}(K^{|s(e_{1})|=+})$ intersects $K_{e_{1}}^{+,-}$ once and 
$\varphi_{f}(K^{|s(e_{1})|=+}) \bullet K_{e_{1}}^{+,-} =-1$. There are no other 
intersections in each case.

There are $8$ kinds of non-constant good paths according to 
$|s(\gamma)|$,$|s(e_{m})|$, and $|t(e_{m})=t(\gamma)|$. For each case, a linear 
order of components is determined by the sign $|s(\gamma)|$ and the number $m$.
\begin{itemize}
\item $|s(\gamma)| = +$, $|t(\gamma)| = +$ : $m$ is even. The linear order is 
given by
$$K_{e_{m}}^{-,+}, \dots, K_{e_{4}}^{-,+}, K_{e_{2}}^{-,+}, K^{|s(e_{1})|=+}, 
K_{e_{1}}^{+,-}, K_{e_{3}}^{+,-}, \dots, K_{e_{m-1}}^{+,-}.$$
\item $|s(\gamma)| = +$, $|s(e_{m})|=+$, $|t(\gamma)| = 0$ : $m$ is odd. The 
linear order is given by
$$K_{e_{m-1}}^{-,+}, \dots, K_{e_{4}}^{-,+}, K_{e_{2}}^{-,+}, K^{|s(e_{1})|=+}, 
K_{e_{1}}^{+,-}, K_{e_{3}}^{+,-}, \dots, K_{e_{m}}^{+,0}.$$
\item $|s(\gamma)| = +$, $|s(e_{m})|=-$, $|t(\gamma)| = 0$ : $m$ is even. The 
linear order is given by
$$K_{e_{m}}^{-,0}, \dots, K_{e_{4}}^{-,+}, K_{e_{2}}^{-,+}, K^{|s(e_{1})|=+}, 
K_{e_{1}}^{+,-}, K_{e_{3}}^{+,-}, \dots, K_{e_{m-1}}^{+,-}.$$
\item $|s(\gamma)| = +$, $|t(\gamma)| = -$ : $m$ is odd. The linear order is 
given by
$$K_{e_{m-1}}^{-,+}, \dots, K_{e_{4}}^{-,+}, K_{e_{2}}^{-,+}, K^{|s(e_{1})|=+}, 
K_{e_{1}}^{+,-}, K_{e_{3}}^{+,-}, \dots, K_{e_{m}}^{+,-}.$$
\item $|s(\gamma)| = -$, $|t(\gamma)| = +$ : $m$ is odd. The linear order is 
given by
$$K_{e_{m}}^{-,+}, \dots, K_{e_{3}}^{-,+}, K_{e_{1}}^{-,+}, K^{|s(e_{1})|=-}, 
K_{e_{2}}^{+,-}, K_{e_{4}}^{+,-}, \dots, K_{e_{m-1}}^{+,-}.$$
\item $|s(\gamma)| = -$, $|s(e_{m})|=+$, $|t(\gamma)| = 0$ : $m$ is even. The 
linear order is given by
$$K_{e_{m-1}}^{-,+}, \dots, K_{e_{3}}^{-,+}, K_{e_{1}}^{-,+}, K^{|s(e_{1})|=-}, 
K_{e_{2}}^{+,-}, K_{e_{4}}^{+,-}, \dots, K_{e_{m}}^{+,0}.$$
\item $|s(\gamma)| = -$, $|s(e_{m})|=-$, $|t(\gamma)| = 0$ : $m$ is odd. The 
linear order is given by
$$K_{e_{m}}^{-,0}, \dots, K_{e_{3}}^{-,+}, K_{e_{1}}^{-,+}, K^{|s(e_{1})|=-}, 
K_{e_{2}}^{+,-}, K_{e_{4}}^{+,-}, \dots, K_{e_{m-1}}^{+,-}.$$
\item $|s(\gamma)| = -$, $|t(\gamma)| = -$ : $m$ is even. The linear order is 
given by
$$K_{e_{m-1}}^{-,+}, \dots, K_{e_{3}}^{-,+}, K_{e_{1}}^{-,+}, K^{|s(e_{1})|=-}, 
K_{e_{2}}^{+,-}, K_{e_{4}}^{+,-}, \dots, K_{e_{m}}^{+,-}.$$
\end{itemize}
Using \Cref{lem:good}, one can show the linearity except $K^{|s(e_{1})|}$, 
$K_{e_{1}}$, and $K_{e_{2}}$. For example, we get that the monodromy image of 
$K_{e_{k}}^{-,+}$ intersects $K_{e_{k-2}}^{-,+}$ once (for any $k \geq 3 \mbox{ 
or }4$ according to the type of $\gamma$). Moreover, by definitions of depth 
and good path, it is enough to consider $K_{e_{k-1}}^{+,-}$ and 
$K_{e_{k+1}}^{+,-}$ for possible intersections. But, also by \Cref{lem:good}, 
they do not intersect the monodromy image of $K_{e_{k}}^{-,+}$. there is no 
other intersection.

In this way, we also have that the monodromy image of $K_{e_{k}}^{+,-}$ only 
intersects $K_{e_{k+2}}^{+,-}$ once, the monodromy image of $K_{e_{m}}^{-,0}$ 
only intersects $K_{e_{m-2}}^{-,+}$ once, and so on. The remaining linearity 
about $K^{|s(e_{1})|}$, $K_{e_{1}}$, and $K_{e_{2}}$ follows from the above 
discussion given in the proof.

Thus, any $K_{v}$ is a linear arcset and by \Cref{lem:linear}, $K_{v}$ is a 
geometric vanishing arcset.
\end{proof}
\end{prop}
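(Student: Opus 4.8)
The plan is to verify the two conditions in the definition of a linear arcset (Definition \ref{definition:linear}) for the components of $K_v$, listed in the eight case-by-case orderings above, and then to invoke Lemma \ref{lem:linear}. If $\gamma_v$ is the constant path at a depth-$0$ vertex, then $K_v$ is a single arc $K^{|v|}$ and the only thing to check is condition (i), which is exactly the fact, established in \Cref{subsec:depth0}, that $i(K^{|v|},\varphi_f(K^{|v|}))=0$. So one may assume $\gamma_v=e_1\cdots e_m$ is nonconstant, with $K_v=K^{|s(e_1)|}\coprod_{i=1}^m K_{e_i}$, and fix the linear order of its components appropriate to the type of $\gamma_v$.

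For condition (i): every component of $K_v$ is a basic arc $K^{+,0}$, $K^{+,-}$, $K^{0,-}$ (or an orientation reversal) or a depth-$0$ arc $K^{|v|}$, and the explicit descriptions of their monodromy images recorded after Lemma \ref{lem:+-} — see \Cref{fig:K+0mon}, \Cref{fig:K+-mon}, \Cref{fig:K0-mon} — show that in each case the arc and its monodromy image meet only at their common boundary points. This settles condition (i) componentwise.

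Condition (ii) is the substantive part. The global monodromy is the fixed product $\varphi_f=\tau_{V^-_1}\circ\cdots\circ\tau_{V^+_{n_+}}$, and each basic arc $K_{e_i}$ lives in a building block attached to a bounded region whose depth strictly increases along the good path $\gamma_v$ (Lemma \ref{lem:paths}(i)--(ii)). Hence the effect of $\varphi_f$ on $K_{e_i}$ is, up to isotopy, confined to the building blocks of the three consecutive edges $e_{i-1},e_i,e_{i+1}$ — precisely the configurations analysed in Lemma \ref{lem:good}. From that lemma one reads off that $\varphi_f(K^{-,+}_{e_k})$ meets only $K^{-,+}_{e_{k-2}}$, transversely once and with intersection number $-1$, that $\varphi_f(K^{+,-}_{e_k})$ meets only $K^{+,-}_{e_{k+2}}$, transversely once with intersection number $-1$, and analogously for the terminal pieces $\varphi_f(K^{-,0}_{e_m})$ and $\varphi_f(K^{+,0}_{e_m})$. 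Near the start of the path there remain a few pairs to check — the interactions of $K^{|s(e_1)|}$, $K_{e_1}$ and $K_{e_2}$ with each other's monodromy images — and these would be handled by two short direct computations, according to whether $|s(e_1)|$ is $+$ or $-$: $\varphi_f(K_{e_1})$ meets $K^{|s(e_1)|}$ exactly once, $\varphi_f(K^{|s(e_1)|})$ meets $K_{e_2}$ exactly once, and there are no further intersections. Reading these local facts off along the chosen order shows that the monodromy image of each component meets the next component in the order exactly once and meets no other component; that is condition (ii).

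The main obstacle is organizational rather than conceptual: one has to run through the eight types of good path and confirm that the interleaving pattern of the $K^{-,+}$'s and the $K^{+,-}$'s in each order really produces a single linear chain, with no accidental long-range intersections after applying $\varphi_f$. What makes this tractable is the depth monotonicity of good paths (Lemma \ref{lem:paths}): it localizes every monodromy computation to a window of at most three consecutive edges, so that Lemma \ref{lem:good} together with the two start-of-path computations genuinely suffice. Once linearity is established, Lemma \ref{lem:linear} gives at once that $K_v$ is a geometric vanishing arcset.
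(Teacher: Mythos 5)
Your proposal is correct and follows essentially the same route as the paper's proof: verify condition (i) from the componentwise analysis of basic arcs and their monodromy images, invoke Lemma~\ref{lem:good} together with the two start-of-path computations involving $K^{|s(e_1)|}$, $K_{e_1}$, $K_{e_2}$ to establish condition (ii) along the eight case-by-case orderings, and conclude via Lemma~\ref{lem:linear}. The only cosmetic difference is that you make the depth-monotonicity localization principle explicit as the organizing observation, which the paper leaves implicit in its appeal to Lemma~\ref{lem:good}.
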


\begin{thm} \label{thm:excep}
The adapted family $\overrightarrow{K}_{f}$ in \Cref{thm:adapted} is a 
topological exceptional collection.
\begin{proof}
There is one remaining assumption of \Cref{prop:excep} that all the arcs in the 
arcsets are disjoint.
For this, we need to translate arcs a little bit as follows.
First, for any vertex $w$ on a good path $\gamma_{v}$ for some vertex $v$, its 
good path $\gamma_{w}$ is the subpath of $\gamma_{v}$ by construction and then 
the arc $K_{w}$ is a subset of the arc $K_{v}$. Thus for the common edges,  the 
same basic arcs were chosen. But we can choose mutually disjoint copies of a 
given basic arc by, for example, `translating a little bit' the original basic 
arc (see \cref{fig:copies}) without affecting other intersection conditions. 
Hence, we can assume that any two arcs in $\overrightarrow{K}_{f}$ are disjoint.
Then, by \Cref{prop:linear} and \Cref{prop:excep}, $\overrightarrow{K}_{f}$ is 
a topological exceptional collection.
\end{proof}
\end{thm}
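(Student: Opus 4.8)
The plan is to verify the three hypotheses of \Cref{prop:excep} for the family $\overrightarrow{K}_{f}$ and then conclude. Two of them are already in hand: hypothesis (ii), that $\overrightarrow{K}_{f}$ is adapted to the distinguished collection $\overrightarrow{V}_{f}$, is exactly \Cref{thm:adapted}; and hypothesis (iii), that each $K_{v}$ is a geometric vanishing arcset, follows from \Cref{prop:linear} together with \Cref{lem:linear}. So the only thing left to establish is hypothesis (i): that the arcs occurring across all the arcsets $K_{v}$, as $v$ ranges over the vertices of $A\Gamma(\mathbb{D}_{f})$, can be taken pairwise disjoint.

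First I would pin down why this is not automatic. By the inductive construction in \Cref{definition:paths}, whenever a vertex $w$ lies on the good path $\gamma_{v}$ the path $\gamma_{w}$ is an initial segment of $\gamma_{v}$; since the arcset attached to a path is a disjoint union of one basic arc per edge, this forces $K_{w}$ to literally coincide with a sub-collection of $K_{v}$ on the shared edges. More generally, \Cref{lem:paths}(iii) says that two good paths either are disjoint or agree along a common bottom segment, and over such a segment both arcsets reuse the same basic arc for each shared edge. Thus, as constructed, distinct arcsets overlap rather than being disjoint.

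The remedy, which I expect to be the only genuinely delicate point (though still routine), is to replace each reused basic arc by a family of parallel pushoffs. For a fixed edge $e$, each time the basic arc $K_{e}$ is needed in some arcset I would use a distinct small translate of $K_{e}$ inside a tubular neighborhood, pushed off transversally and with its endpoints slid slightly along $\partial \Sigma_{f}$ (see \cref{fig:copies}); these translates can be made pairwise disjoint. The essential check is that this perturbation is harmless: a small pushoff of $K_{e}$ has the same algebraic intersection number with every vanishing cycle $V_{j}$ and with the monodromy image of every other basic arc, so the adaptedness computation of \Cref{thm:adapted} and the linearity analysis of \Cref{prop:linear}, which use only the intersection data recorded in \Cref{lem:+0-}, \Cref{lem:+-}, \Cref{lem:canceled}, and \Cref{lem:good}, carry over verbatim. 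With this choice, hypothesis (i) of \Cref{prop:excep} holds as well.

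Applying \Cref{prop:excep} then yields that $\overrightarrow{K}_{f}$ is a topological exceptional collection of geometric vanishing arcsets. Combined with \Cref{prop:key} and \Cref{prop:adaptedvar}, the variation images $V_{f}([\overline{K}_{v}])$ reproduce A'Campo's distinguished collection of vanishing cycles up to isotopy, which completes the proof of \Cref{thm:maina}.
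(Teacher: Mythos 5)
Your proof is correct and follows essentially the same route as the paper: you identify the three hypotheses of \Cref{prop:excep}, note that (ii) is exactly \Cref{thm:adapted} and (iii) comes from \Cref{prop:linear} together with \Cref{lem:linear}, and resolve (i) by replacing reused basic arcs on shared subpaths with pairwise disjoint parallel pushoffs, observing that this does not change any of the relevant intersection data. The only quibble is your closing sentence: it goes beyond \Cref{thm:excep}, and \Cref{prop:key} and \Cref{prop:adaptedvar} alone give equalities of \emph{homology classes}, not isotopy of curves -- the isotopy statement in \Cref{thm:maina} is obtained separately by the explicit surgery argument at the end of Section~\ref{sec:arcsetgeneral}.
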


\begin{figure}[h]
\includegraphics[scale=1]{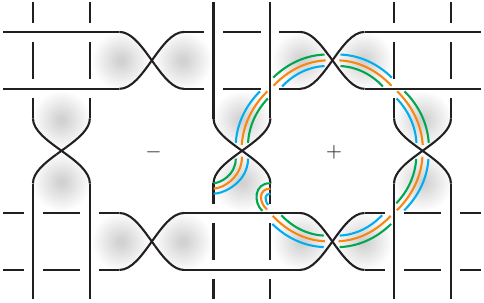}
\centering
\caption{Three disjoint copies of $K^{+,-}$.}
\label{fig:copies}
\end{figure}

\Cref{thm:excep} is the first part of \Cref{thm:maina}. Lastly, for any member 
of $\overrightarrow{K}_{f}$, we will show that its geometric variation image is 
isotopic to the corresponding vanishing cycle described by A'Campo. We perform 
the surgeries at intersection points following the linear order given in 
\Cref{prop:linear}.

Let us focus one specific example of a good path $\gamma = e_{1}e_{2}$ such 
that $|s(\gamma)| = -$, $|t(\gamma)| = -$. $K_{\gamma}$ consists of three basic 
arcs $K^{-}$, $K^{-,+}_{e_{1}}$, and $K^{+,-}_{e_{2}}$. Their linear order is 
given by $K^{-,+}_{e_{1}}$, $K^{-}$, $K^{+,-}_{e_{2}}$. In \cref{fig:isotopy1}, 
the geometric variation images of $K^{-,+}_{e_{1}}$ and $K^{-}$ are given. Then 
one can check that $\mathrm{sg}(\Var_{f}(K^{-,+}_{e_{1}}),\Var_{f}(K^{-}))$ is 
isotopic to the vanishing cycle representing $-[V^{+}]$ where $V^{+}$ 
corresponds to the $+$ vertex $s(e_{2})$ (see the red parts in 
\cref{fig:isotopy1}). Next, two curves 
$\mathrm{sg}(\Var_{f}(K^{-,+}_{e_{1}}),\Var_{f}(K^{-}))$ (after the isotopy) 
and $\Var_{f}(K^{+,-}_{e_{2}})$ are given in \cref{fig:isotopy2}. Similarly, 
its surgery is isotopic to the vanishing cycle $-[V^{-}] = V_{f}(K_{\gamma})$ 
corresponding to $t(\gamma)$. This shows that the geometric variation image 
$\Var_{f}(K_{\gamma})$ is isotopic to the vanishing cycle of A'Campo.

\begin{figure}[h]
\includegraphics[scale=1]{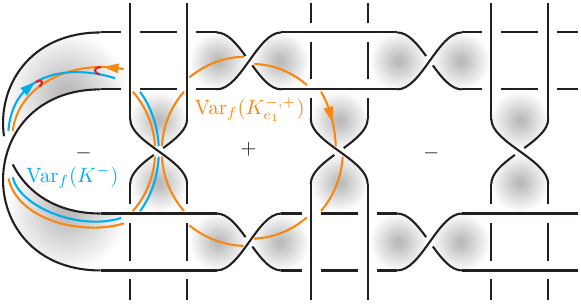}
\centering
\caption{Three curves $\Var_{f}(K^{-,+}_{e_{1}})$, $\Var_{f}(K^{-})$, and 
$\mathrm{sg}(\Var_{f}(K^{-,+}_{e_{1}}),\Var_{f}(K^{-}))$.}
\label{fig:isotopy1}
\end{figure}

\begin{figure}[h]
\includegraphics[scale=1]{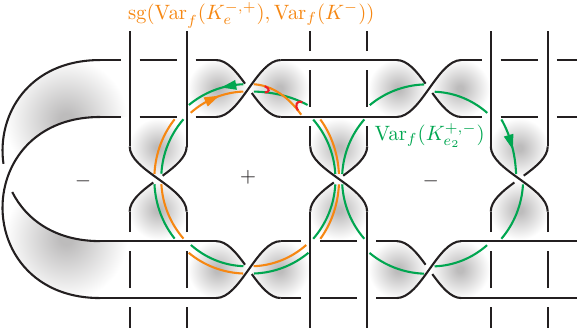}
\centering
\caption{Three curves $\mathrm{sg}(\Var_{f}(K^{-,+}_{e_{1}}),\Var_{f}(K^{-}))$, 
$\Var_{f}(K^{+,-}_{e_{2}})$, and $\Var_{f}(K_{\gamma})$.}
\label{fig:isotopy2}
\end{figure}

Any other case can be shown by repeating the above process because local 
situations regarding any surgeries are always the same (cf. \Cref{prop:key}). 
Roughly, surgeries before a component $K^{|s(e_{1})|}$ give a `long' simple 
closed curve and after that, surgeries make the curve to contract to the 
vanishing cycle corresponding to the vertex $t(\gamma)$. Hence, this proves the 
second part of \Cref{thm:maina}.

\bibliographystyle{alpha}
\bibliography{bibliography}

\end{document}